\begin{document}
\parskip=6pt

\theoremstyle{plain}
\newtheorem{prop}{Proposition}
\newtheorem{lem}[prop]{Lemma}
\newtheorem{thm}[prop]{Theorem}
\newtheorem{cor}[prop]{Corollary}
\newtheorem{defn}[prop]{Definition}
\theoremstyle{definition}
\newtheorem{example}[prop]{Example}
\theoremstyle{remark}
\newtheorem{remark}[prop]{Remark}
\numberwithin{prop}{section}
\numberwithin{equation}{section}

\newenvironment{rcases}
  {\left.\begin{aligned}}
  {\end{aligned}\right\rbrace}


\def\cal{\mathcal}
\newcommand{\cF}{\cal F}
\newcommand{\cG}{\cal G}
\newcommand{\cA}{\cal A}
\newcommand{\cB}{\cal B}
\newcommand{\cC}{\cal C}
\newcommand{\cO}{{\cal O}}
\newcommand{\cE}{{\cal E}}
\newcommand{\cU}{{\cal U}}
\newcommand{\cM}{{\cal M}}
\newcommand{\cD}{{\cal D}}
\newcommand{\cK}{{\cal K}}
\newcommand{\cZ}{{\cal Z}}
\newcommand{\cH}{{\cal H}}
\newcommand{\cL}{{\cal L}}
\newcommand{\cS}{{\cal S}}
\newcommand{\cW}{{\cal W}}

\newcommand{\fQ}{\frak{Q}}

\newcommand{\bC}{\mathbb C}
\newcommand{\bP}{\mathbb P}
\newcommand{\bN}{\mathbb N}
\newcommand{\bA}{\mathbb A}
\newcommand{\bR}{\mathbb R}
\newcommand{\bZ}{\mathbb Z}
\newcommand{\oP}{\overline P}
\newcommand{\oQ}{\overline Q}
\newcommand{\oR}{\overline R}
\newcommand{\oS}{\overline S}
\newcommand{\oc}{\overline c}
\newcommand{\bp}{\mathbb p}
\newcommand{\oD}{\overline D}
\newcommand{\oE}{\overline E}
\newcommand{\oC}{\overline C}
\newcommand{\of}{\overline f}
\newcommand{\ou}{\overline u}
\newcommand{\oU}{\overline U}
\newcommand{\oj}{\overline j}
\newcommand{\oV}{\overline V}
\newcommand{\ov}{\overline v}
\newcommand{\ow}{\overline w}
\newcommand{\oy}{\overline y}
\newcommand{\oz}{\overline z}
\newcommand{\pa}{\partial}
\newcommand{\op}{\overline \partial}
\newcommand{\ochi}{\overline \chi}

\newcommand{\hg}{\hat G}
\newcommand{\hM}{\hat M}

\newcommand{\tpr}{\widetilde {\text{pr}}}
\newcommand{\tB}{\widetilde B}
\newcommand{\tx}{\widetilde x}
\newcommand{\ty}{\widetilde y}
\newcommand{\txi}{\widetilde \xi}
\newcommand{\teta}{\widetilde \eta}
\newcommand{\tna}{\widetilde \nabla}
\newcommand{\tth}{\widetilde \theta}
\newcommand{\tva}{\widetilde \varphi}

\newcommand{\diml}{\text{dim}}
\newcommand{\var}{\varepsilon}
\newcommand{\End}{\text{End }}
\newcommand{\loc}{\text{loc}}
\newcommand{\Symp}{\text{Symp}}
\newcommand{\Sympo}{\text{Symp}(\omega)}
\newcommand{\lam}{\lambda}
\newcommand{\Hom}{\text{Hom}}
\newcommand{\Ham}{{\rm{Ham}}}
\newcommand{\ham}{\text{ham}}
\newcommand{\Ker}{\text{Ker}}
\newcommand{\dist}{\text{dist}}
\newcommand{\psl}{\rm{PSL}}
\newcommand{\rk}{\roman{rk }}
\newcommand{\id}{\text{id}}
\newcommand{\psh}{\text{PSH}}
\newcommand{\Det}{\text{Det}\,}
\newcommand{\re}{\text{Re}\,}
\renewcommand\qed{ }
\begin{titlepage}
\title{\bf To the geometry of spaces of plurisubharmonic functions on a K\"ahler manifold}
\author{L\'aszl\'o Lempert \thanks{Research partially  supported by NSF grant DMS 1764167.
\newline 2020 Mathematics Subject classification 32Q15, 32U15, 53C35, 58B20, 58E30, 70H99}\\ 
Department of  Mathematics\\
Purdue University\\West Lafayette, IN
47907-2067, USA}
\thispagestyle{empty}
\end{titlepage}
\date{}
\maketitle
\abstract
Consider a compact K\"ahler manifold $(X,\omega)$ and the space $\cE(X,\omega)=\cE$ of $\omega$--plurisubharmonic functions of full Monge--Amp\`ere mass on it. We introduce a quantity $\rho[u,v]$ to measure the distance between 
$u, v\in\cE$; $\rho[u,v]$ is not a number but rather a decreasing function on a certain interval $(0,V)\subset\bR$. We explore properties of $\rho[u,v]$, and using them we study Lagrangians and associated energy spaces of
$\omega$--plurisubharmonic functions.  Many results here generalize Darvas's findings about 
his metrics $d_\chi$.
\endabstract

\section{Introduction}    

Consider an $n$ dimensional connected compact K\"ahler manifold $(X,\omega)$ and the space $\text{PSH}(X,\omega)$ of $\omega$--plurisubharmonic functions $X\to [-\infty,\infty)$. (Its definition, as well as other background material will 
be reviewed in section 2.) Over thirty years now this space and its subspaces have been studied by endowing them with various 
metrics, and introducing special classes of paths, (weak) geodesics, in them. In this paper we focus on the space 
$\cE(X,\omega)=\cE\subset \psh(X,\omega)$ of functions of full Monge--Amp\`ere mass, and study a quantity $\rho[u,v]$, that we call the rise between $u,v\in\cE$, a notion that is related to the metrics, but is more fundamental. To define it we have to review the idea of geodesics in $\cE$ and some of their properties.

Given real numbers $a<b$, consider the strip
\[
S_{ab}=\{s\in \bC: a<\re s <b\}
\]
and let $\pi: S_{ab}\times X\to X$ be the projection. Following Berndtsson and Darvas \cite{Be1}, \cite[section 3.3]{Da3} we make the following definition.

\begin{defn}    
(a) A path (i.e., a map) $\varphi :(a,b)\to \psh(X,\omega)$ is a subgeodesic if $\Phi : S_{ab}\times X\to [-\infty,\infty)$ defined by $\Phi(s,x)=\varphi(\re s)(x)$ is $\pi^\ast\omega$--plurisubharmonic. 

(b) Given $u,v\in\psh (X,\omega)$, the geodesic $\psi :(a,b)\to \psh (X,\omega)$ joining them is 
\begin{equation}   
\psi=\sup\{\varphi \mid \varphi:(a,b)\to\psh (X,\omega)\,\text{is subgeodesic, } \lim_a\varphi \le u, \,\lim_b\varphi\le v\},
\end{equation}   
provided the supremum is not identically $-\infty$.
\end{defn}
In (1.1) the limits are understood pointwise on $X$; they exist because $\pi^\ast\omega$-plurisub-harmonicity of the associated function $\Phi$ implies that $\varphi(\cdot)(x)$ is convex (or $\equiv -\infty$) for all $x\in X$. We will only be interested in geodesics when $u, v\in \cE$. In this case the connecting geodesic $\psi$ is also a subgeodesic, maps into $\cE$ and 
$\lim_a \psi=u$, $\lim_b \psi=v$ in $L^1(X)$ and in capacity (and if $u,v$ are bounded, the limits are uniform) \cite[Corollaries 5.3, 5.4, and (23)]{Da2}, \cite[pp.156--157]{Be2}. Accordingly, it is natural to define $\psi(a)=u$, $\psi(b)=v$, and refer to the function $\psi:[a,b]\to \cE$ thus extended as a geodesic.

Traditionally, what we call geodesics here are termed weak (or plurisubharmonic or maximal) 
geodesics, and the name geodesic is reserved for $\psi$ in (1.1) that define a smooth path into the space $\cH$ of K\"ahler potentials,
\begin{equation}   
\cH=\{u\in C^\infty(X) : \omega+i\pa\op u >0\}\subset \psh(X,\omega).
\end{equation}
Such $\psi$ have velocity $\dot\psi:[a,b]\to T\cH$ that is parallel for a certain connection $\nabla$ on the tangent bundle $T\cH$ (Mabuchi's connection \cite{M}). However, since what tradition calls weak geodesics turned out to be rather more important than smooth 
solutions of the geodesic equation $\nabla_{\dot\psi} {\dot\psi}=0$, they earned the right to a short name, geodesic, and this is the name we will be using.  

Over the years geodesics in various subspaces of $\cE$ have been found to be subject to conservation laws 
\cite[Corollary 3.19]{S}, \cite[Proposition 2.2]{Be2}, \cite{Da1}, \cite[Theorem 4.4]{DNL}. The conserved 
quantities can be expressed in terms of the velocity of the geodesic and its decreasing rearrangement. As said, if $\psi:[a,b]\to\cE$ is a geodesic and $x\in X$, the function 
$\psi(\cdot)(x)$ is convex on $(a,b)$, and is either everywhere finite or $\equiv -\infty$. In the former case it has left and right derivatives, that we denote $\partial^-_t\psi(t)(x), \partial^+_t\psi(t)(x)$ or 
$\partial^{\pm}\psi$. We define the derivatives $\partial^{+}_t\psi(a)(x),\partial^{-}_t\psi(b)(x)\in[-\infty, \infty]$ 
as limits of $\partial^{\pm}_t\psi(t)(x)$ at $t=a,b$.
If $\psi(\cdot)(x)\equiv -\infty$, we define $\partial^\pm_t\psi(t)(x)=0$, but this convention will be of little importance. Both 
$\partial^\pm_t\psi(t): X\to [-\infty, \infty]$ are Borel functions; these are the left and right velocities of $\psi$. For $u\in\cE$ we denote by $\mu_u$ the Borel measure on $X$ induced by the non--pluripolar product $(\omega +i\partial\op u)^n$; the full mass condition means that 
$\mu_u(X)=\mu_0(X)=\int_X \omega^n$. We will abbreviate
\[
\mu_0(X)=V.
\]

If $a\le t<b$, resp. $a<t\le b$, we view $\partial^+_t \psi(t), \partial^-_t \psi(t)$ as functions on the measure space 
$(X,\mu_{\psi(t)})$, and form their decreasing rearrangements
\[
(\partial^+_t \psi(t))^\star, \ (\partial^-_t \psi(t))^\star : (0, V)\to \bR.
\]
It follows from \cite[Lemma 4.10]{Da1} that for geodesics $\psi$ that map into
\begin{equation}   
\cH^{1\bar 1}=\{u\in \psh(X,\omega): \text{the current } \partial\op u \text{ is represented by a bounded form}\},
\end{equation}
these rearrangements are conserved: $(\partial^-_t \psi(t))^\star$ and $(\partial^+_s \psi(s))^\star$ agree if $a<t\le b$, 
$a\le s<b$. (In Darvas's lemma $\dot u_t$ should be interpreted as right or left
derivative.) At the same time, Darvas 
\cite[section 7]{Da2} argues that for geodesics in the somewhat larger space of Lipschitz continuous functions, right and left derivatives at the endpoints $a, b$ may have different rearrangements; and \cite[Example 5.4]{L2} features a geodesic 
$\psi$ in the same space for which $(\partial^-_t \psi(t))^\star\ne (\partial^+_t \psi(t))^\star$ for all $t$ (see 
\cite[(5.5) and the following discussion]{L2}).

Nevertheless, it is possible to generalize the conservation law for geodesics in $\cH^{1\bar 1}$ to geodesics in $\cE$. This is based on the following: 
\begin{thm}[See Theorem 5.7]       
 Consider geodesics $\psi:[a,b]\to\cE$ and $\psi_j:[a,b]\to \cH^{1\bar 1}$. If $\psi_j(t)$ decreases to $\psi(t)$ for all $t\in[a,b]$, then there is a decreasing upper semicontinuous $(usc)$ function $\rho:(0,V)\to\bR$ to which the decreasing rearrangements $(\partial^-_t \psi_j(t))^\star$ and  $(\partial^+_t \psi_j(t))^\star$ converge Lebesgue almost everywhere as $j\to\infty$, for all $t\in (a,b)$. This $\rho$ depends only on $\psi$ and not on the choice of $\psi_j$.
\end{thm}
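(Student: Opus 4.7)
The plan is to reduce convergence of the rearrangements to weak convergence of a sequence of measures on $\bR$, and then to establish that weak convergence by rewriting the relevant integrals over $X$ as integrals on the product $S_{ab}\times X$, where Monge--Amp\`ere continuity along decreasing sequences becomes applicable.

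First I would fix notation. The conservation law cited immediately before the theorem gives, for each $j$, a single decreasing function $\rho_j:(0,V)\to\bR$ agreeing with $(\partial^-_t\psi_j(t))^\star$ for $t\in(a,b]$ and with $(\partial^+_s\psi_j(s))^\star$ for $s\in[a,b)$. I would realize $\rho_j$ as the decreasing rearrangement of the pushforward measure
\[
\nu_j:=(\partial^+_t\psi_j(t))_\ast\mu_{\psi_j(t)},
\]
which has total mass $V$ and, by conservation, is independent of $t\in(a,b)$. If $\nu_j$ converges weakly on $\bR$ to some finite measure $\nu$, then $\rho_j$ converges to the decreasing rearrangement $\rho$ of $\nu$ at every continuity point of $\rho$, hence Lebesgue almost everywhere on $(0,V)$; this is a standard consequence of weak convergence of measures on the line. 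Upper semicontinuity of $\rho$ can then be arranged by passing to its usc regularization, which differs from the a.e.\ pointwise limit on at most a countable set.

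To prove weak convergence of $\nu_j$, I would use conservation to average in $t$: for bounded continuous $F$,
\[
\int_\bR F\,d\nu_j=\frac{1}{b-a}\int_a^b\int_X F(\partial^+_t\psi_j(t))\,d\mu_{\psi_j(t)}\,dt.
\]
Setting $\Phi_j(s,x)=\psi_j(\re s)(x)$, this recasts the right-hand side as an integral on a unit-thickness slab inside $S_{ab}\times X$ against the non-pluripolar measure $(\pi^\ast\omega+i\pa\op\Phi_j)^n\wedge\frac{i}{2}\,ds\wedge d\bar s$. Since $\Phi_j$ is $\pi^\ast\omega$-plurisubharmonic and decreases pointwise to $\Phi$, I would combine two ingredients: (i) weak continuity of non-pluripolar Monge--Amp\`ere along decreasing sequences in $\cE$, which handles the limit of the measure; (ii) convexity of $\Phi_j(\cdot,x)$ in $t=\re s$, which forces $\partial^\pm_t\Phi_j(t,x)\to\partial_t\Phi(t,x)$ at every $(t,x)$ for which $\Phi(\cdot,x)$ is differentiable at $t$, a set of full measure. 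Tightness of $\nu_j$ follows from uniform moment bounds, the first moment being controlled by $(E(\psi_j(b))-E(\psi_j(a)))/(b-a)$ via the standard energy-derivative identity. The limit $\int F\,d\nu$ so produced depends only on $\Phi$, hence only on $\psi$, yielding both the convergence $\rho_j\to\rho$ and the independence of $\rho$ from the choice of approximating sequence.

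The principal obstacle is the simultaneous, non-monotone variation of the functions $\partial^+_t\psi_j(t)$ and of the measures $\mu_{\psi_j(t)}$: a pointwise decreasing family of convex-in-$t$ functions has no useful monotonicity of its slopes, so a one-step monotone or dominated convergence argument is unavailable. The technical crux will be to couple the plurisubharmonic structure on $S_{ab}\times X$ with the convexity in $t$, perhaps by sandwiching $\partial^+_t\Phi_j$ between explicit subgeodesic barriers such as the affine path $t\mapsto\psi_j(a)+\frac{t-a}{b-a}(\psi_j(b)-\psi_j(a))$, to justify the interchange of the limit $j\to\infty$ with the integrations in $t$ and over $X$.
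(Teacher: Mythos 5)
Your reduction of the theorem to weak convergence of the pushforward measures $\nu_j=(\partial^+_t\psi_j(t))_\ast\mu_{\psi_j(t)}$ is a reasonable framing, but the two analytic inputs you rely on do not hold at the generality required, and the step you yourself flag as the crux is precisely where the argument breaks. First, tightness: your uniform first-moment bound via $(E(\psi_j(b))-E(\psi_j(a)))/(b-a)$ degenerates when $\psi(a)$ or $\psi(b)$ lies in $\cE\setminus\cE^1$, which the theorem allows; the energies $E(\psi_j(a))$ then decrease to $-\infty$ and $\nu_j$ has no uniformly bounded first moment (the limit $\rho$ need not be integrable). The paper replaces this by choosing, via \cite[Proposition 10.16]{GZ2}, a concave weight $\chi$ adapted to the two endpoints so that Darvas's convergence of $d_\chi(\psi_j(a),\psi_j(b))$ holds, and extracts the pointwise bound $\chi\bigl(\rho_j(\lambda)\bigr)\le d_\chi(\psi_j(a),\psi_j(b))/\lambda$; some such endpoint-adapted device is indispensable. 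Second, the limit interchange: weak convergence of the non-pluripolar measures along a decreasing sequence, combined with a.e.\ convergence of the discontinuous integrands $\partial^+_t\Phi_j$, does not give convergence of $\int F(\partial^+_t\Phi_j)\,d\Theta_j$ when both the function and the measure vary --- the measures can concentrate exactly where the integrands have not yet converged. Worse, the pointwise convergence itself is unavailable: for fixed $t$ it requires $\partial^-_t\psi(t)=\partial^+_t\psi(t)$ $\mu_{\psi(t)}$-a.e.\ for the \emph{limit} geodesic, which is exactly what fails for general geodesics in $\cE$ (the paper cites \cite[Example 5.4]{L2}, a Lipschitz geodesic with $(\partial^-_t\psi(t))^\star\ne(\partial^+_t\psi(t))^\star$ for every $t$); and averaging in $t$ does not rescue it, because the slab measure $(\pi^\ast\omega+i\pa\op\Phi)^n\wedge\tfrac i2\,ds\wedge d\bar s$ is not a product, so you cannot apply Fubini in the $t$-direction to exploit the countability of the non-differentiability set of $\Phi(\cdot,x)$. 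The paper performs a limit interchange of this type only in Lemma 3.4, where the limit geodesic stays in $\cH^{1\bar 1}$, so that both the conservation law and the hereditary tightness of the measures $\mu_{\varphi_j(t)}$ (from He's estimates) are available; neither holds here.

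The paper's actual route avoids velocities of the limit geodesic altogether. It works only with the endpoint pair $(u,v)=(\psi(a),\psi(b))$: Helly's selection theorem applied to the decreasing functions $\rho[u_j,v_j]$ (made pointwise bounded by the $d_\chi$ bound above) produces subsequential limits, and uniqueness is obtained by a separating-family argument --- for every admissible perturbed weight $\chi+\varepsilon f$ the integral $\int_0^V(\chi+\varepsilon f)\circ r$ equals the sequence-independent quantity $d_{\chi+\varepsilon f}(u,v)$, forcing any two subsequential limits to be equidistributed, hence equal as decreasing usc functions; the general case is reduced to $u_j\le v_j$ via the envelope $u_j\wedge v_j$ and the Pythagorean identity of Lemma 4.1. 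If you want to salvage your approach, you would at minimum need to (i) replace the energy bound by an endpoint-adapted tightness estimate and (ii) supply a genuine substitute for the conservation law on the limit geodesic; as it stands the proposal has a gap at its central step.
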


Note that for monotone functions a.e. convergence is the same as convergence at each point of continuity of the limit function, and the same as convergence on a dense set. Furthermore, any geodesic $\psi:[a,b]\to\cE$ can be obtained as the decreasing limit of geodesics $\psi_j:[a,b]\to \cH^{1\bar 1}$. This is so because one can choose $u_j$, $v_j\in\cH$ that decrease to $\psi(a)$, resp.  $\psi(b)$ (the customary references are \cite{De, DP} and especially \cite{BK}); the geodesics $\psi_j:[a,b]\to\cE$ joining $u_j,v_j$ map in fact into $\cH^{1\bar 1}$ by the work of Chen, with complements by B{\l}ocki \cite{Bl, C}; and $\psi_j$ will decrease to $\psi$ by \cite[Proposition 3.15]{Da3}.

\begin{defn}   
We call $\rho=\rho_\psi:(0,V)\to\bR$ of Theorem 1.2 the rise of the geodesic $\psi$.
\end{defn}

At first sight, $\rho_\psi$ has nothing to do with conserved quantities; but it is conserved in the sense that for $[c,d]\subset [a,b]$, the rise of the geodesic $\psi |[c,d]$ is independent of the choice of $c,d$. This is obvious from the way $\rho$ is obtained in Theorem 1.2.

Once we have the notion of the rise of a geodesic, we can also talk about the rise $\rho[u,v]$ between $u,v\in\cE$: it is the rise of the geodesic $\psi:[0,1]\to \cE$ joining $u,v$,
\begin{equation}
\rho[u,v]=\rho_\psi.
\end{equation}
This is a distance--like quantity, except it measures distance between $u,v$ not by a number, but by 
a decreasing function $(0,V)\to \bR$.

The notion of the rise is useful because it contains metric information. The distance between $u,v$ measured in any of the 
Orlicz--type metrics $d_\chi$ that Darvas studies in [Da1--3], and the length of a geodesic measured in $d_\chi$, are easily 
recovered from the rise, see Definition 10.4 and (10.4). For example, 
$L^p$ distance $d_p(u,v)$ is just the $L^p$ norm of $\rho[u,v]$, and 
similarly for $d_\chi$ and the more general notion of action $\cL_T$ of \cite{L2}, associated with a Lagrangian. (The 
simplest instance of action is the increment  of
Monge--Amp\`ere energy between $u$ and $v$: it is $\int_0^V\rho[u,v]$, if $u,v$
belong to the energy space $\cE^1\subset\cE$, that we will introduce later.) At the same time, 
properties of the rise can be formulated independently of any choice of weight function $\chi$ or Lagrangian $L$. It is in this sense that the rise is more fundamental than the metrics.

The aim of this paper is to explore properties of the rise, of geodesics and between points in $\cE$; and to use them to
study Lagrangians on spaces of $\omega$--plurisubharmonic functions. The reader will notice that many of our results 
correspond to Darvas's results on $d_\chi$ in [Da 1--3]. Even if the results here appear to be more general, often our proofs can 
simply be extracted from Darvas's proofs. 

Contents. After reviewing background  material in section 2, we dicuss rise in $\cH^{1\bar 1}$ and its relation with envelopes of functions in sections 3, 4. Section 5 defines the rise in $\cE$ and formulates its main properties. In section 6 we show how, using decreasing rearrangements, one can compare velocities of geodesics with velocities of rectilinear paths. Section 7 is an admission that even if we can work with the notion of rise, its true meaning escapes us. The last three sections bring back Lagrangians to study metric properties of certain subspaces of $\cE$, show how metric notions can be directly reduced to the notion of rise, and formulate a Principle of Least Action, generalizing \cite[Theorem 8.1]{L2}.  

\section{Background}  

If $Y$ is a complex manifold and $\Omega$ a real $(1, 1)$ form on it, $d\Omega=0$, a function $u:Y\to [-\infty, \infty)$ is said to be $\omega$--plurisubharmonic if for every open set $U\subset Y$ on which $\Omega$ can be written as $i\partial\op f$, the function $u+f$ is plurisubharmonic on $U$. We write $\psh(Y,\Omega)$ for the set of $\Omega$--plurisubharmonic functions. Most of the time we will be interested in $\omega$--plurisubharmonic functions on a connected compact K\"ahler manifold $(X,\omega)$ of dimension $n$. With any $u\in\psh(X,\omega)$ one can associate a Borel measure $\mu_u$ on $X$, induced by $(\omega+i\partial\op u)^n$, see [GZ1--2]. We let $V=\int_X\omega^n=\mu_0(X)$, and define the space $\cE\subset \psh(X,\omega)$ consisting of those $u\in\psh(X,\omega)$ for which
\begin{equation}   
\mu_u(X)=V.
\end{equation}
By $\cE^\infty$ we denote the space of bounded functions in $\psh(X,\omega)$; they are all contained in $\cE$. In (1.2), (1.3) we have already introduced the subspaces $\cH\subset \cH^{1\bar 1}\subset\cE^\infty$. He, Berndtsson, and Darvas proved that geodescics between points in $\cH^{1\bar 1}, \cE^\infty$, resp. $\cE$ stay entirely in those spaces \cite{H}, 
\cite[Section 2.2]{Be1}, \cite[Corollary 5.4]{Da2}. Not much is known about the regularity of these geodesics, but by work of Chen and B{\l}ocki, geodesics with endpoints in $\cH$ are $C^1$ as maps into $C(X)$, \cite{Bl, C}.

Next we turn to rearrangements. If $(X,\mu)$ and $(Y,\nu)$ are measure spaces, almost everywhere defined measurable functions 
$\xi:X\to [-\infty,\infty]$ and $\eta: Y\to [-\infty,\infty]$ are said to be equidistributed, or strict rearrangements of one another, if $\mu(\xi^{-1} B)=\nu(\eta^{-1}B)$ for every Borel set $B\subset [-\infty,\infty]$. If $\mu(X)=\nu(Y)<\infty$, this is equivalent to $\mu(\xi>t)=\nu(\eta> t)$ for all $t\in\bR$. Our notation for equidistribution will be
\begin{equation}    
(\xi,\mu)\sim(\eta,\nu),\quad \rm{or }\quad \xi\sim \eta,
\end{equation}
if the measures are understood. The decreasing rearrangement of $\xi$ is the decreasing usc function 
$\xi^\star : (0, \mu(X))\to [-\infty,\infty]$ that is equidistributed with $\xi$, when $(0,\mu(X))$ is endowed with Lebesgue measure. Thus $\mu(s\le\xi\le t)$ is the length of the longest interval on which $s\le\xi^\star\le t$, $s,t\in [-\infty,\infty]$. If 
$\xi$ is $\mu$--almost everywhere finite, then $\xi^\star$ is finite everywhere. We have (see for example \cite[(3.2), (3.3)]{L2})
\begin{gather}   
\mu\big(\xi>\xi^\star(s)\big)\le s\le\mu\big(\xi\ge\xi^\star(s)\big)\qquad \text{and}\\
\mu(\xi\ge \tau)< s \, \text{ implies }\,  \xi^\star(s)<\tau,\qquad \mu(\xi>\tau)>s \,\text{ implies }\, \xi^{\star}(s)>\tau,
\end{gather}
because, e.g., the Lebesgue measure of $\big(\xi^\star >\xi^\star(s)\big)\subset (0, s)$ equals 
$\mu\big(\xi >\xi^\star (s)\big)$.

\begin{lem}    
Suppose $\mu(X)<\infty$ and a.e. defined measurable functions $\xi_j:X\to [-\infty,\infty]$ converge a.e. to $\xi:X\to\bR$. 
If $\xi^\star$ is continuous at some $s\in\big(0, \mu(X)\big)$, then $\lim_{j\to\infty} \xi^\star_j(s)=\xi^\star(s)$.
\end{lem}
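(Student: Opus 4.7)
The plan is to reduce the convergence of the rearrangements at $s$ to convergence of the distribution functions $F_\xi(t) := \mu(\xi > t)$. The auxiliary observation I would establish first is that whenever $\mu(\{\xi = t\}) = 0$ one has $F_{\xi_j}(t) \to F_\xi(t)$: indeed, at every $x$ with $\xi(x) \ne t$, $\mathbf{1}_{\{\xi_j > t\}}(x) \to \mathbf{1}_{\{\xi > t\}}(x)$, so since $\mu(X) < \infty$, dominated convergence finishes the claim.

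Next I would translate continuity of $\xi^\star$ at $s$ into strict inequalities for $F_\xi$ near $\tau := \xi^\star(s)$. For any $\eta > 0$, continuity gives $s^- < s < s^+$ with $\xi^\star(s^+) > \tau - \eta$ and $\xi^\star(s^-) < \tau + \eta$. Feeding these into (2.3) at $s^\pm$ yields $F_\xi(\tau - \eta) \ge \mu(\xi \ge \xi^\star(s^+)) \ge s^+ > s$ and $F_\xi(\tau + \eta) \le \mu(\xi > \xi^\star(s^-)) \le s^- < s$. Now at most countably many $t$ satisfy $\mu(\{\xi = t\}) > 0$, so I can pick $\tau^- \in (\tau - \eta, \tau - \eta/2)$ and $\tau^+ \in (\tau + \eta/2, \tau + \eta)$ with $\mu(\{\xi = \tau^\pm\}) = 0$; monotonicity of $F_\xi$ together with the displayed inequalities (applied with $\eta/2$) then gives $F_\xi(\tau^-) > s > F_\xi(\tau^+)$.

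The auxiliary observation applied at $\tau^\pm$ then yields $F_{\xi_j}(\tau^-) > s > F_{\xi_j}(\tau^+)$ for all large $j$. Applying (2.4) to the left inequality gives $\xi_j^\star(s) > \tau^-$, while the right inequality forces $\xi_j^\star(s) \le \tau^+$ (if not, (2.3) for $\xi_j$ at $s$ would give $\mu(\xi_j > \tau^+) \ge \mu(\xi_j \ge \xi_j^\star(s)) \ge s$, a contradiction). Thus $\tau - \eta < \xi_j^\star(s) < \tau + \eta$ for $j$ large, proving the lemma.

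The main delicate point is the careful accounting of strict versus non-strict inequalities: because $F_\xi$ can jump at $\tau$, one only has $F_\xi(\tau) \le s \le F_\xi(\tau -)$ at $\tau$ itself, and the continuity of $\xi^\star$ at $s$ is needed precisely to produce the strict separation $F_\xi(\tau^-) > s > F_\xi(\tau^+)$ at points $\tau^\pm$ slightly away from $\tau$ where $F_\xi$ is also continuous, so that the auxiliary step can be applied. Everything else is bookkeeping with monotone functions.
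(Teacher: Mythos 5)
Your proof is correct and follows essentially the same route as the paper: both arguments reduce the claim to convergence of the measures of super-level sets $\mu(\xi_j>t)$ and then translate back and forth via (2.3), (2.4), using continuity of $\xi^\star$ at $s$ only in the final squeeze. The only difference is cosmetic --- you obtain two-sided convergence of the distribution function at thresholds $\tau^{\pm}$ chosen to be non-atoms of $\xi_{*}\mu$ via dominated convergence, whereas the paper perturbs the thresholds by $\delta$ and uses only the one-sided lower-semicontinuity estimate $\mu(\xi_j\in G)>\mu(\xi\in G)-\delta$ for open $G$, which avoids having to select non-atoms.
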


\begin{proof}
We can assume $\xi_j\to\xi$ everywhere. If $G\subset [-\infty,\infty]$ is open, then 
$(\xi\in G)\subset\bigcup_{i\ge 1} \bigcap_{j\ge i}(\xi_j\in G)$. Therefore, given $\delta>0$, for sufficiently large $j$
\[
\mu(\xi_j\in G)>\mu(\xi\in G)-\delta \quad\text{and}\quad \mu(\xi_j\not\in G)<\mu(\xi\not\in G)+\delta.
\]
Let $0<s<\mu(X)$ and apply these estimates with $G=(\xi^\star(s+\delta)-\delta,\infty]$, resp. 
$G=[-\infty, \xi^\star(s-\delta)+\delta)$, in conjunction with (2.3) to obtain
\begin{align*}
&\mu\big(\xi_j>\xi^\star(s+\delta)-\delta\big)>\mu\big(\xi\ge \xi^\star(s+\delta)\big)-\delta\ge s,\\
&\mu\big(\xi_j\ge\xi^\star(s-\delta)+\delta\big)<\mu\big(\xi> \xi^\star(s-\delta)\big)+\delta\le s.
\end{align*}
(2.4) now implies $\xi^\star(s+\delta)-\delta<\xi^\star_j(s)<\xi^\star(s-\delta)+\delta$ for sufficiently large $j$. If $\xi^\star$ is continuous at $s$, then $\xi^\star_j(s)\to \xi^\star(s)$ follows by letting $\delta\to 0$.
\end{proof}

\begin{lem}  
Suppose $\mu(X)<\infty$ and $\xi,\eta:X\to\bR$ are a.e. defined measurable functions. If $F:\bR^2\to\bR$ is convex, and increasing
in both variables, then
\begin{equation} 
F(\xi,\eta)^\star(s)\le F\big(\xi^\star(\sigma),\eta^\star(s-\sigma)\big),\qquad 0<\sigma<s<\mu(X). 
\end{equation}
\end{lem}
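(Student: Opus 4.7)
The plan is to use the characterization of $\xi^\star$ through the distribution function, contained in (2.3) and (2.4), and reduce the inequality to a simple set-theoretic statement that uses only the fact that $F$ is increasing in both variables (convexity is not actually needed for the argument I have in mind).

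Set $a=\xi^\star(\sigma)$ and $b=\eta^\star(s-\sigma)$; I want to conclude $F(\xi,\eta)^\star(s)\le F(a,b)$. By the contrapositive of the second implication in (2.4), it suffices to show
\[
\mu\big(F(\xi,\eta)>F(a,b)\big)\le s.
\]
The key observation is the set inclusion
\[
\{F(\xi,\eta)>F(a,b)\}\subset \{\xi>a\}\cup\{\eta>b\},
\]
which is immediate from $F$ being increasing in both arguments: if $\xi\le a$ and $\eta\le b$ then $F(\xi,\eta)\le F(a,b)$.

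From (2.3) applied to $\xi$ with parameter $\sigma$, and to $\eta$ with parameter $s-\sigma$, one gets $\mu(\xi>a)\le\sigma$ and $\mu(\eta>b)\le s-\sigma$. Adding these and invoking the inclusion above yields
\[
\mu\big(F(\xi,\eta)>F(a,b)\big)\le \mu(\xi>a)+\mu(\eta>b)\le \sigma+(s-\sigma)=s,
\]
which finishes the proof.

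I do not foresee a real obstacle — the only care required is to track strict versus non-strict inequalities when invoking (2.3) and (2.4). In particular, (2.4) as stated delivers the non-strict conclusion $\xi^\star(s)\le\tau$ precisely from $\mu(\xi>\tau)\le s$ (via contrapositive), which matches what the argument above produces. It is worth remarking that convexity of $F$ is not used; pointwise monotonicity in each variable is enough for (2.6).
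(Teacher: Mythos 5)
Your strategy --- bounding the distribution function of $F(\xi,\eta)$ via the inclusion $\{F(\xi,\eta)>F(a,b)\}\subset\{\xi>a\}\cup\{\eta>b\}$ and the subadditivity of $\mu$ --- is sound and genuinely different from the paper's proof, which writes a convex increasing $F$ as $\sup(ax+by+c)$ with $a,b\ge 0$ and quotes Bennett--Sharpley for the case $F(x,y)=x+y$. But your final step has a real gap. What you need is the implication $\mu(\zeta>\tau)\le s\Rightarrow\zeta^\star(s)\le\tau$; this is the \emph{converse} of the second implication in (2.4), not its contrapositive (the contrapositive reads $\zeta^\star(s)\le\tau\Rightarrow\mu(\zeta>\tau)\le s$, which goes the wrong way). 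The converse fails precisely in the boundary case $\mu(\zeta>\tau)=s$, which your estimate does not exclude: with the paper's usc (hence left-continuous) convention for the rearrangement, the set $\{\zeta^\star>\tau\}$ is an initial interval of length $\mu(\zeta>\tau)$ that may be closed on the right. Concretely, on $X=(0,2)$ with Lebesgue measure take $\zeta=\zeta^\star=1_{(0,1]}$, $\tau=1/2$, $s=1$: then $\mu(\zeta>\tau)=1\le s$ but $\zeta^\star(s)=1>\tau$.

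The gap is fixable by the same density device the paper uses in the opening paragraph of its proof. Run your argument with $\sigma'<\sigma$ and $t'<s-\sigma$: you obtain $\mu\big(F(\xi,\eta)>F(\xi^\star(\sigma'),\eta^\star(t'))\big)\le\sigma'+t'<s$, and now the \emph{strict} inequality forces $\{F(\xi,\eta)^\star>F(\xi^\star(\sigma'),\eta^\star(t'))\}\subset(0,\sigma'+t']$, which does not contain $s$; hence $F(\xi,\eta)^\star(s)\le F\big(\xi^\star(\sigma'),\eta^\star(t')\big)$. Letting $\sigma'\uparrow\sigma$ and $t'\uparrow s-\sigma$, left-continuity of $\xi^\star,\eta^\star$ together with continuity of $F$ (automatic for a convex function on $\bR^2$) yields (2.5). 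Note that this is where convexity quietly re-enters, in the weak form of continuity of $F$, so your parenthetical claim that monotonicity alone suffices should be qualified. With this correction the proof is complete, self-contained, and arguably more elementary than the paper's reduction to the Bennett--Sharpley inequality.
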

\begin{proof}
It suffices to prove (2.5) for a dense set of $(\sigma,s)$, because then to arbitrary $(\sigma,s)$ we can converge by $(\sigma_j,s_j)$
in this set in such a way that $\sigma_j<\sigma$, $s_j<s$, $s_j-\sigma_j<s-\sigma$; and (2.5) is obtained in the limit.
 
Consider first $F(x,y)=x+y$. If $\xi,\eta\ge 0$ are bounded, then $(\xi+\eta)^\star(s)\le \xi^\star(\sigma)+\eta^\star(s-\sigma)$ holds by
\cite[p. 41, (1.16)]{BS}. (Bennet and Sharpley use a different notion of rearrangement, but for nonnegative functions the two agree 
Lebesgue a.e.) The same also holds for arbitrary bounded $\xi,\eta$, because adding a constant will make them $\ge0$. 
Finally, (2.5) is obtained for
general $\xi,\eta$ by approximating them with bounded $\xi_j,\eta_j$, letting $j\to\infty$, and applying Lemma 2.1.

From this (2.5) follows when $F(x,y)=ax+by+c$, with $a,b\ge 0$. Indeed,
\[
F(\xi,\eta)^\star(s)=(a\xi+b\eta)^\star(s)+c\le(a\xi)^\star(\sigma)+(b\eta)^\star (s-\sigma)+c=F\big(\xi^\star(\sigma),\eta^\star(s-\sigma)\big).
\]
Since a general $F(x,y)$ can be represented as $\sup_{a,b,c} ax+by+c$ with a certain family of triples $a,b\ge0$, $c\in\bR$, (2.5) holds
in complete generality.
\end{proof}

We will talk about decreasing rearrangements of Borel functions $\xi$ defined on our K\"ahler manifold $(X,\omega)$, and the measure will be
 $\mu_u$ for some $u\in\cE$. If $\xi$ is naturally associated with a $u\in\cE$, for example because it arises as left or right velocity 
 $\partial^\mp_t\varphi(t)$ of a path $\varphi: [a,b]\to\cE$ and $u=\varphi(t)$, the measure will be $\mu_u$, and notation, as before
\begin{equation}  
\xi^\star=(\partial^{\mp}_t\varphi(t))^\star : (0, V)\to [-\infty, \infty], \quad \text{cf. } \  (2.1).
\end{equation}
Yet if $\xi$ is not clearly associated with some $u\in\cE$, we will have to indicate with respect to which measure $\mu_u$ we rearrange, and the notation will be 
\begin{equation}   
\xi^{\star u}:(0,V)\to [-\infty,\infty], \qquad u\in\cE.
\end{equation}

As said, \cite[Lemma 4.10]{Da1} implies:

\begin{lem}     
If $\varphi:[a,b]\to \cH^{1\bar 1}$ is a geodesic, then $(\partial^-_t\varphi(t))^\star=(\partial^+_s\varphi(s))^\star$ for 
$a< t\le b$, $a\le s<b$. 
\end{lem}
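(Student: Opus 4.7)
The plan is to extract from Darvas's Lemma~4.10 a conservation law for integrals of convex functions of the velocity, and then convert it into constancy of decreasing rearrangements via the layer cake formula. Darvas's lemma asserts that along a geodesic $\varphi$ valued in $\cH^{1\bar 1}$, for every convex nondecreasing $\chi:\bR\to\bR$ of moderate growth, the integral
\[
I_\chi(t)=\int_X \chi\bigl(\dot\varphi(t)\bigr)\, d\mu_{\varphi(t)}
\]
is independent of $t$. The remark in the excerpt indicates that this constancy holds on $[a,b)$ when $\dot\varphi$ is taken to be $\partial^+_t\varphi$, on $(a,b]$ when it is $\partial^-_t\varphi$, and that the two conventions yield the same constant. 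The hypothesis $\varphi(t)\in\cH^{1\bar 1}$ has two roles: it guarantees that the one-sided velocities are bounded so that $I_\chi$ is finite, and it is what makes the integration by parts in $t$ driving Darvas's proof legitimate.

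Next I would translate constancy of $I_\chi$ into constancy of rearrangements. Testing against the convex ramp $\chi_\tau(x)=(x-\tau)_+$, the layer cake identity gives
\[
I_{\chi_\tau}(t)=\int_X \bigl(\dot\varphi(t)-\tau\bigr)_+\, d\mu_{\varphi(t)}=\int_0^V \bigl(\dot\varphi(t)^\star(s)-\tau\bigr)_+\, ds,
\]
where $\dot\varphi(t)^\star$ denotes the decreasing rearrangement of $\dot\varphi(t)$ under $\mu_{\varphi(t)}$. Two decreasing usc functions on $(0,V)$ whose ramps $(f-\tau)_+$ have equal $L^1$ norms for every $\tau\in\bR$ necessarily coincide, because the $\tau$-derivatives of these norms recover the distribution functions $\tau\mapsto|\{f>\tau\}|$, which in turn determine a decreasing usc function uniquely. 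Consequently, constancy of $I_{\chi_\tau}(t)$ in $t$ forces $\dot\varphi(t)^\star$ to be the same function on $(0,V)$ for all admissible $t$ and for either choice of $\dot\varphi$, giving the conclusion of the lemma.

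The main obstacle I expect is verifying that Darvas's lemma really yields one and the same constant for the two conventions, i.e.\ for $\partial^+$ and for $\partial^-$. Pointwise in $x$ one has $\partial^-_t\varphi(t)(x)\le\partial^+_t\varphi(t)(x)$ by convexity of $\varphi(\cdot)(x)$, so testing against a convex increasing $\chi$ shows the $\partial^-$-constant is at most the $\partial^+$-constant. The reverse inequality should come from the monotone convergence $\partial^+_t\varphi(t)(x)\uparrow\partial^-_t\varphi(s)(x)$ as $t\uparrow s$ at any interior $s$, combined with weak continuity of $t\mapsto \mu_{\varphi(t)}$ along $\cH^{1\bar 1}$-geodesics (Bedford--Taylor continuity for decreasing sequences of bounded $\omega$-psh functions). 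This is the only place where Lemma~2.1 needs to be adapted to varying measures; once this is in hand, the rest of the argument is a routine unpacking of the layer cake formula.
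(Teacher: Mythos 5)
Your proposal is correct and rests on the same foundation as the paper, which proves this lemma simply by citing Darvas's Lemma 4.10 (with the parenthetical remark that $\dot u_t$ there is to be read as a right or left derivative) and offers no further argument. Your layer-cake passage from the conservation of $\int_X\chi\bigl(\dot\varphi(t)\bigr)\,d\mu_{\varphi(t)}$ to equality of decreasing rearrangements, and your sketch of why the $\partial^+$ and $\partial^-$ conventions yield the same constant, are sound elaborations of details the paper leaves entirely to the citation.
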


Among the subspaces of $\cE$ of interest, only $\cH$ is a manifold. Indeed, as an open subset of the Fr\'echet space $C^\infty(X)$, it inherits the structure of a Fr\'echet manifold; its tangent bundle $T\cH$ has a canonical trivialization 
$T\cH\approx\cH \times C^\infty(X)\to\cH$. As in \cite{L2}, we will consider certain functions $L: T\cH\to \bR$, Lagrangians, that give rise to geometric notions on $\cH$ such as metrics, and more generally, actions. The Lagrangians of interest extend to Banach 
bundles larger than $T\cH$. We write $B(X)$ for the Banach space of bounded Borel functions $X\to\bR$, endowed with sup norm, and $T^\infty \cE=\cE\times B(X)$. This is a set theoretical Banach bundle over $\cE$, into which $T\cH$ embeds via the trivialization $T\cH=\cH\times C^\infty(X)$.

\begin{defn}    
By an invariant convex Lagrangian we mean a function $L:T^\infty\cE\to\bR$ that is convex and continuous on the fibers $T_u^\infty\cE\approx B(X)$, $u\in\cE$, and is strict rearrangement invariant in the sense that $L(\xi)=L(\eta)$ if $\xi\in T_u^\infty\cE$, $\eta\in T_v^\infty\cE$ are equidistributed as functions on $(X,\mu_u), (X,\mu_v)$. We say $L$ is strongly continuous if the following holds: whenever 
$\xi_j\in T^\infty_u\cE$ are uniformly bounded and converge $\mu_u$--almost everywhere, then $L(\xi_j)$ is also convergent.
\end{defn}

For example, $L(\xi)=\int_X |\xi|^p \,d\mu_u$, $\xi\in T_u^\infty\cE$, defines an invariant, convex, strongly continuous Lagrangian, and so does its $p$'th  root, $1\le p<\infty$. But structurally the simplest Lagrangians, that generate all invariant, convex, strongly continuous Lagrangians  in a precise sense, are obtained as follows.

\begin{lem}      
If $f:(0,V)\to \bR$ is a decreasing integrable function, then
\begin{equation}   
L(\xi)=\int^V_0 \xi^{\star u}f, \quad \xi\in T^\infty_u\cE\approx B(X),
\end{equation}
defines an invariant, convex, strongly continuous Lagrangian on $T^\infty\cE$.
\end{lem}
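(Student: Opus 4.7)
My plan is to work fiberwise on $T^\infty_u\cE \approx B(X)$ and verify the required properties one at a time. The initial observation is that for $\xi \in B(X)$ one has $|\xi^{\star u}(s)| \le \|\xi\|_\infty$ for a.e.\ $s \in (0,V)$, so $\xi^{\star u} f \in L^1(0,V)$ and $L(\xi) \in \bR$ with $|L(\xi)| \le \|\xi\|_\infty \|f\|_{L^1}$.

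Strict rearrangement invariance is essentially definitional: if $\xi \in T^\infty_u\cE$ and $\eta \in T^\infty_v\cE$ are equidistributed, then by uniqueness of the decreasing rearrangement $\xi^{\star u} = \eta^{\star v}$ a.e.\ on $(0,V)$, so $L(\xi) = L(\eta)$. For fiber continuity I would use that adding constants commutes with rearrangement: the bound $\xi \le \eta + \|\xi - \eta\|_\infty$ yields $\xi^{\star u} \le \eta^{\star u} + \|\xi - \eta\|_\infty$ pointwise (and similarly with the roles of $\xi, \eta$ reversed), giving the Lipschitz estimate $|L(\xi) - L(\eta)| \le \|f\|_{L^1}\|\xi - \eta\|_\infty$.

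Convexity is the one step that needs real work. I would combine the Hardy--Littlewood--Polya inequality
\[
\int_0^s (\xi+\eta)^{\star u} \le \int_0^s \xi^{\star u} + \int_0^s \eta^{\star u}, \qquad s \in [0,V],
\]
with the boundary equality at $s = V$ (both sides reduce to $\int_X (\xi+\eta)\,d\mu_u$), and apply Hardy's lemma in a form that tolerates a decreasing weight whose total mass discrepancy vanishes. Concretely, write $f = (f - f(V^-)) + f(V^-)$, truncating $f$ from below first if $f(V^-) = -\infty$ and passing to a dominated-convergence limit (legitimate since $|f_n| \le |f|$ under $f_n = \max(f,-n)$): the constant piece pairs with the zero-sum difference of indefinite integrals to give zero, while the decreasing nonnegative piece $f - f(V^-)$ obeys the standard form of Hardy's lemma. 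This yields the subadditivity $L(\xi+\eta) \le L(\xi) + L(\eta)$, and combined with the positive homogeneity $L(t\xi) = tL(\xi)$ for $t \ge 0$ (immediate from $(t\xi)^{\star u} = t\xi^{\star u}$) it upgrades to convexity via $L(t\xi + (1-t)\eta) \le L(t\xi) + L((1-t)\eta) = tL(\xi) + (1-t)L(\eta)$.

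Strong continuity follows directly from Lemma 2.1 plus dominated convergence. If $\|\xi_j\|_\infty \le M$ and $\xi_j \to \xi$ $\mu_u$-a.e., Lemma 2.1 gives $\xi_j^{\star u}(s) \to \xi^{\star u}(s)$ at every continuity point of the monotone function $\xi^{\star u}$, hence on a set of full Lebesgue measure in $(0,V)$; since $|\xi_j^{\star u} f| \le M|f| \in L^1(0,V)$, dominated convergence delivers $L(\xi_j) \to L(\xi)$, so in particular the sequence converges. I expect convexity to be the main obstacle, because the weight $f$ may change sign and may blow up near $0$, but the HLP-plus-Hardy strategy with the boundary-equality trick should handle this cleanly.
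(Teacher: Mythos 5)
Your proof is correct, and the invariance and strong-continuity steps coincide with the paper's (Lemma 2.1 plus dominated convergence). The convexity step, however, takes a genuinely different route. The paper proves the representation $L(\xi)=\sup\big\{\int_X\gamma\xi\,d\mu_u:\gamma^\star=f\big\}$ by choosing a measure-preserving $\theta:(X,\mu_u)\to(0,V)$ and invoking the Hardy--Littlewood resonance theorem \cite[Theorem 2.6, p.~49]{BS} (after reducing to bounded nonnegative $f$ and $\xi$ by adding constants and approximating); convexity is then automatic since $L$ is a supremum of linear forms, and this sup representation is also the prototype for the families $\cA_u$ used in Section 8. You instead prove sublinearity directly: positive homogeneity from $(t\xi)^{\star u}=t\xi^{\star u}$, and subadditivity from the Hardy--Littlewood--P\'olya submajorization $\int_0^s(\xi+\eta)^{\star u}\le\int_0^s\xi^{\star u}+\int_0^s\eta^{\star u}$ combined with Hardy's lemma. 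Your handling of the sign-changing weight is the one point that needs care, and you get it right: splitting $f=(f-f(V^-))+f(V^-)$ works because the submajorization becomes an equality at $s=V$ (rearrangement preserves the integral), so the constant piece contributes zero, while the nonnegative decreasing piece is covered by the standard Hardy lemma; the truncation $f_n=\max(f,-n)$ with $|f_n|\le|f|$ legitimately handles $f(V^-)=-\infty$ by dominated convergence. What your route buys is a more self-contained argument that avoids constructing $\theta$ and the sup representation; what it loses is that representation itself, which the paper reuses. Your explicit Lipschitz bound $|L(\xi)-L(\eta)|\le\|f\|_{L^1}\|\xi-\eta\|_\infty$ for fiberwise norm-continuity is a worthwhile addition that the paper leaves implicit.
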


In (2.8) the integral is against Lebesgue measure.
\begin{proof}
If $\xi\in T^\infty_u\cE$ and $\eta\in T^\infty_v\cE$ are equidistributed, then $\xi^{\star u}=\eta^{\star v}$ and $L(\xi)=L(\eta)$. 
For the rest of the proof we fix $u\in\cE$, consider functions $\xi\in T^\infty_u\cE\approx B(X,\mu_u)$, and will omit the $u$ from the notation $\xi^{\star u}$ of decreasing rearrangement.

To prove that $L$ is convex, we can assume $f$ in (2.8) is usc.  We claim
\begin{equation}     
L(\xi)=\sup \Big\{\int_X\gamma\xi\, d\mu_u \mid \gamma: X\to\bR\, \text{ is Borel},\, \gamma^\star=f\Big\}.
\end{equation}
Indeed, let $\theta:X\to (0,V)$ preserve measure, when $X$ is endowed with $\mu_u$ and $(0,V)$ with Lebesgue measure, restricted to the Borel sets. The existence of such $\theta$ follows e.g. from \cite[Proposition 7.4, p. 81]{BS}. Then 
$\gamma^\star=f$ is equivalent to $\gamma\sim f\circ\theta$. This shows that it suffices to prove (2.9) when $f$ is bounded, the general case will follow by approximation. Since adding a constant $c$ to $f$ changes both sides of (2.9) by $c\int_X\xi\, d\mu_u$, we can even assume $f\ge 0$. Similarly, we can assume $\xi\ge 0$. But then our claim
 \[
 L(\xi)=\int^V_0(f\circ \theta)^\star \xi^\star=\sup\Big\{ \int_X \gamma\xi\,d\mu_u: \gamma\sim f\circ \theta\Big\}
 \]
 is an instance of \cite[Theorem 2.6, p. 49]{BS}.  By (2.9) $L$ is the supremum of linear forms, and must be convex.
 
 Finally, strong continuity follows from Lemma 2.1 and dominated convergence.
\end{proof}

For the rest of the section we fix an invariant, convex, strongly continuous Lagrangian $L:T^\infty\cE\to\bR$.
The Principle of Least Action \cite[Theorem 8.1]{L2} then says:
\begin{thm}      
 If $\varphi:[a,b]\to\cE^\infty$ is piecewise $C^1$, as a map into the Banach space $B(X)$, and $\psi:[a,b]\to\cE^\infty$ is a geodesic of class $C^1$, connecting $\varphi(a)$ and $\varphi(b)$, then 
\[\int^b_a L(\partial_t \varphi(t))\,{dt}\ge \int^b_a L(\partial_t\psi(t))\,dt.\]
\end{thm}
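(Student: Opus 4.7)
My plan is to reduce the theorem to Lagrangians of the form $L_f(\xi)=\int_0^V\xi^{\star u}f$ from Lemma 2.5, then translate the inequality into a pointwise statement about rises. Legendre duality, together with rearrangement invariance of $L$ and its conjugate, expresses any admissible $L$ as $\sup_f[L_f-\phi(f)]$ for some functional $\phi$ on decreasing integrable $f$. Along the geodesic $\psi$, $(\partial_t\psi(t))^\star$ is (in a limiting sense made precise by the next paragraph) constant in $t$, so the supremum commutes with the time integral and a standard sup/integral manipulation reduces the general case to the case $L=L_f$. A constant-shift argument, using that $\xi\mapsto\int_X\xi\,d\mu_u$ integrates along a path to a difference of Monge--Amp\`ere energies at its endpoints, further allows $f\ge 0$. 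Fubini then turns the inequality into $\int_0^V f(s)\Phi_\varphi(s)\,ds\ge\int_0^V f(s)\Phi_\psi(s)\,ds$, where
\[
\Phi_\sigma(s):=\int_a^b\bigl(\partial_t\sigma(t)\bigr)^{\star\sigma(t)}(s)\,dt,
\]
and arbitrariness of $f$ reduces the task to the pointwise bound $\Phi_\varphi(s)\ge\Phi_\psi(s)$ at every continuity point $s$ of $\rho[\varphi(a),\varphi(b)]$.

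To identify $\Phi_\psi$, I would approximate the $C^1$ geodesic $\psi$ from above by a decreasing sequence of $\cH^{1\bar 1}$-geodesics $\psi_j$, furnished by the construction recalled after Theorem 1.2. Lemma 2.3 makes $(\partial_t\psi_j(t))^\star$ independent of $t\in(a,b)$, and Theorem 1.2 identifies its a.e.\ limit with $\rho_\psi=(b-a)^{-1}\rho[\varphi(a),\varphi(b)]$, after rescaling from $[a,b]$ to $[0,1]$. The $C^1$-regularity of $\psi$ in $B(X)$ combined with monotone convergence $\psi_j\searrow\psi$ yields a uniform bound on $\partial_t\psi_j$, justifying dominated convergence under the time integral (and the use of Lemma 2.1 to pass to the limit of rearrangements). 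The conclusion is $\Phi_\psi(s)=\rho[\varphi(a),\varphi(b)](s)$.

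The main obstacle is the matching lower bound $\Phi_\varphi(s)\ge\rho[\varphi(a),\varphi(b)](s)$ for an arbitrary piecewise $C^1$ path. I would attack it by a Riemann-sum argument built on the sub-additivity of decreasing rearrangement in the $F(x,y)=x+y$ case of Lemma 2.2. Partitioning $[a,b]$ as $a=t_0<\dots<t_N=b$, the $B(X)$-estimate $\varphi(t_i)-\varphi(t_{i-1})=(t_i-t_{i-1})\partial_t\varphi(t_{i-1})+o(t_i-t_{i-1})$ on each subinterval, combined with the definition of the rise as a monotone limit of velocity-rearrangements for approximating $\cH^{1\bar 1}$-geodesics, yields $\rho[\varphi(t_{i-1}),\varphi(t_i)]\le(t_i-t_{i-1})(\partial_t\varphi(t_{i-1}))^{\star\varphi(t_{i-1})}+o(t_i-t_{i-1})$. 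Iterating Lemma 2.2 over the $N$ pieces produces a ``triangle inequality''
\[
\rho[\varphi(a),\varphi(b)](s)\le\sum_{i=1}^N\rho[\varphi(t_{i-1}),\varphi(t_i)](s_i),\qquad \sum_{i=1}^N s_i=s;
\]
with $s_i=s(t_i-t_{i-1})/(b-a)$ the right-hand side is a Riemann sum for $\Phi_\varphi(s)$, which converges as the mesh tends to zero. The subtle technical point is that velocities on adjacent segments belong to distinct fibers $T^\infty_{\varphi(t_i)}\cE\approx B(X,\mu_{\varphi(t_i)})$, and the only common language for comparing them is the decreasing rearrangement on $(0,V)$, which is exactly what Lemma 2.2 is tailored for. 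Establishing the triangle inequality for rises without invoking the theorem itself, i.e.\ via a direct subgeodesic comparison using the envelope formula (1.1), is the real technical heart of the argument.
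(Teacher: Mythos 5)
There is a genuine gap here, and in fact your central intermediate claim is false. (Note first that the paper does not prove this statement at all: it is imported verbatim from \cite[Theorem 8.1]{L2}. Moreover the present paper's logic runs in the \emph{opposite} direction to yours: Lemma 4.4 and Theorem 5.9 \emph{derive} the integrated triangle inequality for rises from Theorem 2.6. Your plan derives Theorem 2.6 from a triangle inequality for rises which you leave unproved --- you yourself flag it as ``the real technical heart'' --- so the argument is not completed, and within this paper's framework it would be circular.)

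The more serious problem is that the pointwise bound $\Phi_\varphi(s)\ge\Phi_\psi(s)$ to which you reduce is false. Take $u,v\in\cH$ with $\rho[u,v]$ nonconstant, let $\varphi:[0,2]\to\cH^{1\bar1}$ concatenate the geodesic from $u$ to $v$ with the geodesic from $v$ back to $u$ (piecewise $C^1$ by Chen--B{\l}ocki), and let $\psi\equiv u$ be the constant geodesic. Then $\Phi_\psi\equiv 0$, while by Definition 3.1 and (3.2) one has $\Phi_\varphi(s)=\rho[u,v](s)+\rho[v,u](s)=\rho[u,v](s)-\rho[u,v](V-s)<0$ for a set of $s>V/2$ of positive measure. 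The flaw in the reduction is that ``arbitrariness of $f$'' is illusory: $f$ ranges only over \emph{decreasing} nonnegative functions, so the family of inequalities $\int_0^Vf\,\Phi_\varphi\ge\int_0^Vf\,\Phi_\psi$ is equivalent only to the Hardy--Littlewood--P\'olya majorization $\int_0^\lambda\Phi_\varphi\ge\int_0^\lambda\Phi_\psi$ for all $\lambda$, not to a pointwise inequality --- and only the majorized form can possibly be true. Separately, even if your split triangle inequality for rises were available, the quantity $\sum_{i}\rho[\varphi(t_{i-1}),\varphi(t_i)](s_i)$ with $s_i=s(t_i-t_{i-1})/(b-a)\to0$ is not a Riemann sum for $\Phi_\varphi(s)$: it approximates $\sum_i(t_i-t_{i-1})\bigl(\partial_t\varphi(t_{i-1})\bigr)^{\star}(s_i)$ with the decreasing rearrangements evaluated near $0$, where they are \emph{larger} than at $s$, so the limit overshoots $\Phi_\varphi(s)$ and the chain of inequalities yields nothing. (Your identification $\Phi_\psi=\rho[\varphi(a),\varphi(b)]$ is correct, and follows more directly from Corollary 6.3 once $C^1$ regularity forces $\partial^-\psi=\partial^+\psi$.)
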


Given $T\in (0,\infty)$, the (least) action between $u,v\in \cE^\infty$ is 
\begin{equation}         
\cL_T(u,v)=\inf\int^T_0 L(\partial_t\varphi(t))\,dt,
\end{equation}
the infimum taken over all piecewise $C^1$ paths $\varphi:[0,T]\to\cE^\infty$ joining $u,v$. If the geodesic $\psi:[0,T]\to\cE^\infty$ joining $u,v$ is $C^1$, Theorem 2.6 implies.
\begin{equation}        
\cL_T(u,v)=\int^T_0 L(\partial_t\psi(t))\,dt.
\end{equation}
We will need the following continuity properties of $\cL_T, L$:
\begin{lem}      
If $u_j, v_j\in C(X)\cap \psh(X,\omega)$ decrease, or converge uniformly, to $u\in\cE^\infty$, resp. $v\in\cE^\infty$, then 
\[\lim_{j\to\infty} \cL_T(u_j,v_j)=\cL_T(u,v)\in \bR.\]
\end{lem}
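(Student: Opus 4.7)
My approach is to express $\cL_T$ via the rise $\rho[u,v]$ and a single functional on decreasing functions. Strict rearrangement invariance of $L$ makes well-defined the functional $L_0(f) := L(f\circ\theta)$ on bounded decreasing $f:(0,V)\to\bR$, where $\theta:(X,\mu_w)\to((0,V),dx)$ is any measure-preserving map with $w\in\cE$ arbitrary. Lemma 2.1 combined with strong continuity of $L$ then implies that $L_0$ is continuous along uniformly bounded, a.e.\ convergent sequences of decreasing functions.

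I would then establish the representation $\cL_T(u,v)=T\cdot L_0(\rho[u,v])$ for $u,v\in\cE^\infty$. On $\cH$ this is immediate: the connecting geodesic $\psi$ is $C^1$ into $C(X)$ by Chen--B{\l}ocki and lands in $\cH^{1\bar 1}$, so Lemma 2.3 gives $(\partial_t \psi(t))^\star \equiv \rho[u,v]$, whence $L(\partial_t \psi(t)) \equiv L_0(\rho[u,v])$ by invariance, and the Principle of Least Action (Theorem 2.6) concludes. To extend to $\cE^\infty$, choose $\tilde u_k,\tilde v_k\in\cH$ with $\tilde u_k \downarrow u$, $\tilde v_k \downarrow v$ via \cite{BK,De,DP}. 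By \cite[Proposition 3.15]{Da3} the geodesics $\tilde\psi_k$ decrease to $\psi$, and Theorem 1.2 yields $\rho[\tilde u_k,\tilde v_k]\to \rho[u,v]$ a.e., uniformly bounded; hence $\cL_T(\tilde u_k,\tilde v_k)=T\cdot L_0(\rho[\tilde u_k,\tilde v_k])\to T\cdot L_0(\rho[u,v])$. Matching this to the infimum (2.10) defining $\cL_T(u,v)$, via a near-optimal competitor whose endpoints are adjusted by short segments of vanishing action (controlled by strong continuity of $L$), extends the representation to $\cE^\infty$.

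With the representation in hand, the lemma reduces to proving $\rho[u_j,v_j]\to\rho[u,v]$ a.e.\ with a uniform bound. In the decreasing case $\psi_j\downarrow\psi$ by \cite[Proposition 3.15]{Da3}, and a further approximation of each $\psi_j$ by $\cH^{1\bar 1}$-geodesics feeds Theorem 1.2 directly. In the uniform case, writing $c_j=\max(\|u_j-u\|_\infty,\|v_j-v\|_\infty)$, the paths $\psi\pm c_j$ are sub/supergeodesics between $u\pm c_j, v\pm c_j$, so comparison forces $\|\psi_j-\psi\|_\infty\le c_j\to 0$ on $[0,T]\times X$; the same $\cH^{1\bar 1}$-approximation combined with Lemma 2.1 transfers convergence to the rises. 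Applying the continuous $L_0$ yields $\cL_T(u_j,v_j)\to\cL_T(u,v)\in\bR$, finite because $L_0$ is real-valued on bounded decreasing functions. The hardest step is the extension of the representation from $\cH$ to $\cE^\infty$, since the Principle of Least Action applies directly only when the geodesic is $C^1$; beyond $\cH$ one must access the infimum (2.10) through an approximation that is essentially a one-sided continuity statement packaged inside the lemma itself.
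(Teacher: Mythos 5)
The paper does not actually prove this statement: it is Lemma 2.7, and the entire proof given is the citation ``This is [L2, Lemma 9.4]''. So the relevant question is whether your argument stands on its own, and I do not think it does as written.

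Your plan is to establish the representation $\cL_T(u,v)=TL_\star(\rho[u,v]/T)$ on all of $\cE^\infty$ (note the missing rescaling in your formula: on $[0,T]$ the geodesic has $(\partial_t\psi(t))^\star=\rho[u,v]/T$, not $\rho[u,v]$) and then deduce the lemma from a.e.\ convergence of rises. The representation on $\cH$ is fine, via Chen--B{\l}ocki and Theorem 2.6. But the extension to $\cE^\infty$ is exactly where the content of the lemma lives, and you acknowledge this yourself: matching $\lim_k\cL_T(\tilde u_k,\tilde v_k)$ to the infimum (2.10) at $(u,v)$ \emph{is} the special case of Lemma 2.7 in which the approximants lie in $\cH$. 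In the paper's own architecture the implication runs the other way --- Lemma 3.4, which is the representation formula on $\cH^{1\bar1}$, is \emph{derived from} Lemma 2.7 --- so your reduction inverts the logical order and, unless the matching step is proved independently, is circular.

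The device you offer for that step, ``short segments of vanishing action,'' does not obviously work and is not carried out. First, $\tilde u_k\in\cH$ decreasing to a merely bounded $u\in\cE^\infty$ need not converge uniformly ($u$ can be discontinuous), so $\|\tilde u_k-u\|_\infty$ need not tend to $0$ and the segment's velocity is not small. Second, for a general convex, non-homogeneous $L$ the action of a connecting segment of duration $\varepsilon$ is roughly $\varepsilon L\big((\tilde u_k-u)/\varepsilon\big)$ evaluated at moving footpoints; making this vanish requires a specific order of limits ($k\to\infty$ before $\varepsilon\to0$), a continuity statement for $T\mapsto\cL_T$ to absorb the reparametrization from $[0,T-2\varepsilon]$ to $[0,T]$, and a version of strong continuity that tolerates the varying base point. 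None of this is supplied, and it is precisely the technical work that [L2, Lemma 9.4] performs. The remaining parts of your argument (the comparison $\|\psi_j-\psi\|_\infty\le c_j$ in the uniform case, the use of Theorem 1.2 and Lemma 2.1 to pass to rises, finiteness of $L_\star$ on bounded decreasing functions) are correct, but they all sit downstream of the unproved step.
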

This is \cite[Lemma 9.4]{L2}. By (the proof of) Lemma 10.5 below and the discussion around it, it  also holds if 
$u_j,v_j\in \cE^\infty$, and even more generally.

Let $B(0, V)$ stand for the Banach space of bounded Borel functions on $(0, V)$, with the sup norm. Any strict rearrangement invariant function $L: T^\infty\cE\to \bR$ determines (and is determined by) a function $L_\star: B(0,V)\to\bR$, 
\begin{equation}         
L_\star(\zeta)=L(\xi) \qquad \text{if } \zeta\in B(0, V)\, \text{ and } \xi\in T^\infty_u\cE\ \text{ are equidistributed.}
\end{equation}
In particular, $L(\xi)=L_\star(\xi^\star)$. Indeed, if $u\in\cE$ and
$\theta:(X, \mu_u)\to \big((0,V), \rm{Lebesgue}\big)$\footnote{Here and in what follows, we will always 
restrict Lebesgue measure on $(0,V)$ to the Borel $\sigma$--algebra.}
preserves measure, we can define $L_\star(\zeta)=L(\zeta\circ\theta)$.

\begin{lem}    
If uniformly bounded $\zeta_j\in B(0, V)$ converge a.e. to $\zeta\in B(0, V)$, then 
$L_\star(\zeta_j)\to L_\star(\zeta)$.
\end{lem}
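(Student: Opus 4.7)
The plan is to reduce Lemma 2.8 to the strong continuity hypothesis on $L$ by pulling $\zeta_j$, $\zeta$ back to $X$ via a measure-preserving map.

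Fix any $u\in\cE$ and, invoking \cite[Proposition 7.4, p.~81]{BS} as was done just above in the proof of Lemma 2.4, choose a Borel measurable map $\theta:X\to(0,V)$ that is measure-preserving when $X$ carries $\mu_u$ and $(0,V)$ carries Lebesgue measure. Set $\xi_j=\zeta_j\circ\theta$ and $\xi=\zeta\circ\theta$; since $\theta$ is Borel and the $\zeta_j$, $\zeta$ are bounded Borel on $(0,V)$, these are bounded Borel functions on $X$, hence elements of $T^\infty_u\cE\approx B(X)$. By the very definition (2.12) of $L_\star$ we have $L_\star(\zeta_j)=L(\xi_j)$ and $L_\star(\zeta)=L(\xi)$, because $\theta$ being measure-preserving gives $\zeta_j\sim\xi_j$ and $\zeta\sim\xi$.

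Next I would verify $\mu_u$-a.e.\ convergence $\xi_j\to\xi$ on $X$. Let $N\subset(0,V)$ be a Borel set of Lebesgue measure zero outside of which $\zeta_j\to\zeta$ pointwise; then $\theta^{-1}(N)$ is a Borel set of $\mu_u$-measure zero, and on its complement $\xi_j(x)=\zeta_j(\theta(x))\to\zeta(\theta(x))=\xi(x)$. The uniform bound on the $\zeta_j$ transfers to a uniform bound on the $\xi_j$ in $B(X)$. Strong continuity of $L$, applied to the interleaved sequence $\xi_1,\xi,\xi_2,\xi,\ldots$ (still uniformly bounded and still convergent $\mu_u$-a.e.\ to $\xi$), forces the common limit of $L$ of this sequence to equal $L(\xi)$; hence $L(\xi_j)\to L(\xi)$, that is, $L_\star(\zeta_j)\to L_\star(\zeta)$.

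There is no real obstacle here; the only point worth flagging is that the strong continuity hypothesis in Definition 2.4 merely asserts convergence of $L(\xi_j)$ without naming the limit, so the interleaving trick is needed to pin down that limit as $L(\xi)$.
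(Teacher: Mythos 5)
Your proof is correct and follows essentially the same route as the paper: pull $\zeta_j,\zeta$ back to $X$ by a measure-preserving $\theta$ and invoke strong continuity of $L$. Your interleaving trick to identify the limit as $L(\xi)$ is a welcome extra precision, since Definition 2.4 literally only asserts that $L(\xi_j)$ converges; the paper's one-line proof takes this identification for granted.
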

\begin{proof}With a measure preserving $\theta:(X,\mu_u)\to (0,V)$ as before,
\[\lim_j L_\star(\zeta_j)=\lim_j L(\zeta_j\circ\theta)=L(\zeta\circ\theta)=L_\star(\zeta).\]
\end{proof}

\section{Rise in $\cH^{1\bar 1}$}       

In this section we introduce the notion of rise in $\cH^{1\bar 1}$. 

\begin{defn}      
The rise $\rho_\varphi:(0, V)\to\bR$ of a geodesic $\varphi:[a,b]\to \cH^{1\bar 1}$ is 
$(\partial^-_t\varphi(t))^\star=(\partial^+_s\varphi(s))^\star$ for any $a<t\le b$, $a\le s<b$, cf. Lemma 2.3. 
The rise between $u,v\in \cH^{1\bar 1}$, denoted $\rho[u,v]$, is the rise $\rho_\varphi$ of the geodesic 
$\varphi:[0, 1]\to\cH^{1\bar 1}$ joining $u$ and $v$.
\end{defn}

An affine reparametrization  $\varphi(pt+q)$ of a geodesic $\varphi$, where $p,q\in\bR$, is also a geodesic, because the same holds for subgeodesics. This implies that with an arbitrary geodesic $\varphi:[a,b]\to\cH^{1\bar 1}$
\begin{equation}      
\rho[\varphi(a),\varphi(b)]=(b-a)\rho_\varphi.
\end{equation}

A geodesic $\varphi:[a,b]\to \cH^{1\bar 1}$ can be reversed to produce a geodesic $\psi:[-b,-a]\to\cH^{1\bar 1}$, 
$\psi(t)=\varphi(-t)$. It follows that the function $-\rho_\varphi(V-\cdot)$ is a decreasing function, equidistributed with 
$\rho_\psi$. In general, it will not be usc, so all we can say is that
\begin{equation}      
\rho_\psi(\lambda)=-\rho_\varphi(V-\lambda) \quad \text{for a.e. } \lambda\in(0,V).
\end{equation}
\begin{lem} 
Suppose $u,v,w\in\cH^{1\bar1}$ and $c\in\bR$.
\begin{itemize}
\item[(a)] If $v\le w$ then $\rho[u,v]\le \rho[u,w]$ and $\rho[v,u]\ge\rho[w,u]$.
\item[(b)] If $c\in\bR$ then $\rho[u, v+c]=\rho[u,v]+c=\rho[u-c, v]$.
\end{itemize}\end{lem}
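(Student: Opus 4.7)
Both parts reduce to pairing the monotonicity of the supremum (1.1) with the monotonicity and translation equivariance of the decreasing rearrangement, the only thing to keep track of being that in each comparison we rearrange on the same measure space.

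For (a), let $\varphi_v,\varphi_w:[0,1]\to \cH^{1\bar 1}$ be the geodesics joining $u$ to $v$ and to $w$. Since $v\le w$, the admissible class of subgeodesics for the pair $(u,v)$ is contained in the admissible class for $(u,w)$, so (1.1) yields $\varphi_v\le\varphi_w$ pointwise on $[0,1]\times X$. With the common initial value $\varphi_v(0)=\varphi_w(0)=u$, dividing $\varphi_v(t)-u\le\varphi_w(t)-u$ by $t>0$ and letting $t\to 0^+$ gives
\[
\partial_t^+\varphi_v(0)(x)\le \partial_t^+\varphi_w(0)(x),\qquad x\in X.
\]
Both sides are decreasingly rearranged on $(X,\mu_u)$ (the measure is the same because the functions $\varphi_v(0),\varphi_w(0)$ are), so monotonicity of $\star$ yields $\rho[u,v]=(\partial_t^+\varphi_v(0))^\star\le (\partial_t^+\varphi_w(0))^\star=\rho[u,w]$. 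For the reversed inequality, apply the same reasoning to $\psi_v(t):=\varphi_v(1-t)$ and $\psi_w(t):=\varphi_w(1-t)$, which remain ordered $\psi_v\le\psi_w$ and share value $u$ at $t=1$; dividing by $t-1<0$ flips the sign, so $\partial_t^-\psi_v(1)\ge \partial_t^-\psi_w(1)$, hence $\rho[v,u]\ge \rho[w,u]$.

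For (b), note that the map $\varphi\mapsto \varphi+ct$ is a bijection between subgeodesics with boundary constraints $(\le u,\le v)$ and those with constraints $(\le u,\le v+c)$: the added function $ct$ is affine in $t$ and constant in $x$, so it preserves $\pi^*\omega$-plurisubharmonicity of the associated $\Phi$, and it shifts each boundary value by the prescribed amount. Taking the sup in (1.1) on both sides shows that if $\varphi$ is the geodesic joining $u$ to $v$, then $\tilde\varphi(t)=\varphi(t)+ct$ is the geodesic joining $u$ to $v+c$. Since $\tilde\varphi(0)=u$, the rearranging measure is still $\mu_u$, and $\partial_t^+\tilde\varphi(0)=\partial_t^+\varphi(0)+c$; adding a constant shifts the decreasing rearrangement by the same constant, so $\rho[u,v+c]=\rho[u,v]+c$. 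The identity $\rho[u-c,v]=\rho[u,v]+c$ is obtained analogously from $\psi(t):=\varphi(t)-c(1-t)$, the geodesic from $u-c$ to $v$, noting that $\mu_{u-c}=\mu_u$ since $u-c$ and $u$ differ by a constant.

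\textbf{Expected difficulty.} There is no serious obstacle: once one recognizes that the sup defining a geodesic is monotone in the endpoints and that adding an affine-in-$t$ function preserves the class of subgeodesics, the rest is bookkeeping of rearrangements. The only point requiring attention is matching the reference measure $\mu_u$ at the endpoint where the derivative is computed; this is automatic because the two geodesics being compared share a common endpoint, and in (b) because $\mu_{u+c}=\mu_u$.
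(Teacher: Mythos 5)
Your proof is correct and follows essentially the same route as the paper's: monotonicity of the supremum in Definition 1.1 gives the pointwise ordering of the two geodesics sharing the endpoint $u$, comparison of difference quotients at that common endpoint orders the one-sided velocities, and rearrangement with respect to the common measure $\mu_u$ (resp. $\mu_{u-c}=\mu_u$) finishes both parts; the affine-in-$t$ shift argument for (b) is exactly the paper's $\varphi_c(t)=\varphi_0(t)+ct$. You have merely written out the "can be proved similarly" cases that the paper omits.
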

\begin{proof}
Let $\varphi_c:[0,1]\to\cH^{1\bar 1}$ be the geodesic that joins $u$ with $v+c$, and $\psi:[0,1]\to\cH^{1\bar 1}$ the geodesic that joins $u$ with $w$.
Definition 1.1 implies that $\varphi_0(t)\le\psi(t)$, and
$\varphi_c(t)=\varphi(t)+ct$. Since $\varphi_0(0)=\psi(0)$,
\[
\partial^+\varphi_0(0)\le\partial^+\psi(0) \quad\text{and}\quad \partial_t^\pm\varphi_c(t)=\partial_t^\pm\varphi_0(t)+c.
\]
Hence the corresponding decreasing rearrangements are related in the same way, $\rho[u,v]\le\rho[u,w]$ and $\rho[u,v+c]=\rho[u,v]+c$. The rest of the lemma can be proved similarly.
\end{proof}

\begin{lem}     
Consider geodesics $\varphi, \varphi_j: [a,b]\to \cH^{1\bar 1}$, $j\in \bN$, and assume that the $(1,1)$ forms 
$\partial\op\varphi_j(a), \pa\op\varphi_j(b)$ on $X$ are uniformly bounded. If $\varphi_j(a), \varphi_j(b)$ decrease, or converge uniformly, to $\varphi(a)$, resp. $\varphi(b)$, then $\rho_{\varphi_j}\to\rho_\varphi$ a.e. More precisely, the latter convergence holds at all points of continuity of $\rho_\varphi$.

Equivalently, if $u_j, v_j\in\cH^{1\bar 1}$ decrease or converge uniformly to $u,v$, and
$\partial\op u_j,\partial\op v_j$ are uniformly bounded, then $\rho[u_j,v_j]\to\rho[u,v]$ at all points of continuity of $\rho[u,v]$.
\end{lem}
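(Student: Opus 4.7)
The strategy is to convert the desired convergence of rises into convergence of moments, namely $\int_0^V f(\rho_{\varphi_j})\,ds\to\int_0^V f(\rho_\varphi)\,ds$ for every convex, increasing $f:\bR\to\bR$, and then to upgrade this to pointwise convergence at continuity points of the monotone limit $\rho_\varphi$. The bridge to moments is Lemma 2.7 (continuity of the Orlicz action). To prepare, observe that the uniform bounds on $\pa\op\varphi_j(a)$ and $\pa\op\varphi_j(b)$ force, via the maximum principle applied to the subgeodesic extension $\Phi_j$ on the strip $S_{ab}$, a uniform sup-norm bound on the one-sided velocities $\partial_t^\pm\varphi_j(t)$ throughout $X\times[a,b]$; consequently $\rho_{\varphi_j}$ is uniformly bounded in $L^\infty(0,V)$.

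\textbf{Action identity.} For each convex, increasing $f$, the functional $L_f(\xi)=\int_X f(\xi)\,d\mu_u$ is verified directly to be an invariant, convex, strongly continuous Lagrangian on $T^\infty\cE$. Lemma 2.3 and equidistribution give $L_f(\partial_t^+\psi(t))=\int_0^V f(\rho_\psi)\,ds$, independent of $t$, for every $\cH^{1\bar 1}$-geodesic $\psi$. When $\psi$ has $\cH$-endpoints, Chen--B\l ocki regularity makes $\psi$ a $C^1$ path, so formula (2.11) gives $\cL_{b-a}(\psi(a),\psi(b))=(b-a)\int_0^V f(\rho_\psi)\,ds$. I extend this identity to arbitrary $\cH^{1\bar 1}$-endpoints by approximating $\psi(a),\psi(b)$ from above by $\cH$-data $u_k,v_k$ (Demailly regularization): the left-hand side converges by Lemma 2.7, while the right-hand side is controlled by the $\cH$-special case of the present lemma combined with uniform $L^\infty$-bounds on the approximating rises coming from Lemma 3.2(a)---the bracketing $\rho[u_k,v_k]\le\rho[u,v_k]\le\rho[u,v_1]$ for upper bounds, and the reversal (3.2) applied to $(v_k,u_k)$ for lower bounds---so that dominated convergence applies.

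\textbf{Conclusion.} With the action identity $\cL_{b-a}(u,v)=(b-a)\int_0^V f(\rho[u,v])\,ds$ now in place for all $u,v\in\cH^{1\bar 1}$, Lemma 2.7 applied to the sequence $\varphi_j$ itself yields $\int_0^V f(\rho_{\varphi_j})\,ds\to\int_0^V f(\rho_\varphi)\,ds$ for every convex, increasing $f$. Specializing to $f(x)=|x|^p$ for all $p\ge 1$ and invoking determinacy of the Hausdorff moment problem (legitimate because of the uniform $L^\infty$-bound), the push-forward measures $(\rho_{\varphi_j})_\ast\,ds$ converge weakly to $(\rho_\varphi)_\ast\,ds$, which for uniformly bounded decreasing functions is equivalent to pointwise convergence at continuity points of $\rho_\varphi$. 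The uniform-convergence hypothesis is handled in the same way, using that uniform convergence of endpoints propagates to uniform convergence of the geodesics while preserving the $\pa\op$-bound.

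\textbf{Main obstacle.} The delicate point is the apparent circularity in extending the action identity to $\cH^{1\bar 1}$-endpoints: the identity is what drives the proof of Lemma 3.3, yet its extension seems to require Lemma 3.3 for the approximating $\cH$-data. The remedy is to first prove Lemma 3.3 in the special case where all four of $\varphi(a),\varphi(b),\varphi_j(a),\varphi_j(b)$ lie in $\cH$---where (2.11) applies directly and Lemma 2.7 works verbatim---and then to bootstrap to $\cH^{1\bar 1}$-endpoints using Lemma 3.2(a)'s monotonicity of the rise under monotone perturbations of the endpoints, exactly in the spirit of Darvas's continuity proofs for the $d_\chi$ metrics.
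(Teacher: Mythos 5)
Your approach is genuinely different from the paper's, and it contains a real gap. The paper proves this lemma directly: for fixed $t\in(a,b)$, convexity in $t$ together with the conservation law (Lemma 2.3) forces $\partial^-\varphi(t)=\partial^+\varphi(t)$ $\mu_{\varphi(t)}$--a.e., one then shows $\partial^+\varphi_j(t)\to\dot\varphi(t)$ $\mu_{\varphi(t)}$--a.e.\ by elementary convexity estimates, and passes to decreasing rearrangements using hereditary tightness of the measures $\mu_{\varphi_j(t)}$ (which is where the uniform bounds on $\partial\op\varphi_j(a),\partial\op\varphi_j(b)$ and He's estimates enter) and \cite[Lemma 3.6]{L2}. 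Crucially, the action identity of Lemma 3.4 is deduced \emph{from} the present lemma in the paper, not the other way around. Your plan inverts this dependency, and the circularity you flag in your final paragraph is not resolved by your remedy.

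Two concrete problems. First, your all--$\cH$ special case only covers limit geodesics whose endpoints lie in $\cH$; the bootstrap to limits in $\cH^{1\bar 1}\setminus\cH$ via Lemma 3.2(a) does not close. The natural sandwich $\rho[u_j,v]\le\rho[u_j,v_j]\le\rho[u,v_j]$ pinches down to $\rho[u,v]$ only if you already know that the one--variable limits $\lim_j\rho[u_j,v]$ and $\lim_j\rho[u,v_j]$ equal $\rho[u,v]$ --- and that is again an instance of the lemma with an $\cH^{1\bar 1}$ endpoint, for which the identity (2.11) is unavailable because the geodesic need not be $C^1$ as a map into $B(X)$. Second, and more fundamentally, even where the moment argument runs it shows only that the approximating rises converge to \emph{some} canonical decreasing function $r$ satisfying $\int_0^V f(r)=\cL_{b-a}\big(\varphi(a),\varphi(b)\big)/(b-a)$ for your test Lagrangians; nothing in the argument identifies $r$ with $\big(\partial^\pm_t\varphi(t)\big)^\star$, which is how $\rho_\varphi$ is \emph{defined} in Definition 3.1. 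That identification is exactly the content of the paper's direct velocity argument and cannot be extracted from Lemma 2.7 alone; what your method proves is closer to the existence and approximation--independence statement of Theorem 5.1 than to Lemma 3.3. A smaller, repairable point: the moments $\int_0^V|\rho_{\varphi_j}|^p$ determine only the distribution of $|\rho_\varphi|$, not of the signed function $\rho_\varphi$; you would need instead, say, $f(x)=\big((x+M)_+\big)^p$ with $M$ a uniform velocity bound, or $f_c(x)=(x-c)_+$ for all $c\in\bR$.
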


\begin{proof}
Consider the case when $\varphi_j(a), \varphi_j(b)$ decrease. As $j\to\infty$, by \cite[Proposition 3.15]{Da3} 
$\varphi_j(t)$ decrease to $\varphi(t)$ for every $t\in[a,b]$. Further, as explained in the proof of \cite[Lemma 11.2]{L2}, 
it follows from He's work \cite{H} that the family $\mu_{\varphi_j(t)}$, $\mu_{\varphi(t)}$, $j\in\bN$, $t\in[a,b]$, of measures is 
hereditarily tight, in the sense that given an open $U\subset X$ and $\varepsilon>0$, there exists a compact $K\subset U$ such 
that 
$\mu_{\varphi_j(t)}(U\setminus K)$, $\mu_{\varphi(t)}(U\setminus K)<\varepsilon$. 

Fix $t\in(a,b)$. By convexity $\partial^-\varphi(t)\le\partial^+\varphi(t)$ on $X_j$. Since the two functions are equidistributed, it follows that $\partial^-\varphi(t)=\partial^+\varphi(t)$ $\mu_{\varphi(t)}$--almost everywhere. 
In other words, for $\mu_{\varphi(t)}$--almost every $x\in X$ the function $\varphi(\cdot)(x)$ is differentiable at 
$t$. For brevity, we denote the derivative $\dot\varphi(t)(x)$. It follows that $\partial^+\varphi_j(t)(x)\to \dot\varphi(t)(x)$ at such $x$. Indeed, given $\varepsilon>0$, choose $s>0$ so that
\begin{equation}      
\Big|\frac{\varphi(t\pm s)(x)-\varphi(t)(x)}{\pm s}-\dot\varphi(t)(x)\Big|<\varepsilon.
\end{equation}
By convexity
\[
\frac{\varphi_j(t- s)(x)-\varphi_j(t)(x)}{- s}\le\partial^+ \varphi_j(t)(x)\le \frac{\varphi_j(t+ s)(x)-\varphi_j(t)(x)}{s}.
\]
As $j\to\infty$, the upper and lower bounds of $\partial^+ \varphi_j(t)(x)$ tend to $\big(\varphi(t\pm s)(x)-\varphi(t)(x)\big)/\pm s$. 
Hence $\partial^+ \varphi_j(t)(x)\to\dot\varphi(t)(x)$ by (3.3), i.e., $\partial^+ \varphi_j(t)\to\dot\varphi(t)$ $\mu_{\varphi(t)}$--a.e. on $X$..

This and hereditary tightness imply by \cite[Lemma 3.6]{L2}
\[
\rho_{\varphi_j}=(\partial^+ \varphi_j(t))^{\star \varphi_j(t)}\to \dot\varphi(t)^{\star \varphi(t)}=\rho_\varphi,\qquad j\to\infty,
\]
away from a countable subset of $(0, V)$. As the functions involved are monotone, the exceptional set will be included in the set of discontinuity of $\rho_\varphi$, as claimed.

The case of uniformly convergent $\varphi_j(a), \varphi_j(b)$ can be reduced in a standard way to the decreasing case, see for example the last paragraph of the proof of \cite[Lemma 3.4]{L2}.
\end{proof}

\begin{lem}          
Suppose $L:T^\infty\cE\to\bR$ is an invariant, convex, strongly continuous Lagrangian. If $T\in (0,\infty)$ and $u,v\in\cH^{1\bar 1}$, then (cf. (2.10), (2.12))
\[\cL_T(u,v)=TL_\star (\rho[u,v]/T).\]
\end{lem}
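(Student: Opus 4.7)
The plan is to reduce to the case of endpoints in $\cH$, where the connecting geodesic is $C^1$ and $(2.11)$ applies directly, and then pass to the limit using the approximation lemmas already at hand. First, invoking the B{\l}ocki--Ko{\l}odziej approximation cited after Theorem 1.2, I would choose $u_j, v_j \in \cH$ decreasing to $u, v$ with the $(1,1)$ forms $\pa\op u_j$ and $\pa\op v_j$ uniformly bounded. The Chen--B{\l}ocki theorem recalled in Section 2 then guarantees that the geodesic $\psi_j:[0,T] \to \cH^{1\bar 1}$ joining $u_j$ and $v_j$ is of class $C^1$ as a map into $C(X)$, so $(2.11)$ gives
\[
\cL_T(u_j,v_j)=\int_0^T L(\pa_t\psi_j(t))\,dt.
\]

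By Lemma 2.3, the decreasing rearrangement of $\pa_t\psi_j(t)$ on $(X,\mu_{\psi_j(t)})$ equals $\rho_{\psi_j}$ for every $t\in[0,T]$, and by (3.1) this rearrangement is $\rho[u_j,v_j]/T$. Strict rearrangement invariance of $L$ thus turns the integrand above into the constant $L_\star\bigl(\rho[u_j,v_j]/T\bigr)$, yielding
\[
\cL_T(u_j,v_j)=T\,L_\star\bigl(\rho[u_j,v_j]/T\bigr).
\]
Now I would let $j\to\infty$. The left side converges to $\cL_T(u,v)$ by Lemma 2.7. For the right side, Lemma 3.4 (whose hypotheses hold by our choice of $u_j, v_j$) gives $\rho[u_j,v_j]\to\rho[u,v]$ Lebesgue-a.e.\ on $(0,V)$, and Lemma 2.8 then upgrades this to $L_\star(\rho[u_j,v_j]/T)\to L_\star(\rho[u,v]/T)$, \emph{provided} the sequence is uniformly bounded in $L^\infty(0,V)$.

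The one nontrivial technical point, which I expect to be the main obstacle, is exactly this uniform $L^\infty$ control on the rises. I would establish it as follows. Set $M=\sup_j\|v_j-u_j\|_{C(X)}<\infty$, which is finite because both sequences are monotone and uniformly bounded. Convexity of $\psi_j(\cdot)(x)$ on $[0,T]$ (together with $\psi_j(0)=u_j$, $\psi_j(T)=v_j$) gives
\[
\pa^+\psi_j(0)(x)\le\frac{v_j(x)-u_j(x)}{T}\le\pa^-\psi_j(T)(x),\qquad x\in X.
\]
Thus the essential sup of $\pa^+\psi_j(0)$ on $(X,\mu_{u_j})$ is at most $M/T$ and the essential inf of $\pa^-\psi_j(T)$ on $(X,\mu_{v_j})$ is at least $-M/T$; by Lemma 2.3 and $(3.1)$ these coincide with the sup and inf of $\rho[u_j,v_j]/T$ on $(0,V)$, whence $\|\rho[u_j,v_j]\|_\infty\le M$. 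Passing to the limit in the displayed identity then completes the proof.
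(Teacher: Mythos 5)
Your proof is correct and follows essentially the same route as the paper: reduce to endpoints in $\cH$, where the connecting geodesic is $C^1$ and (2.11) applies, identify the integrand as the constant $L_\star(\rho[u_j,v_j]/T)$ via Lemma 2.3 and (3.1), and pass to the limit using Lemmas 2.7, 3.3 (which you miscite as ``Lemma 3.4''---that is the statement being proved) and 2.8. The one inaccuracy is the approximation step: B{\l}ocki--Ko{\l}odziej provides decreasing smooth approximants but no uniform bound on $\pa\op u_j$, while the uniform Hessian bound is exactly what Lemma 3.3 needs; the paper instead builds $u_j,v_j\in\cH$ with uniformly bounded $\pa\op u_j,\pa\op v_j$ that converge \emph{uniformly} (by smoothing $(1-1/j)u$), and since Lemmas 2.7 and 3.3 accept uniform convergence just as well as decrease, your argument goes through verbatim after that substitution. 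Your uniform $L^\infty$ control on $\rho[u_j,v_j]$ via the sandwich $\partial^+\psi_j(0)\le(v_j-u_j)/T\le\partial^-\psi_j(T)$ is sound and is a clean alternative to the paper's appeal to the monotonicity Lemma 3.2 for the same purpose.
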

\begin{proof}
First assume $u,v\in\cH$, so that the geodesic $\psi:[0,T]\to\cH^{1\bar 1}$ between them is $C^1$, viewed as a map into $B(X)$. By (2.11), (2.12), and (3.1)
\[
\cL_T(u,v)=\int^T_0L(\dot\psi(t))\,dt=\int^T_0 L_\star(\rho_\psi)\,dt=TL_\star(\rho[u,v]/T).
\]

For general $u,v\in \cH^{1\bar 1}$ it is easy to find sequences $u_j, v_j\in\cH$ with $\partial\op u_j,\partial\op v_j$ uniformly
bounded, that converge uniformly to $u,v$. (One
approximates, say, $(1-1/j)u$ by convolutions on coordinate neighborhoods, then patches together the local approximants 
by a smooth partition of unity.)
By Lemmas 2.7, 3.2, 3.3, and 2.8, and by dominated convergence
\[\cL_T(u,v)=\lim_j\cL_T(u_j, v_j)=\lim_j TL_\star(\rho[u_j, v_j]/T)=TL_\star(\rho[u,v]/T).\]
\end{proof}

\section{Envelopes and rise in $\cH^{1\bar 1}$}    

Various spaces of $\omega$--plurisubharmonic functions on $(X,\omega)$ form lattices. We will use $\vee,\wedge$ to denote the lattice operations, envelopes. In general, if $u,v\in\psh(X,\omega)$, we define $u\vee v, u\wedge v: X\to[-\infty,\infty)$ by
\begin{equation}\begin{gathered}       
(u\vee v)(x)=\max(u(x), v(x))\, \quad\text{ and }\\
(u\wedge v)(x)=\sup\{w(x): w\in\psh(X,\omega), \ w\le u, v\}.
\end{gathered} \end{equation}   
Thus $u\vee v\in\psh(X,\omega)$ and, according to  \cite[Section 2.4]{Da3}, $u\wedge v\in\psh(X,\omega)$ or $\equiv -\infty$. 
Darvas proves that if $u,v\in\cE$, then $u\vee v, u\wedge v\in\cE$ \cite[Corollaries 2.7, 3.5]{Da2}. 
Also, if 
$u,v\in\cH^{1\bar 1}$, then $u\wedge v\in\cH^{1\bar 1}$ by \cite[Theorem 2.5]{DR}. Darvas and Rubinstein write $P(u,v)$ for what is 
denoted $u\wedge v$ here. That $\wedge$ is used both for envelope of functions and for exterior product of forms might 
confuse the careless reader; but since these two occur in different contexts, and $u\wedge v$ is rather simpler than $P(u,v)$, 
adopting it is worth the risk.

\begin{lem}      
Let $u,v\in\cH^{1\bar 1}$, and $\varphi,\psi,\theta:[a,b]\to \cH^{1\bar 1}$ the geodesics joining $u$ with $u\wedge v$, $u\wedge v$ with $v$, and $u$ with $v$. Then
\begin{equation}      
\partial^+\varphi(a)=\min \big(0, \partial^+\theta(a)\big)\quad \text{and}\quad  \partial^-\psi(b)=\max \big(0, \partial^-\theta(b)\big).
\end{equation}
Hence $\rho_\varphi=\min(0,\rho_\theta)$,  $\rho_\psi=\max(0,\rho_\theta)$, and
\[
\rho[u,u\wedge v]+\rho[u\wedge v, v]=\rho[u,v].
\]
\end{lem}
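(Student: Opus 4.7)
The plan is to establish the pointwise (in practice, $\mu_u$--a.e.) identity $\partial^+\varphi(a)=\min(0,\partial^+\theta(a))$ first, and then deduce everything else from it by formal manipulation of decreasing rearrangements.

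I start with the easy upper bound. By the Perron definition of $\varphi$, any competing subgeodesic $\gamma:[a,b]\to\psh(X,\omega)$ with $\gamma(a)\le u$ and $\gamma(b)\le u\wedge v$ is convex in $t$, so $\gamma(t)\le\max(\gamma(a),\gamma(b))\le u$; taking the sup yields $\varphi(t)\le u$, hence $\partial^+\varphi(a)\le 0$. The same $\gamma$ also satisfies $\gamma(a)\le u=\theta(a)$ and $\gamma(b)\le u\wedge v\le v=\theta(b)$, so it competes in the Perron problem defining $\theta$, giving $\gamma\le\theta$ and then $\varphi\le\theta$, whence $\partial^+\varphi(a)\le\partial^+\theta(a)$. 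Combining, $\partial^+\varphi(a)\le\min(0,\partial^+\theta(a))$ everywhere on $X$.

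The reverse inequality is where the real work lies, and this is the expected main obstacle. A useful reformulation is that $\varphi$ corresponds to the $\pi^\ast\omega$--plurisubharmonic envelope $\Phi$ on the strip $S_{ab}\times X$ of the pointwise minimum of $\Theta$ and the constant--in--$s$ function $\tilde u(s,x):=u(x)$: indeed any $\pi^\ast\omega$--psh $\Gamma$ with $\Gamma\le\tilde u,\Theta$ has $\Gamma(b,\cdot)$ an $\omega$--plurisubharmonic function $\le u,v$, hence $\le u\wedge v$, so the two Perron problems coincide. Near $\re s=a$, convexity in $t$ gives $\min(\tilde u,\Theta)(t,x)=u(x)+(t-a)\min(0,\partial^+\theta(a)(x))+o_x(t-a)$, and the technical crux is to show that the envelope $\Phi$ inherits this first--order behavior $\mu_u$--a.e. in $x$. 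My approach would be to approximate $u,v$ by smooth $u_j,v_j\in\cH$ with uniformly bounded $\partial\overline\partial$ (via the mollification plus partition of unity construction used in the proof of Lemma 3.4), handle the approximating sequence where $\theta$ is $C^{1,1}$ on the strip by Chen--B{\l}ocki and $u_j\wedge v_j\in\cH^{1\bar 1}$ with bounded Hessian by Darvas--Rubinstein, and pass to the limit of rises via Lemma 3.3 to obtain $\rho_\varphi=\min(0,\rho_\theta)$ in full generality.

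Given the rise identity $\rho_\varphi=\min(0,\rho_\theta)=(\min(0,\partial^+\theta(a)))^{\star u}$, where the last equality holds because $r\mapsto\min(0,r)$ is continuous and nondecreasing and so commutes with decreasing rearrangement, the bounded functions $\partial^+\varphi(a)$ and $\min(0,\partial^+\theta(a))$ on $(X,\mu_u)$ are equidistributed with one pointwise dominated by the other, hence must agree $\mu_u$--a.e. The analogous identity for $\psi$ at $t=b$ follows by parameter reversal: $t\mapsto\psi(a+b-t)$ is the geodesic from $v$ to $v\wedge u$, so the identity just proved applies after swapping $u$ and $v$, converting the right derivative at $a$ into minus the left derivative at $b$ and the $\min$ into a $\max$. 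Taking decreasing rearrangements then gives $\rho_\varphi+\rho_\psi=\min(0,\rho_\theta)+\max(0,\rho_\theta)=\rho_\theta$ on $(0,V)$, which is exactly $\rho[u,u\wedge v]+\rho[u\wedge v,v]=\rho[u,v]$, via the trivial pointwise identity $\min(0,r)+\max(0,r)=r$.
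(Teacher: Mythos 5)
Your framing is sound in several places: the upper bound $\partial^+\varphi(a)\le\min\big(0,\partial^+\theta(a)\big)$ via comparison of Perron families is correct, the identification of $\varphi$ with the rooftop envelope of $\min(\tilde u,\Theta)$ on the strip is the right object to look at, the reduction of the $\psi$--identity to the $\varphi$--identity by reversing the parameter is exactly what the paper does, and the final passage from (4.2) to the rearrangement identities is routine (using Lemma 2.3 to identify the two expressions for $\rho_\theta$ at $t=a$ and $t=b$). But the proof has a genuine gap at its center: the reverse inequality $\partial^+\varphi(a)\ge\min\big(0,\partial^+\theta(a)\big)$ is never actually established. You correctly identify it as ``the technical crux,'' but the plan you offer for it --- approximate by smooth data, invoke Chen--B{\l}ocki regularity of $\theta$ and the Darvas--Rubinstein bound on $\partial\op(u\wedge v)$, then pass to the limit by Lemma 3.3 --- does not contain an argument for the crux even in the smooth case. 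Regularity of $\theta$ and membership of $u_j\wedge v_j$ in $\cH^{1\bar 1}$ tell you nothing about why the envelope $\Phi$ cannot peel away from $\min(\tilde u,\Theta)$ at first order in $t-a$ on a set of positive $\mu_u$--measure; that lower bound on the initial velocity of an envelope-type geodesic is a nontrivial fact, and the approximation scheme only reduces the general case to a base case you have not proved.

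The paper closes exactly this gap by citing \cite[Lemma 3.17]{Da3}, which identifies the superlevel sets of the initial velocity of the geodesic from $u$ to $v$ with contact sets of rooftop envelopes: $\big(\partial^+\theta(a)\ge\tau\big)=\big(u\wedge(v-\tau)=u\big)$ for every $\tau\in\bR$. Applying this once to $\theta$ and once to $\varphi$ (i.e.\ with $v$ replaced by $u\wedge v$), and simplifying $u\wedge(u-\tau)\wedge(v-\tau)$ according to the sign of $\tau$, gives equality of all superlevel sets of $\partial^+\varphi(a)$ and of $\min\big(0,\partial^+\theta(a)\big)$, hence the first formula in (4.2) everywhere --- no approximation or a.e.\ identification from equidistribution is needed. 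If you want to complete your argument without that citation, you must supply a proof of the lower bound on $\partial^+\varphi(a)$ (equivalently, a first-order expansion of the rooftop envelope near $\re s=a$), which is essentially a Kiselman-minimum-principle/Legendre-transform analysis and is the real content of the lemma; as written, your proposal assumes the conclusion for regular data and only proves the easy half plus the formal consequences.
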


The lemma corresponds to Darvas's Pythagorean Theorem, \cite[Proposition 4.13]{Da1}, \cite[Proposition 8.1]{Da2}.
\begin{proof}
We show that two functions are equal by checking that their level sets coincide. By \cite[Lemma 3.17]{Da3}, for any $\tau\in \bR$
\begin{equation}     
\big(\partial^+\theta(a)\ge\tau\big)=\big(u\wedge(v-\tau)=u\big),
\end{equation}
and replacing $v$ by $u\wedge v$
\[
\big(\partial^+\varphi(a)\ge\tau\big)=\big(u\wedge(u\wedge v-\tau)=u\big)=\big(u\wedge(u-\tau)\wedge(v-\tau)=u\big).
\]
This latter set is $\big(u\wedge(v-\tau)=u\big)$ if $\tau\le 0$, and empty otherwise.
Comparison with (4.3) gives the first formula in (4.2). The second follows by applying what we proved but with the roles of $u,v$ interchanged. The rest of the claim is immediate from (4.2).
\end{proof}
Next we look at how enveloping interacts with rise. The result corresponds to \cite[Proposition 8.2]{Da2}.

\begin{lem}         
Let $u,v,w\in\cH^{1\bar 1}$. If $u\le v$ then $\rho[u\wedge w, v\wedge w]\le\rho[u,v]$; if $u\ge v$ then
$\rho[u\wedge w, v\wedge w]\ge\rho[u,v]$.
\end{lem}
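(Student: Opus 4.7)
The plan is to reduce both inequalities to a single application of the Pythagorean identity (Lemma 4.1) combined with the monotonicity of the rise in its arguments (Lemma 3.2(a)). The first observation I would record is a sign fact: applying Lemma 4.1 to $(u,v)$ itself, when $u \le v$ one has $u \wedge v = u$, so the geodesic from $u \wedge v = u$ to $v$ coincides with the geodesic from $u$ to $v$, and the formula $\rho_\psi = \max(0, \rho_\theta)$ forces $\rho[u,v] \ge 0$. Symmetrically, $u \ge v$ forces $\rho[u,v] \le 0$. This sign information is exactly what lets the outer $\max$ or $\min$ collapse at the end of each case.

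For the case $u \le v$, the plan is to apply Lemma 4.1 not to $(u,v)$ but to the asymmetric pair $(u, v \wedge w)$. Since $u \le v$ gives $u \wedge v = u$, the envelope simplifies: $u \wedge (v \wedge w) = (u \wedge v) \wedge w = u \wedge w$. The ``right half'' of Lemma 4.1 then reads
\[
\rho[u \wedge w,\, v \wedge w] = \max\bigl(0,\, \rho[u,\, v \wedge w]\bigr).
\]
Lemma 3.2(a) gives $\rho[u, v \wedge w] \le \rho[u, v]$ because $v \wedge w \le v$. Taking $\max(0,\cdot)$ on both sides and invoking $\rho[u,v] \ge 0$ from the sign observation yields $\rho[u\wedge w, v \wedge w] \le \rho[u,v]$.

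For the case $u \ge v$, the plan is symmetric: apply Lemma 4.1 to $(u \wedge w,\, v)$. Now $v \le u$ gives $(u \wedge w) \wedge v = u \wedge w \wedge v = v \wedge w$, so the ``left half'' of Lemma 4.1 reads
\[
\rho[u \wedge w,\, v \wedge w] = \min\bigl(0,\, \rho[u \wedge w,\, v]\bigr).
\]
The second inequality in Lemma 3.2(a), applied to $u \wedge w \le u$, gives $\rho[u \wedge w, v] \ge \rho[u, v]$. Taking $\min(0,\cdot)$ and using $\rho[u,v] \le 0$ closes the argument.

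The only genuine obstacle is spotting the right asymmetric pair. The symmetric choices $(u,v)$ and $(u \wedge w,\, v \wedge w)$ lead nowhere, because neither $u \le v$ alone nor $u \ge v$ alone collapses the envelope of the already--enveloped pair. The trick is to envelope only one side of the pair before invoking the Pythagorean identity, and to let the hypothesis $u \le v$ (resp.\ $u \ge v$) perform the collapse via the associativity $u \wedge (v \wedge w) = (u \wedge v) \wedge w$. After that insight, the argument is a mechanical combination of Lemmas 4.1 and 3.2(a).
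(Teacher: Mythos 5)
Your argument is correct, but it takes a genuinely different route from the paper. The paper proves the inequality at the level of distribution functions: it uses \cite[Lemma 3.17]{Da3} to identify the level set $\big(\partial^+\psi(0)\ge\tau\big)$ with a contact set of envelopes, then invokes the partition formula (4.6) for $\mu_{u\wedge w}$ to bound $\mu_{u\wedge w}\big(\partial^+\psi(0)\ge\tau\big)$ by $\mu_u\big(\partial^+\varphi(0)\ge\tau\big)$ directly, mirroring Darvas's proof of \cite[Proposition 8.2]{Da2}. You instead deduce the statement \emph{formally} from results already on record: the Pythagorean identity of Lemma 4.1, applied to the asymmetric pairs $(u,\,v\wedge w)$ and $(u\wedge w,\,v)$, together with the monotonicity of Lemma 3.2(a) and the sign observation that $u\le v$ forces $\rho[u,v]\ge 0$ (and $u\ge v$ forces $\rho[u,v]\le 0$). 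All the ingredients check out: the lattice associativity $u\wedge(v\wedge w)=(u\wedge v)\wedge w$ is immediate from the definition (4.1), the collapse $u\wedge(v\wedge w)=u\wedge w$ under $u\le v$ is correct, the relevant envelopes stay in $\cH^{1\bar 1}$ by \cite[Theorem 2.5]{DR}, and there is no circularity since Lemma 4.1 precedes this statement. What your route buys is economy and a conceptual point: the interaction of enveloping with the rise is already encoded in the Pythagorean identity, so no new appeal to the contact-set lemma or to the partition of $\mu_{u\wedge w}$ is needed. What the paper's route buys is a self-contained measure-theoretic argument that stays close to Darvas's original and exhibits explicitly which sets carry the mass being compared; but as a proof of this particular lemma, yours is a clean simplification.
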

\begin{proof}
Assume  $u\le v$, and let $\varphi,\psi:[0, 1]\to\cH^{1\bar 1}$ be the geodesics between $u=u\wedge v$ and $v$, respectively $u\wedge w$ and $v\wedge w$. Lemma 4.1 implies $\rho_\varphi\ge 0$, and similarly $\rho_\psi\ge 0$. Using again 
\cite[Lemma 3.17]{Da3}, for $\tau\in \bR$
\begin{gather}    
\mu_u\big(\partial^+\varphi(0)\ge\tau\big)=\mu_u\big(u\wedge(v-\tau)=u\big),\\ 
\mu_{u\wedge w}\big(\partial^+\psi(0)\ge\tau\big)=\mu_{u\wedge w}\big(u\wedge w\wedge(v\wedge w-\tau)=u\wedge w\big).
\end{gather}
If $\tau\le 0$, both measures equal $V$. If $\tau>0$ then 
$w\wedge(v\wedge w-\tau)=w\wedge(v-\tau)\wedge(w-\tau)=(v-\tau)\wedge(w-\tau)$, and so the set on the right of (4.5) is 
$\big(u\wedge(v\wedge w-\tau)=u\wedge w\big)$. According to Darvas 
\cite[Proposition 2.2]{Da2}, for Borel sets 
$Y\subset X$
\begin{equation}  
\mu_{u\wedge w}(Y)=\mu_u\big(Y\cap(u=u\wedge w)\big)+\mu_w\big(Y\cap(u>u\wedge w=w)\big).
\end{equation}
In light of (4.5), then
\begin{multline}    
\mu_{u\wedge w}\big(\partial^+ \psi(0)\ge\tau\big) \le
\mu_u\big(u\wedge(v\wedge w-\tau) =u\wedge w=u\big)\\
+\mu_w\big(u\wedge(v\wedge w-\tau)=u\wedge w=w\big).
\end{multline}       
Since $v\wedge w-\tau < w$, the second term on the right is 0. Furthermore
\[u\wedge(v\wedge w-\tau)\le u\wedge(v-\tau)\le u,\]
whence the first term on the right of (4.7) is $\le\mu_u(u\wedge(v-\tau)=u)$. Comparing (4.4) and (4.7) we 
therefore obtain 
$\mu_{u\wedge w}(\partial^+\psi (0)\ge \tau)\le \mu_u(\partial^+\varphi(0)\ge \tau)$, or
 \[\rho[u\wedge w, v\wedge w]=(\partial^+\psi(0))^\star\le (\partial^+\varphi(0))^\star=\rho[u,v].\] 
 A similar argument gives the lemma when $u\ge v$. 
 \end{proof}

Both lemmas support the view of $\rho[u, v]$ as measuring some distance between $u,v$. The question 
arises if $\rho$ satisfies the triangle inequality. It is easy to see that $\rho(u,v)+\rho(v,w)\ge\rho(u,w)$ 
will not hold for a general triple $u,v,w\in\cH$. Indeed, if it did, it would also hold for the triple $w,v,u$. But in 
light of (3.2), the two triangle inequalities would simply imply $\rho(u,v)+\rho(v,w)=\rho(u,w)$, something that fails if $u=w$ but $\rho(u,v)$ is not a constant.

However, the triangle inequality does hold in an integrated form:

\begin{lem}       
If $u,v,w\in\cH^{1\bar 1}$ and $\lambda\in[0,V]$, then 
\begin{equation}     
\int^\lambda_0\rho[u,v]+\int^\lambda_0\rho[v,w]\ge \int^\lambda_0\rho[u,w].
\end{equation}
\end{lem}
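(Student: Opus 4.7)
The plan is to realize both sides of (4.8) as values of the action $\cL_T$ for a carefully chosen Lagrangian, and then deduce the inequality by concatenating paths. The key feature is that the Lagrangian will be positively homogeneous of degree one in velocity, making the action independent of $T$; this means the doubling of the time interval produced by concatenation will cost nothing in the bookkeeping.

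Assume first that $\lambda\in(0,V)$; the case $\lambda=0$ is vacuous, and $\lambda=V$ works identically with $f\equiv 1$ in place of $\chi_{(0,\lambda)}$ below. Apply Lemma 2.4 with the decreasing, integrable weight $f=\chi_{(0,\lambda)}$ to obtain the invariant, convex, strongly continuous Lagrangian
\[
L(\xi)=\int_0^\lambda \xi^{\star u},\qquad \xi\in T_u^\infty\cE.
\]
By (2.12) the associated function $L_\star\colon B(0,V)\to\bR$ sends $\zeta\mapsto \int_0^\lambda \zeta^\star$. The rise $\rho[u,v]\colon(0,V)\to\bR$ is decreasing and usc, so it coincides with its own decreasing rearrangement, whence $L_\star(\rho[u,v]/T)=\tfrac1T\int_0^\lambda\rho[u,v]$. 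Lemma 3.4 therefore yields
\[
\cL_T(u,v)=T\,L_\star(\rho[u,v]/T)=\int_0^\lambda \rho[u,v]
\]
for every $T>0$; in particular $\cL_1(u,v)=\cL_2(u,v)=\int_0^\lambda\rho[u,v]$, and the analogous identities hold for $(v,w)$ and $(u,w)$.

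Next I concatenate paths. Given piecewise $C^1$ paths $\varphi_1\colon[0,1]\to\cE^\infty$ from $u$ to $v$ and $\varphi_2\colon[0,1]\to\cE^\infty$ from $v$ to $w$, define $\varphi\colon[0,2]\to\cE^\infty$ by $\varphi(t)=\varphi_1(t)$ on $[0,1]$ and $\varphi(t)=\varphi_2(t-1)$ on $[1,2]$. This $\varphi$ is piecewise $C^1$ into $B(X)$, joins $u$ to $w$, and satisfies
\[
\int_0^2 L(\partial_t\varphi)\,dt=\int_0^1 L(\partial_t\varphi_1)\,dt+\int_0^1 L(\partial_t\varphi_2)\,dt.
\]
Taking infima over $\varphi_1$ and $\varphi_2$ gives $\cL_2(u,w)\le \cL_1(u,v)+\cL_1(v,w)$. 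Combined with the identity established in the previous step, this is exactly (4.8).

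No step here is truly hard, because the machinery of Lemmas 2.4 and 3.4 does most of the work. The one point that deserves care is the homogeneity calculation $L_\star(\rho[u,v]/T)=(1/T)\int_0^\lambda\rho[u,v]$, which hinges on $\rho[u,v]$ already being decreasing and usc, so that its decreasing rearrangement equals itself. This is what forces $\cL_T$ to be $T$-independent and lets the concatenation inequality $\cL_2\le\cL_1+\cL_1$ collapse into the desired triangle inequality for the integrals of the rises.
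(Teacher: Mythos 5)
Your proposal is correct and follows essentially the same route as the paper: both realize $\int_0^\lambda\rho[\cdot,\cdot]$ as the action $\cL_T$ for the Lagrangian $L(\xi)=\int_0^\lambda\xi^{\star u}$ of Lemma 2.5, invoke Lemma 3.4, and conclude by concatenating paths over $[0,2]$ (the paper phrases the last step as an application of Theorem 2.6 to the concatenated geodesic after reducing to $u,v,w\in\cH$, while you use the infimum definition (2.10) of $\cL_T$ directly, which is a cosmetic difference). Only a trivial slip: the construction of the Lagrangian from a decreasing integrable weight is Lemma 2.5, not 2.4.
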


\begin{proof}
The proof will exhibit (4.8) as an instance of the Principle of Least Action, Theorem 2.6. In view of Lemma 3.2 it suffices to prove 
when $u,v,w\in\cH$. Let $\varphi:[0,2]\to\cH^{1\bar 1}$ be such that $\varphi|[0,1]$ is the geodesic connecting $u,v$ and $\varphi |[1,2]$ is the geodesic connecting $v,w$. Let $\psi:[0,2]\to\cH^{1\bar 1}$ be the geodesic connecting $u,w$. For Lagrangians $L$ as in Theorem 2.6 we have 
$\int^2_0 L(\partial_t\varphi(t))\,dt\ge \int^2_0 L(\partial_t\psi(t))\,dt$, or
\begin{equation}     
L_\star(\rho[u,v])+L_\star(\rho[v,w])\ge 2L_\star(\rho[u,w]/2)
\end{equation}
by (2.11) and Lemma 3.4. We apply this with the Lagrangian
\[
L(\xi)=\int^\lambda_0\xi^\star=\int^V_0 f\xi^\star, \quad \xi\in T^\infty\cE,
\]
where $f=1_{(0,\lambda)}$ is the characteristic function of $(0,\lambda)$, cf. Lemma 2.5. Since $L_\star(\xi^\star)=\int^\lambda_0\xi^\star$, (4.9) reduces to (4.8).
\end{proof}

If we introduce a partial order $\succeq$ on decreasing integrable functions on $(0,V)$ whereby
$f\succeq g$ means $\int_0^\lambda f\ge\int_0^\lambda g$ for $0<\lambda<V$ (known in harmonic analysis
as the Hardy--Littlewood--P\'olya relation), then (4.8) becomes $\rho[u,v]+\rho[v,w]\succeq\rho[u,w]$.

\section{The rise in $\cE$}     
We are ready to extend the notion of rise to geodesics $\varphi$ and functions $u,v$ in $\cE$.

\begin{thm}      
If sequences $u_j$, $v_j\in\cH^{1\bar 1}$ decrease to $u,v\in\cE$, then the functions $\rho[u_j, v_j]: (0, V)\to\bR$ converge a.e. to a decreasing usc function $r:(0,V)\to\bR$. The function $r$ depends on $u,v$ but not on the approximating sequences $u_j, v_j$. 
\end{thm}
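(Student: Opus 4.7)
The plan is to show that for each $\lambda\in(0,V)$ the sequence $j\mapsto \int_0^\lambda \rho[u_j,v_j]$ is Cauchy; a.e.\ convergence of the decreasing usc functions $\rho[u_j,v_j]$ to a common limit $r$ then follows from Helly's selection principle and the fact that a decreasing function is determined a.e.\ by its integrals on initial intervals.

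To establish the Cauchy property, I would apply Lemma 4.3 with the intermediate points $u_k$ and $v_j$: for $j\le k$,
\[
\int_0^\lambda \rho[u_j,v_j]\le \int_0^\lambda \rho[u_j,u_k]+\int_0^\lambda \rho[u_k,v_k]+\int_0^\lambda \rho[v_k,v_j],
\]
together with the reversed inequality obtained by swapping $j,k$. The key algebraic simplification is a sign analysis: since $u_k\le u_j$, the set $\{u_j\wedge(u_k-\tau)=u_j\}$ is empty for $\tau>0$, so by Darvas's Lemma~3.17 (used in the proof of Lemma~4.1) the right velocity $\partial^+\theta(0)$ of the geodesic $\theta$ from $u_j$ to $u_k$ is $\le 0$ $\mu_{u_j}$-a.e., whence $\rho[u_j,u_k]\le 0$; symmetrically $\rho[v_j,v_k]\le 0$. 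These constraints reduce the two bounds to
\[
-\int_0^\lambda \rho[u_k,u_j]\le \int_0^\lambda \rho[u_j,v_j]-\int_0^\lambda \rho[u_k,v_k]\le \int_0^\lambda \rho[v_k,v_j],
\]
with both bounding integrals non-negative.

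These bounds are controlled by convexity: for the geodesic $\theta:[0,1]\to\cH^{1\bar 1}$ from $u_k$ to $u_j$, $\partial^+\theta(0)(x)\le \theta(1)(x)-\theta(0)(x)=u_j(x)-u_k(x)$, and by monotonicity of rearrangement $\rho[u_k,u_j]\le (u_j-u_k)^{\star u_k}$, so
\[
0\le \int_0^\lambda \rho[u_k,u_j]\le \int_X (u_j-u_k)\,d\mu_{u_k};
\]
the $v$-term is analogous. The crux, which I expect to be the main obstacle, is showing $\int_X(u_j-u_k)\,d\mu_{u_k}\to 0$ as $j,k\to\infty$. When $u\in\cE^1$ this is controlled by continuity of Monge--Amp\`ere energy; in the general case $u\in\cE$, where $u$ may fail to be in $L^1(\mu_u)$, one has to use the full Monge--Amp\`ere mass condition together with continuity of non-pluripolar products under decreasing sequences, truncating $u$ on deep negative level sets and exploiting tightness of the measures $\mu_{u_k}$ in the style of the tightness argument recalled in the proof of Lemma~3.3.

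Once the Cauchy property is secured, $I(\lambda):=\lim_j\int_0^\lambda \rho[u_j,v_j]$ is well defined and concave in $\lambda$, so its right derivative is decreasing; its usc envelope defines $r$. Any subsequential a.e.\ limit of the decreasing functions $\rho[u_j,v_j]$ produced by Helly's theorem must integrate to $I$, and therefore agree with $r$ a.e., which gives a.e.\ convergence of the whole sequence. For independence of the approximating sequences, given another decreasing sequence $u_j'\searrow u$, $v_j'\searrow v$ in $\cH^{1\bar 1}$, form $\tilde u_j=u_j\wedge u_j'$, $\tilde v_j=v_j\wedge v_j'$: these stay in $\cH^{1\bar 1}$ by the Darvas--Rubinstein theorem cited in Section~4 and still decrease to $u,v$. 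The same triangle-inequality argument comparing $(u_j,v_j)$ and $(u_j',v_j')$ with $(\tilde u_j,\tilde v_j)$ (whose error terms tend to $0$ by the same crux estimate applied to $u_j,\tilde u_j$ and $v_j,\tilde v_j$) identifies their limiting integrals, so $r$ depends only on $u,v$.
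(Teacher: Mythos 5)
Your plan founders at its first step: the sequence $\int_0^\lambda\rho[u_j,v_j]$ is in general \emph{not} Cauchy, because the limiting integral $\int_0^\lambda\rho[u,v]$ can be $+\infty$ when $u\notin\cE^1$ (the paper warns of exactly this after Theorem 5.9). Concretely, take $v_j=v=0$ and $u\in\cE\setminus\cE^1$ with $u\le 0$. By Lemma 3.2a the functions $\rho[u_j,0]$ increase in $j$, and by (6.2), $(-u)^{\star u}(s)\le (n+1)\rho[u,0](s/e)$; since $\int_0^\lambda(-u)^{\star u}=\int_X(-u)\,d\mu_u-\int_\lambda^V(-u)^{\star u}=+\infty$ for such $u$, monotone convergence forces $\int_0^\lambda\rho[u_j,0]\nearrow+\infty$. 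For the same reason your ``crux'' estimate $\int_X(u_j-u_k)\,d\mu_{u_k}\to0$ is not merely hard but false: for fixed $j$ one has $\int_X\min(u_j-u_k,M)\,d\mu_{u_k}\to\int_X\min(u_j-u,M)\,d\mu_u$ as $k\to\infty$, and the right side tends to $\int_X(u_j-u)\,d\mu_u=+\infty$ as $M\to\infty$ when $u\notin L^1(\mu_u)$. No amount of truncation or tightness can rescue a statement that is false; at best your argument proves the theorem for $u,v\in\cE^1$, where the continuity of the Monge--Amp\`ere energy does control these error terms.

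This is precisely why the paper does not test $\rho[u_j,v_j]$ against the linear functional $\int_0^\lambda$. Instead it chooses, via \cite[Proposition 10.16]{GZ2}, a \emph{concave} diffeomorphism $\chi:[0,\infty)\to[0,\infty)$ with $u,v\in\cE_\chi$, so that the sublinear weight tames the blow-up of $\rho$ near $s=0$: Darvas's Lemma 5.3 gives convergence (hence boundedness) of $d_\chi(u_j,v_j)=\int_0^V\chi(|\rho[u_j,v_j]|)$, which yields the pointwise bounds needed for Helly, and then uniqueness of the subsequential limit is extracted by perturbing $\chi$ to $\chi+\varepsilon f$ and concluding that any two limits are equidistributed, hence equal as decreasing usc functions. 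If you want to keep the spirit of your approach, the fix is to replace $\int_0^\lambda\rho[u_j,v_j]$ by $\int_0^V\chi(|\rho[u_j,v_j]|)$ for a sufficiently rich family of concave $\chi$; but that is essentially the paper's proof. Your reduction of the general case to monotone configurations via $u_j\wedge v_j$ and Lemma 4.1, and the independence argument via $u_j\wedge u_j'$, are sound ideas that survive, but they do not address the main difficulty.
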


\begin{defn}      
We call the function $r$ above the rise between $u,v$, and denote it by $\rho[u,v]$. If $\psi:[a,b]\to\cE$ is a geodesic, we define its rise by
\[
\rho_\psi=\frac{1}{b-a}\rho[\psi(a),\psi(b)].
\]
\end{defn}
Thus both types of rise are decreasing usc function $(0, V)\to\bR$. Since if $u,v\in\cH^{1\bar 1}$, 
we can choose $u_j=u, v_j=v$, within $\cH^{1\bar 1}$ the new notion of the rise agrees with the notion in Definition 3.1.

For the proof of Theorem 5.1 we have to recall some of Darvas's results in \cite{Da4}. With a concave smooth diffeomorphism 
$\chi:[0,\infty)\to[0,\infty)$ Guedj and Zeriahi associate the space
\[
\cE_\chi=\Big\{u\in\cE: \int_X\chi(|u|)\,d\mu_u <\infty\Big\}
\]
and Darvas defines a metric $d_\chi$ on $\cH^{1\bar 1}$ (which he subsequently extends to all of $\cE_\chi$). When $u,v\in\cH^{1\bar 1}$ and $\varphi:[0, 1]\to\cH^{1\bar 1}$ is the geodesic to join them,
\[
d_\chi(u,v)=\int_X\chi\big(|\partial^\pm_t\varphi(t)|\big)\,d\mu_{\varphi(t)},\qquad 0<t<1,
\]
see \cite[(24)]{Da4}. He proves \cite[Lemma 5.2.]{Da4}:
\begin {lem}       
If $u_j,v_j\in\cH^{1\bar 1}$ decrease to $u,v\in\cE_\chi$, then $d_\chi(u_j, v_j)$ is convergent and its limit depends only on $u,v$, not on the choice of $u_j, v_j$. This limit is denoted $d_\chi(u,v)$. 
\end{lem}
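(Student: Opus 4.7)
The plan is to express $d_\chi(u_j,v_j)$ as an integral against the rise and then exploit the a.e.\ convergence supplied by Theorem 5.1. By strict rearrangement invariance (the integrand $\chi\circ|\cdot|$ depends only on the $\mu_{\varphi_j(t)}$--distribution of $\partial_t^{\pm}\varphi_j(t)$),
\[
d_\chi(u_j,v_j)=\int_0^V \chi\bigl(|\rho[u_j,v_j](s)|\bigr)\,ds.
\]
Theorem 5.1 produces a decreasing usc $r:(0,V)\to\bR$, determined by $u,v$ alone, with $\rho[u_j,v_j]\to r$ a.e. The ultimate target is $\lim_j d_\chi(u_j,v_j)=\int_0^V\chi(|r(s)|)\,ds$; the independence claim then comes for free, since the right side is intrinsic to $u,v$.

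Because decreasing the two arguments pulls the rise in opposite directions (Lemma 3.2(a)), $|\rho[u_j,v_j]|$ is not monotone in $j$. To get around this, I use the Pythagorean identity of Lemma 4.1 to split
\[
d_\chi(u_j,v_j)=d_\chi(u_j,u_j\wedge v_j)+d_\chi(u_j\wedge v_j,v_j),
\]
which is valid because at each $s\in(0,V)$ at most one of $\rho[u_j,u_j\wedge v_j](s)$ and $\rho[u_j\wedge v_j,v_j](s)$ is nonzero and $\chi(0)=0$. Setting $b_j:=u_j\wedge v_j\searrow b:=u\wedge v\in\cE_\chi$, the two summands have the same structure, so it suffices to analyze $d_\chi(u_j,b_j)$ with $u_j\ge b_j$.

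For this comparable piece I use a double limit. For $j\ge k$, Lemma 3.2(a) gives $\rho[u_j,b_j]\ge\rho[u_k,b_j]$, both $\le 0$, whence $d_\chi(u_j,b_j)\le d_\chi(u_k,b_j)$. Keeping $k$ fixed and letting $j\to\infty$, Lemma 3.2(a) again renders $|\rho[u_k,b_j]|$ monotonically increasing with a.e.\ limit the mixed rise $|\rho[u_k,b]|$ (defined via Definition 5.2 applied to the decreasing sequence $(u_k,b_j)$), so monotone convergence yields
\[
\lim_j d_\chi(u_k,b_j)=\int_0^V\chi(|\rho[u_k,b]|)\,ds.
\]
Sending $k\to\infty$, $|\rho[u_k,b]|$ monotonically decreases to $|\rho[u,b]|$, and dominated convergence with dominator $\chi(|\rho[u_1,b]|)$ delivers
\[
\limsup_j d_\chi(u_j,b_j)\le\int_0^V\chi(|\rho[u,b]|)\,ds,
\]
while the matching lower bound is Fatou applied to the a.e.\ convergence $\rho[u_j,b_j]\to\rho[u,b]$ (Theorem 5.1). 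Summing this with the symmetric counterpart for $d_\chi(b_j,v_j)$ and invoking the $\cE$-version of the Pythagorean identity (obtained by passing Lemma 4.1 through Theorem 5.1) recovers $\lim_j d_\chi(u_j,v_j)=\int_0^V\chi(|r|)\,ds$.

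The main obstacle is the integrability of the dominator in the outer passage, namely $\int_0^V\chi(|\rho[u_1,b]|)\,ds<\infty$; this needs to be extracted from the Guedj--Zeriahi condition $\int_X\chi(|b|)\,d\mu_b<\infty$ defining $\cE_\chi$ together with the boundedness of $u_1\in\cH^{1\bar 1}$, and is the genuinely non-formal step bridging the rise with the energy class $\cE_\chi$.
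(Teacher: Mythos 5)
Your argument is circular relative to the way this lemma functions in the paper. The statement is not proved in the paper at all: it is quoted from Darvas [Da4, Lemma 5.2] precisely because it is an \emph{input} to Theorem 5.1. The paper's proof of Theorem 5.1 begins by choosing $\chi$ with $u,v\in\cE_\chi$ and then uses the convergence (hence boundedness) of $d_\chi(u_j,v_j)$ supplied by this lemma to run Helly's theorem and extract the a.e.\ limit $r$, and it uses the lemma again (through Lemma 5.4) to show $r$ is independent of the approximating sequences. Your proposal runs the implication backwards: every essential step --- the existence and sequence-independence of the a.e.\ limit of $\rho[u_j,v_j]$, the ``mixed rises'' $\rho[u_k,b]$ and $\rho[u,b]$ for arguments lying in $\cE$ but not in $\cH^{1\bar 1}$ (these only exist via Definition 5.2), the Fatou lower bound via $\rho[u_j,b_j]\to\rho[u,b]$, and the $\cE$--version of the Pythagorean identity (Theorem 5.5) --- presupposes Theorem 5.1. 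As a proof of the lemma within this paper's logical order it therefore cannot stand; at best it shows that Theorem 5.1 implies a formula for $\lim_j d_\chi(u_j,v_j)$, which is essentially the content of Lemma 5.4, a statement the paper derives \emph{after} assuming the present lemma.

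Separately, even granting the circular inputs, you flag but do not close the step $\int_0^V\chi(|\rho[u_1,b]|)<\infty$, and the assertion $b=u\wedge v\in\cE_\chi$ is also left unproved; these are exactly where the hypothesis $u,v\in\cE_\chi$ must do work, and neither follows formally from what you wrote (the natural bound $|\rho[u_1,b]|\le|(b-u_1)^{\star b}|$ would invoke Theorem 6.1, which again sits downstream of Theorem 5.1). A correct self-contained proof has to establish the Cauchy property of $d_\chi(u_j,v_j)$ directly at the level of $\cH^{1\bar 1}$ --- for instance by Darvas's route of comparing $d_\chi$ with the explicit functional $\int_X\chi(|u-v|)\,(d\mu_u+d\mu_v)$ and exploiting monotonicity of $d_\chi$ under the lattice operations --- without any reference to the limit object $\rho[u,v]$, whose very construction in this paper depends on the lemma you are trying to prove.
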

Note that in $\cH^{1\bar 1}$, $d_\chi$ can be expressed through $\rho[u,v]$, 
\begin{equation}      
d_\chi(u,v)=\int^V_0\chi\big(|(\partial^+\varphi(0))^\star |\big)=\int^V_0\chi(|\rho[u,v]|), \quad u,v\in\cH^{1\bar 1}.
\end{equation}
\begin{lem}      
Suppose $u_j,v_j\in\cH^{1\bar 1}$ decrease to $u,v\in\cE_\chi$, $u_j\le v_j$, and $\rho[u_j, v_j]$ converge a.e. to a function $r:(0, V)\to [0,\infty)$. Then
\[
d_\chi(u,v)=\lim_{j\to\infty} d_\chi(u_j,v_j)=\int^V_0\chi\circ r.
\]
\end{lem}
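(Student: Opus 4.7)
The first equality $d_\chi(u,v)=\lim_j d_\chi(u_j,v_j)$ is the statement of Lemma 5.3, so the actual task is to identify this common limit with $\int_0^V\chi\circ r$. My plan is to combine the integral formula (5.1), the monotonicity properties in Lemma 3.2(a), and dominated convergence.

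First, since $u_j\le v_j$, Lemma 3.2(a) applied to the trivial identity $u_j\le v_j$ (together with $\rho[u_j,u_j]=0$, as the constant geodesic has zero velocity) gives $\rho[u_j,v_j]\ge 0$, so (5.1) reads
\[d_\chi(u_j,v_j)=\int_0^V \chi\bigl(|\rho[u_j,v_j]|\bigr)=\int_0^V \chi\circ\rho[u_j,v_j].\]
Continuity of $\chi$ then gives $\chi\circ\rho[u_j,v_j]\to\chi\circ r$ a.e.\ on $(0,V)$, and the question reduces to exchanging limit and integral.

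The main difficulty is exhibiting an integrable dominant. The idea is to replace the upper endpoint $v_j$ by the larger, fixed $v_1$: by Lemma 3.2(a) we have $\rho[u_j,v_j]\le\rho[u_j,v_1]$ (since $v_j\le v_1$), and because $u_j$ decreases the sequence $\rho[u_j,v_1]$ is pointwise nondecreasing in $j$. Let $\sigma$ denote its pointwise supremum. Monotone convergence together with (5.1) gives
\[\int_0^V\chi\circ\sigma=\lim_j\int_0^V\chi\circ\rho[u_j,v_1]=\lim_j d_\chi(u_j,v_1)=d_\chi(u,v_1),\]
where the final equality is Lemma 5.3 applied to the sequence $u_j$ and the constant sequence $v_1\in\cH^{1\bar 1}\subset\cE_\chi$. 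Since $d_\chi(u,v_1)<\infty$, the function $\chi\circ\sigma$ is integrable on $(0,V)$.

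Because $\chi$ is increasing on $[0,\infty)$, the pointwise bound $0\le\chi\circ\rho[u_j,v_j]\le\chi\circ\sigma$ now holds for every $j$, so dominated convergence yields $\int_0^V\chi\circ\rho[u_j,v_j]\to\int_0^V\chi\circ r$. Combined with $d_\chi(u_j,v_j)\to d_\chi(u,v)$ from Lemma 5.3, this closes the argument. The only nontrivial step was constructing the dominant, which the monotonicity in Lemma 3.2(a) handles cleanly by freezing the upper endpoint.
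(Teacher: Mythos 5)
Your argument is correct, and it takes a genuinely different route from the paper's. You produce an honest integrable majorant: freezing the upper endpoint at $v_1$ and using the order monotonicity of the rise (Lemma 3.2(a)) in both arguments, you get $0\le\rho[u_j,v_j]\le\rho[u_j,v_1]\nearrow\sigma$, and monotone convergence together with (5.1) and Lemma 5.3 (applied to $u_j\searrow u$ and the constant sequence $v_1\in\cH^{1\bar1}\subset\cE_\chi$) identifies $\int_0^V\chi\circ\sigma=d_\chi(u,v_1)<\infty$; dominated convergence then finishes. The paper instead proves uniform integrability of the family $\chi(\rho[u_j,v_j])$ by the Guedj--Zeriahi device: it constructs $f\to\infty$ with $\ochi=f\chi$ still an admissible concave weight and $u,v\in\cE_{\ochi}$, truncates at level $M$, and bounds the tail by $\int_{\rho>M}\chi(\rho)\le f(M)^{-1}\int_0^V\ochi(\rho)\le C/f(M)$. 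Your version is more elementary (no auxiliary weight $\ochi$, no appeal to \cite[Proposition 10.16]{GZ2} inside this lemma) but leans on the specific hypotheses that the approximants decrease and satisfy $u_j\le v_j$, which is what makes $\rho[u_j,v_1]$ monotone in $j$ and nonnegative; the paper's tail estimate is the more robust mechanism, usable whenever one merely has a uniform bound on $d_{\ochi}(u_j,v_j)$ rather than monotone sequences. All the ingredients you invoke (Lemma 3.2, the identity (5.1), Lemma 5.3) precede Lemma 5.4 in the paper, so there is no circularity.
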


\begin{proof}
Since $\rho[u_j,v_j]\ge 0$, in light of (5.1) we need to show
\begin{equation}      
\lim_{j\to\infty}\int^V_0\chi(\rho[u_j, v_j])=\int^V_0\chi\circ r.
\end{equation}
As in the proof of \cite[Proposition 10.16]{GZ2}, one can construct a smooth increasing function $f:[0,\infty)\to [1,\infty)$ such that $\lim_\infty f=\infty$, $\ochi=f\chi:[0,\infty)\to[0,\infty)$ is still a concave diffeomorphism, and $u,v\in\cE_{\ochi}$. Thus the sequence $d_{\ochi}(u_j, v_j)=\int_0^V\ochi(\rho[u_j,v_j])$ is still convergent by Lemma 5.3; let $C=\sup_j d_{\ochi}(u_j, v_j)$.

With a positive number $M$ write
\[\rho_M[u_j,v_j]=\min(M,\rho[u_j, v_j])\quad \text{and}\quad r_M=\min(M, r).\]
By dominated resp. monotone convergence
\begin{equation}       
\lim_{j\to\infty}\int^V_0\chi(\rho_M[u_j, v_j])=\int^V_0\chi\circ r_M, \qquad \lim_{M\to\infty}\int^V_0\chi\circ r_M=\int^V_0\chi\circ r.
\end{equation}
Furthermore, using also (5.1)
\begin{multline*}
0\le\int^V_0\chi(\rho[u_j, v_j])-\int^V_0\chi(\rho_M[u_j, v_j])\le\int_{\rho[u_j, v_j]>M} \chi(\rho[u_j,v_j])\\
\le \frac{1}{f(M)}\int_{\rho[u_j, v_j]> M}\ochi(\rho[u_j, v_j])\le\frac{1}{f(M)}\int^V_0\ochi(\rho[u_j, v_j])\le\frac{C}{f(M)}.
\end{multline*}
Since $\lim_{M\to\infty} f(M)=\infty$, this estimate together with (5.3) gives (5.2).
\end{proof}

\begin{proof}[Proof of Theorem 5.1] By \cite[Proposition 10.16]{GZ2} there is a concave diffeomorphism 
$\chi:[0,\infty)\to[0,\infty)$ such that $u\in \cE_\chi$; and inspecting the proof we see that we can arrange 
$v\in\cE_\chi$ as well. By Lemma 5.3 the sequence $d_\chi(u_j, v_j)$ is convergent, hence bounded. Assume 
first that $u_j\le v_j$ for all $j$.
If $0<\lambda<V$, by (5.1)
\[
0\le\chi\big(\rho[u_j,v_j](\lambda)\big)\le\frac{1}{\lambda}\int^\lambda_0\chi(\rho[u_j,v_j])\le\frac{1}{\lambda} d_\chi(u_j, v_j).
\]
In particular, the sequences $\chi(\rho[u_j,v_j])$ and $\rho[u_j,v_j]$ are pointwise bounded. By Helly's theorem a subsequence
$\rho[u_{j_k},v_{j_k}]$ will converge everywhere to a decreasing function. By modifying the limit function at its points of discontinuity we obtain a decreasing usc function $r:(0, V)\to[0,\infty)$ such that $\lim_{k\to\infty}\rho[u_{j_k},v_{j_k}]=r$ at points where $r$ is continuous. Fix a $\tau\in(0, V)$ where $r$ is continuous. We claim that if $\ou_j, \ov_j\in \cH^{1\bar 1}$ decrease to $u,v$, and $\ou_j\le \ov_j$, then
\begin{equation}     
\rho[\ou_j,\ov_j](\tau)\to r(\tau);
\end{equation}
this will then prove the theorem whenever $u_j\le v_j$.

Indeed, suppose (5.4) fails, and $|\rho[\ou_j,\ov_j](\tau)-r(\tau)|\ge\delta$ with some $\delta >0$ and infinitely many $j$. Using Helly's theorem as before, we would then have a sequence $i_1< i_2<\dots$ and a decreasing usc function $\overline r:(0,V)\to [0,\infty)$ such that
\begin{equation}      
\lim_{k\to\infty}\rho[\ou_{i_k}, \ov_{i_k}]=
\overline r \quad\text{wherever $\overline r$ is continuous; but } \overline r(\tau)\ne r(\tau).
\end{equation}
By Lemma 5.4 $\int^V_0\chi\circ r=d_\chi(u,v)=\int^V_0\chi\circ \overline r$. But, if $f\in C^\infty[0,\infty)$ has compact support contained in $(0,\infty)$ and $\varepsilon >0$ is sufficiently small, then $\chi+\varepsilon f:[0,\infty)\to [0,\infty)$ is also a concave diffeomorphism, and $u,v\in\cE_{\chi+\varepsilon f}$. Therefore 
\[
\int^V_0(\chi+\varepsilon f)\circ r=\int^V_0(\chi+\varepsilon f)\circ \overline r,\qquad i.e.,\quad
 \int^V_0 f\circ r=\int^V_0f\circ \overline r.
\]
The latter even holds for characteristic functions $f=1_{[p,q]}$,  
where $0<p<q$, since these characteristic functions are decreasing limits of smooth functions $f$ with compact support in $(0,\infty)$. Therefore $r$ and $\overline r$ are equidistributed. As both decrease and are usc, $r=\overline r$ follows, in contradiction with (5.5). This proves the theorem when $u_j\le v_j$. Clearly, the theorem also holds if $u_j\ge v_j$, cf. (3.2).

Now consider general $u_j, v_j$. By \cite[Corollary 3.5]{Da2} and \cite[Theorem 2.5]{DR} $u\wedge v\in\cE$ and 
$u_j\wedge v_j\in\cH^{1\bar 1}$. A moment's thought gives that the $\omega$--plurisubharmonic function 
$w=\lim_j u_j\wedge v_j$ is equal to $u\wedge v$. Indeed, $u_j\wedge v_j\ge u\wedge v$ implies $w\ge u\wedge v$. Also $w\le u_j, v_j$, whence $w\le u,v$ and $w\le u\wedge v$. From Lemma 4.1
\begin{equation}       
\rho[u_j, u_j\wedge v_j]=\min(0, \rho[u_j, v_j]), \quad \rho[u_j\wedge v_j,v_j]=\max (0, \rho[u_j, v_j]),
\end{equation}
and we conclude that $\rho[u_j, v_j]=\rho[u_j, u_j\wedge v_j]+\rho[u_j\wedge v_j, v_j]$ indeed converges a.e.; the limit depends only on $u, u\wedge v$ and $v$, i.e., on $u,v$.
\end{proof}

\begin{thm}      
If $u,v\in\cE$, then
\[\rho[u, u\wedge v]=\min(0, \rho[u,v]),\quad \rho[u\wedge v, v]=\max(0,\rho[u,v]).\]
In particular, $\rho[u,v]=\rho[u, u\wedge v]+\rho[u\wedge v, v]$.
\end{thm}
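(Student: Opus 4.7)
The strategy is to lift the identity from Lemma 4.1, which is already established in $\cH^{1\bar 1}$, to $\cE$ via the approximation machinery of Theorem 5.1. Pick sequences $u_j,v_j\in\cH^{1\bar 1}$ that decrease to $u,v$ respectively. By \cite[Theorem 2.5]{DR} we have $u_j\wedge v_j\in\cH^{1\bar 1}$ for every $j$. Since $\wedge$ is increasing in each argument, $u_j\wedge v_j$ is a decreasing sequence, and the argument in the last paragraph of the proof of Theorem 5.1 shows that its limit equals $u\wedge v\in\cE$.

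With the three pairs of decreasing approximations $(u_j,v_j)$, $(u_j,u_j\wedge v_j)$, and $(u_j\wedge v_j, v_j)$ in $\cH^{1\bar 1}$ converging to $(u,v)$, $(u,u\wedge v)$, and $(u\wedge v, v)$ in $\cE$, Theorem 5.1 yields
\[
\rho[u_j,v_j]\to\rho[u,v],\qquad \rho[u_j, u_j\wedge v_j]\to\rho[u, u\wedge v],\qquad \rho[u_j\wedge v_j,v_j]\to\rho[u\wedge v, v]
\]
almost everywhere on $(0,V)$. On the other hand, Lemma 4.1 applied within $\cH^{1\bar 1}$ gives, for every $j$,
\[
\rho[u_j, u_j\wedge v_j]=\min(0,\rho[u_j,v_j]),\qquad \rho[u_j\wedge v_j, v_j]=\max(0,\rho[u_j,v_j]).
\]

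Since $\min(0,\cdot)$ and $\max(0,\cdot)$ are continuous on $\bR$, we can pass to the limit almost everywhere to obtain
\[
\rho[u,u\wedge v]=\min\bigl(0,\rho[u,v]\bigr)\quad\text{and}\quad \rho[u\wedge v, v]=\max\bigl(0,\rho[u,v]\bigr)
\]
at a.e. point of $(0,V)$. Both sides in each identity are decreasing usc functions on $(0,V)$ (the right-hand sides because $\rho[u,v]$ is such, and taking $\min$ or $\max$ with a constant preserves these properties), and two decreasing usc functions that agree a.e. must agree everywhere. The final assertion $\rho[u,v]=\rho[u,u\wedge v]+\rho[u\wedge v,v]$ then follows from the elementary identity $x=\min(0,x)+\max(0,x)$.

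The only nontrivial step is the identification $\lim_j u_j\wedge v_j=u\wedge v$; but as noted, this has already been carried out inside the proof of Theorem 5.1, so here it can simply be cited. Everything else is a straightforward pass to the limit, relying on the fact that the decreasing approximations enjoyed by $(u_j,v_j)$ automatically produce decreasing approximations for the enveloped pairs.
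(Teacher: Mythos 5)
Your proposal is correct and follows the same route as the paper: the paper's proof of Theorem 5.5 simply cites (5.6) (which is Lemma 4.1 applied to the approximants $u_j,v_j\in\cH^{1\bar 1}$, together with the observation made in the last paragraph of the proof of Theorem 5.1 that $u_j\wedge v_j$ decreases to $u\wedge v$) and Definition 5.2, which is exactly the limit passage you carry out. Your added remark that two decreasing usc functions agreeing a.e. must agree everywhere is the right way to upgrade the a.e. identity to an everywhere identity, and is consistent with the paper's conventions.
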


This is immediate from (5.6) and Definition 5.2.---Theorem 5.1 has the following generalization: 

\begin{thm}    
If $u_j, v_j\in\cE$ decrease to $u,v$, then $\rho[u_j, v_j]\to\rho[u, v]$ at all points in $(0, V)$ where $\rho[u,v]$ is continuous.
\end{thm}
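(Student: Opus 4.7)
The plan is to mimic the proof of Theorem~5.1, starting now from $\cE$-sequences in place of $\cH^{1\bar 1}$-sequences, and closing the extra gap by a diagonal approximation inside $\cH^{1\bar 1}$.

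Step 1 (Reduction to $u_j\le v_j$). By Theorem~5.6 applied to each pair,
\begin{equation*}
\rho[u_j,v_j]=\rho[u_j,u_j\wedge v_j]+\rho[u_j\wedge v_j,v_j],
\end{equation*}
and likewise for $(u,v)$. Since $u_j\wedge v_j\downarrow u\wedge v$ (as in the last paragraph of the proof of Theorem~5.1), and the two summands have opposite signs, at each continuity point of $\rho[u,v]$ both summands are individually continuous, so it suffices to handle each separately. The non-negative summand is a ``$\le$''-case (applied to $(u_j\wedge v_j,v_j)$); the non-positive one reduces to the same case by time reversal via $(3.2)$, itself extended from $\cH^{1\bar 1}$ to $\cE$ by Theorem~5.1 applied to both geodesics and their reversals. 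So I assume $u_j\le v_j$, with $\rho[u_j,v_j]\ge 0$.

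Step 2 (Compactness). First I extend Lemma~3.2 from $\cH^{1\bar 1}$ to $\cE$: given $u'\le u''$ in $\cE$ and decreasing $\cH^{1\bar 1}$-approximants $u'_k\downarrow u'$, $u''_k\downarrow u''$, replacing $u'_k$ by $u'_k\wedge u''_k$ (still in $\cH^{1\bar 1}$ by \cite[Theorem~2.5]{DR}) yields approximants satisfying $u'_k\le u''_k$ with the same monotone limits, and Theorem~5.1 passes the inequality to the limit. Applied with $u\le u_j$ and $v_j\le v_1$,
\begin{equation*}
0\le\rho[u_j,v_j]\le\rho[u,v_1]\quad\text{on }(0,V),
\end{equation*}
with a decreasing usc, hence locally bounded, envelope on the right. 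By Helly's selection theorem, every subsequence of $(\rho[u_j,v_j])_j$ admits a further subsequence converging at each continuity point of the limit to a decreasing usc $r:(0,V)\to[0,\infty)$.

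Step 3 (Identification of $r$). By \cite[Proposition~10.16]{GZ2}, choose a concave diffeomorphism $\chi:[0,\infty)\to[0,\infty)$ with $u,v\in\cE_\chi$; since $\cE_\chi$ is stable under taking larger $\omega$-psh elements of $\cE$ (via closure under $\vee$), all $u_j,v_j\in\cE_\chi$ as well. By Lemma~5.4 applied to $(u,v)$ and, separately, to each $(u_j,v_j)$ via $\cH^{1\bar 1}$-approximation from above,
\begin{equation*}
d_\chi(u,v)=\int_0^V\chi\circ\rho[u,v],\qquad d_\chi(u_j,v_j)=\int_0^V\chi\circ\rho[u_j,v_j].
\end{equation*}
The central claim is $d_\chi(u_j,v_j)\to d_\chi(u,v)$. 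I prove it by a diagonal construction: for each $j$, Lemma~5.3 yields $u_j^{(k)}\le v_j^{(k)}$ in $\cH^{1\bar 1}$ decreasing to $u_j,v_j$ with $d_\chi(u_j^{(k)},v_j^{(k)})\to d_\chi(u_j,v_j)$; intersecting successive choices with envelopes (staying in $\cH^{1\bar 1}$ by \cite{DR}, and preserving both $\tilde U_j\le\tilde V_j$ and monotone decrease in $j$), I produce $\tilde U_j,\tilde V_j\in\cH^{1\bar 1}$ that decrease in $j$ to $u,v$ and satisfy $|d_\chi(\tilde U_j,\tilde V_j)-d_\chi(u_j,v_j)|<1/j$; Lemma~5.3 applied to $(\tilde U_j,\tilde V_j)\downarrow(u,v)$ closes the claim. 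Along the Helly subsequence, dominated convergence against the integrable $\chi\circ\rho[u,v_1]$ yields $\int_0^V\chi\circ r=d_\chi(u,v)=\int_0^V\chi\circ\rho[u,v]$. Perturbing $\chi$ by $\varepsilon f$ for $f\in C_c^\infty(0,\infty)$, exactly as in the last paragraph of the proof of Theorem~5.1, forces $r$ and $\rho[u,v]$ to be equidistributed; both decreasing usc, they coincide. Since every subsequence admits a sub-subsequence converging to $\rho[u,v]$, the full sequence does at every continuity point.

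The main obstacle is the diagonal construction in Step~3: the approximants $\tilde U_j,\tilde V_j$ must simultaneously live in $\cH^{1\bar 1}$, satisfy the pairwise inequality $\tilde U_j\le\tilde V_j$, decrease in $j$, converge pointwise to $u,v$, and track $d_\chi(u_j,v_j)$ to within $1/j$. Compatibility of these four constraints rests squarely on envelope stability of $\cH^{1\bar 1}$ from \cite[Theorem~2.5]{DR}.
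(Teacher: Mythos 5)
Your architecture is sound but it is a genuinely different, and much heavier, route than the paper's. The paper proves this theorem by a single diagonal argument at the level of pointwise values of $\rho$: for each $j$ it takes $u_{ij},v_{ij}\in\cH$ strictly decreasing to $u_j,v_j$, fixes a dense sequence $\tau_1,\tau_2,\dots$ of common continuity points, and uses Dini's theorem against auxiliary sequences $U^k,V^k\in\cH$ strictly decreasing to $u,v$ to extract a diagonal $U_k=u_{i(k)j(k)}$, $V_k=v_{i(k)j(k)}$ that decreases to $u,v$ while $\rho[U_k,V_k]$ stays within $1/k$ of $\rho[u_{j(k)},v_{j(k)}]$ at $\tau_1,\dots,\tau_k$; Theorem 5.1 applied to $U_k,V_k$ then finishes, with no Pythagorean reduction, no Helly selection, and no $d_\chi$. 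You instead re-run the whole identification machinery from the proof of Theorem 5.1 (Helly, $d_\chi$, perturbation of $\chi$) and relocate the diagonal argument into the claim $d_\chi(u_j,v_j)\to d_\chi(u,v)$. The supporting steps you supply are all legitimate: Theorem 5.5 does reduce you to $u_j\le v_j$ (both $\min(0,\cdot)$ and $\max(0,\cdot)$ of $\rho[u,v]$ are continuous wherever $\rho[u,v]$ is), your extension of Lemma 3.2 to $\cE$ via the $\wedge$-trick and Theorem 5.1 correctly avoids circularity with Lemma 5.10, and $\cE_\chi$ is indeed stable under passing to larger potentials (cite this; it is not free).

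The one genuine soft spot is the diagonal construction itself. Taking $\tilde U_j=\bigwedge_{i\le j}u_i^{(k_i)}$ does secure membership in $\cH^{1\bar 1}$, the inequality $\tilde U_j\le\tilde V_j$, monotone decrease in $j$, and (since $\bigwedge_{i<j}u_i^{(k_i)}\wedge u_j^{(k)}$ decreases to $u_j$ as $k\to\infty$, so Lemma 5.3 applies) the tracking estimate $|d_\chi(\tilde U_j,\tilde V_j)-d_\chi(u_j,v_j)|<1/j$ for $k_j$ large. What it does \emph{not} secure is $\lim_j\tilde U_j=u$: one only gets $u\le\lim_j\tilde U_j\le\inf_i u_i^{(k_i)}$, and with no further control on the $k_i$ the right-hand side can strictly exceed $u$. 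This constraint cannot be charged to the envelope stability of \cite{DR}; it must be imposed separately, e.g.\ by also demanding $u_i^{(k_i)}<U^i$ for a reference sequence $U^i\in\cH$ strictly decreasing to $u$ (Dini's theorem, exactly the device the paper uses), or by demanding $\int_X\big(u_i^{(k_i)}-u_i\big)\,\omega^n<2^{-i}$ and noting that a decreasing sequence of $\omega$-plurisubharmonic functions converging to $u$ in $L^1$ converges to $u$ everywhere. With that amendment your proof closes; but given that the fix is precisely the paper's Dini step, you may as well run the paper's shorter argument, which never leaves the level of the functions $\rho$ themselves.
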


\begin{proof}
It will suffice to prove convergence along a subsequence. For each $j\in\bN$ let $u_{ij}, v_{ij}\in\cH, i=1, 2,\dots$, strictly 
decrease to $u_j, v_j$ (i.e., $u_{i+1,j}< u_{ij}$ and $v_{i+1,j} <v_{ij}$ everywhere). Similarly, choose $U^i, V^i\in\cH$ strictly decreasing to $u,v$. Let $\tau_1, \tau_2,\ldots\in (0, V)$ be a dense sequence of points where each $\rho[u,v], \rho[u_j, v_j]$ is continuous. By Theorem 5.1/Definition 5.2
\begin{equation}       
\rho[u_{ij}, v_{ij}]\to\rho[u_j, v_j]\qquad\text{at each } \tau_k\,\text{ as } i\to\infty.
\end{equation}

We define natural numbers $1=i(1)<i(2)<\dots$ and $1=j(1)<j(2)<\dots$ recursively, and set $U_k=u_{i(k)j(k)}$, 
$V_k=v_{i(k)j(k)}$, 
 as follows. 
Suppose we already have $i(k-1), j(k-1)$. If $j$ is sufficiently large, then $u_j< U_{k-1}, U^k$ and
$v_j<V_{k-1}, V^k$ by Dini's theorem. Fix such $j=j(k)>j(k-1)$. Since $\lim_{i\to\infty} u_{ij}=u_j<U_{k-1}, U^k$ 
and 
$\lim_{i\to\infty} v_{ij}=v_j<V_{k-1}, V^k$, for sufficiently large $i=i(k)> i(k-1)$ again by Dini's theorem and by (5.7)
\begin{equation} 
u_{ij}< U_{k-1},  U^k,\quad v_{ij}< V_{k-1}, V^k, \quad\text{and}\quad 
|\rho[u_{ij}, v_{ij}]-\rho[u_j, v_j]|< 1/k 
\end{equation}
at $\tau_1,\dots, \tau_k$. Thus $U_k=u_{i(k)j(k)}$ decrease to $u$ and $V_k=v_{i(k)j(k)}$ decrease to $v$.
By Theorem 5.1/Definition 5.2 therefore $\rho[U_k, V_k]\to\rho[u,v]$ a.e.; and in fact by monotonicity, at each $\tau_l$. 
Hence (5.8) gives $\rho[u_{j(k)}, v_{j(k)}]\to \rho[u,v]$ at each $\tau_l$. But this implies convergence at each point of continuity 
of $\rho[u,v]$.
\end{proof}

Theorem 5.6 has an obvious consequence concerning geodesics.

\begin{thm}            
If $\varphi_j:[a,b]\to\cE$ are geodesics that decrease to a geodesic $\varphi:[a, b]\to\cE$, then $\rho_{\varphi_j}\to\rho_\varphi$ a.e.
\end{thm}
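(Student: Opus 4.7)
The plan is to reduce Theorem 5.8 directly to Theorem 5.6 by exploiting the fact that, per Definition 5.2, the rise of a geodesic is just a normalized rise between its endpoints. Concretely, I would argue as follows.

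First, unpack the definitions. By Definition 5.2,
\[
\rho_{\varphi_j} = \frac{1}{b-a}\,\rho[\varphi_j(a), \varphi_j(b)] \quad\text{and}\quad \rho_\varphi = \frac{1}{b-a}\,\rho[\varphi(a),\varphi(b)],
\]
so the theorem is equivalent to the statement $\rho[\varphi_j(a), \varphi_j(b)] \to \rho[\varphi(a), \varphi(b)]$ almost everywhere on $(0,V)$.

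Second, observe that the pointwise decrease $\varphi_j(t) \downarrow \varphi(t)$ for every $t \in [a,b]$ applies in particular at the endpoints $t=a$ and $t=b$. Thus $\varphi_j(a)$ and $\varphi_j(b)$ are sequences in $\cE$ that decrease to $\varphi(a)$ and $\varphi(b)$, respectively, both of which lie in $\cE$.

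Third, apply Theorem 5.6 with $u_j = \varphi_j(a)$, $v_j = \varphi_j(b)$, $u = \varphi(a)$, $v = \varphi(b)$. This gives
\[
\rho[\varphi_j(a), \varphi_j(b)] \longrightarrow \rho[\varphi(a), \varphi(b)]
\]
at every point of continuity of the limiting function $\rho[\varphi(a),\varphi(b)]$. Since the latter is decreasing and usc on $(0,V)$, its set of discontinuities is at most countable, hence of Lebesgue measure zero. Dividing through by $b-a$ yields $\rho_{\varphi_j} \to \rho_\varphi$ almost everywhere, as required.

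There is no real obstacle here — the theorem is essentially a repackaging of Theorem 5.6 once one notices that the definition of $\rho_\varphi$ depends only on the endpoints $\varphi(a)$ and $\varphi(b)$. This is why the author labels it an "obvious consequence."
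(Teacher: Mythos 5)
Your proposal is correct and is exactly the paper's argument: the paper likewise notes $\rho_{\varphi_j}=\rho[\varphi_j(a),\varphi_j(b)]/(b-a)$, $\rho_\varphi=\rho[\varphi(a),\varphi(b)]/(b-a)$ and invokes Theorem 5.6 at the endpoints. Nothing is missing.
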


Indeed, $\rho_{\varphi_j}=\rho[\varphi_j(a), \varphi_j(b)]/(b-a)$ and $\rho_\varphi=\rho[\varphi(a),\varphi(b)]/(b-a)$.
This then proves Theorem 1.2.

The results of section 4 easily generalize from $\cH^{1\bar 1}$ to $\cE$. 

\begin{thm}        
Let $u,v,w\in\cE$. If $u\le v$ then $\rho[u,v]\ge \rho[u\wedge w, v\wedge w]$; if $u\ge v$ then $\rho[u,v]\le\rho[u\wedge w, v\wedge w]$.
\end{thm}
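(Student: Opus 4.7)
The strategy is to reduce the statement to Lemma 4.2, which is exactly the analogous assertion in $\cH^{1\bar 1}$, by choosing decreasing approximations of $u,v,w$ inside $\cH^{1\bar 1}$ and passing to the limit via Theorem 5.6.

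Assume first $u\le v$. Pick sequences $u_j,v_j,w_j\in\cH\subset\cH^{1\bar 1}$ decreasing to $u,v,w$; such approximations exist by the references cited right after Theorem 1.2. A priori there is no reason for $u_j\le v_j$ to hold, so Lemma 4.2 cannot be applied directly to the triple $u_j,v_j,w_j$. We remedy this by replacing $u_j$ with
\[U_j=u_j\wedge v_j,\]
which still lies in $\cH^{1\bar 1}$ by the Darvas--Rubinstein result cited in section 4. The sequence $U_j$ is decreasing (since $\wedge$ is monotone in each argument), and its pointwise limit $\phi$ satisfies $\phi\ge u\wedge v$ term by term as well as $\phi\le u,v$ (because $\phi\le u_j,v_j$), so $\phi=u\wedge v=u$ using the hypothesis $u\le v$. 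Hence $U_j\in\cH^{1\bar 1}$ decreases to $u$, and by construction $U_j\le v_j$. The same monotone--limit argument shows that $U_j\wedge w_j$ decreases to $u\wedge w$ and $v_j\wedge w_j$ decreases to $v\wedge w$.

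Applying Lemma 4.2 to the triple $U_j,v_j,w_j\in\cH^{1\bar 1}$ gives
\[\rho[U_j\wedge w_j,v_j\wedge w_j]\le\rho[U_j,v_j]\qquad\text{on }(0,V).\]
By Theorem 5.6, the right-hand side converges to $\rho[u,v]$ at every continuity point of $\rho[u,v]$, and the left-hand side converges to $\rho[u\wedge w,v\wedge w]$ at every continuity point of the latter. The joint set of continuity is the complement of a countable set, hence dense in $(0,V)$, so the inequality $\rho[u\wedge w,v\wedge w]\le\rho[u,v]$ holds on a dense subset. Both sides are decreasing and usc, hence left-continuous, so the inequality extends to every point of $(0,V)$.

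The case $u\ge v$ is handled symmetrically: replace $v_j$ by $V_j=u_j\wedge v_j\in\cH^{1\bar 1}$, so that $V_j\le u_j$ and $V_j$ decreases to $v$, and apply the second half of Lemma 4.2. The principal technical point is to arrange the correct ordering between the approximating sequences before invoking Lemma 4.2, for which the envelope trick above is exactly suited; all remaining steps are routine applications of the monotone--limit behavior of $\wedge$ together with Theorem 5.6.
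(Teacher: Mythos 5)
Your proof is correct and follows the same route the paper takes: approximate by decreasing sequences in $\cH^{1\bar 1}$, apply Lemma 4.2, and pass to the limit with Theorem 5.6. The paper's own proof is a one-sentence remark; your added step of replacing $u_j$ by $u_j\wedge v_j$ (rather than, say, $u_j\vee v_j$, which would leave $\cH^{1\bar 1}$) to secure the ordering hypothesis of Lemma 4.2 is exactly the detail that needs to be supplied, and you supply it correctly.
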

The statement follows from Lemma 4.2 and Theorem 5.6 upon representing $u,v,w$ as limits of decreasing sequences $u_j, v_j, w_j\in\cH^{1\bar 1}$.

\begin{thm}        
If $u,v,w\in\cE$ and $0\le\lambda<V$, then
\begin{equation}       
\int^\lambda_0\rho[u,v]+\int^\lambda_0\rho[v,w]\ge \int^\lambda_0\rho[u,w].
\end{equation}
\end{thm}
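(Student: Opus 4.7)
The idea is to derive Theorem 5.9 from its $\cH^{1\bar 1}$ version Lemma 4.3 by a decreasing approximation. Using the Demailly / B\l ocki--Ko\l odziej regularization cited earlier in the paper, I choose sequences $u_j, v_j, w_j \in \cH$ decreasing to $u, v, w$; since $u_j, v_j, w_j \in \cH^{1\bar 1}$, Lemma 4.3 gives
\[
\int_0^\lambda \rho[u_j,v_j]+\int_0^\lambda \rho[v_j,w_j]\ge\int_0^\lambda \rho[u_j,w_j]
\]
for every $j$, while Theorem 5.6 yields a.e.\ convergence of the integrands to their $(u,v,w)$-counterparts. Passing to the limit delivers (5.9) once this a.e.\ convergence is promoted to convergence of the $\int_0^\lambda$ integrals in $[-\infty,+\infty]$.

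For a single pair, say $(u_j,v_j)\downarrow(u,v)$, I apply the Pythagorean identity of Theorem 5.5,
\[
\rho[u_j,v_j]=\rho[u_j\wedge v_j,v_j]+\rho[u_j,u_j\wedge v_j],
\]
whose summands are nonnegative and nonpositive respectively, using that $u_j\wedge v_j$ decreases to $u\wedge v$ (as in the proof of Theorem 5.1). The nonpositive summand, via (3.2) applied to the $\cH^{1\bar 1}$ pair $(u_j, u_j\wedge v_j)$, equals $-\rho[u_j\wedge v_j,u_j](V-\cdot)$ a.e., so its integral over $(0,\lambda)$ becomes $-\int_{V-\lambda}^V\rho[u_j\wedge v_j,u_j]$. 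Fixing a continuity point $s_0 < V-\lambda$ of $\rho[u\wedge v,u]$, the pointwise convergence $\rho[u_j\wedge v_j,u_j](s_0)\to\rho[u\wedge v,u](s_0)<\infty$ yields a uniform bound on the decreasing sequence $\rho[u_j\wedge v_j,u_j]$ throughout $[s_0,V)$, and dominated convergence closes this piece.

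For the nonnegative summand $r_j=\rho[u_j\wedge v_j,v_j]$, the essential case is $u\wedge v\in\cE^1$. Here $v\in\cE^1$ as well, by monotonicity of the Monge--Amp\`ere energy under $u\wedge v\le v$, so Lemma 5.4 applied with $\chi(t)=t$ gives $\int_0^V r_j\to\int_0^V r<\infty$, where $r=\rho[u\wedge v,v]$. Combining with $\int_\lambda^V r_j\to\int_\lambda^V r$ (by the same dominated-convergence argument as above) and subtracting yields $\int_0^\lambda r_j\to\int_0^\lambda r$. In the complementary case, where $\int_0^\lambda r=+\infty$ (established by combining Lemma 5.4 with a sublinear $\chi$ provided by Guedj--Zeriahi and the truncation-plus-$\ochi$ device from the proof of Lemma 5.4), Fatou forces $\int_0^\lambda r_j\to+\infty$. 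Carrying out the analogous analysis for the pairs $(v,w)$ and $(u,w)$ and taking $j\to\infty$ in the displayed Lemma 4.3 inequality delivers (5.9).

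The principal obstacle is the control of $r_j$ near $s=0^+$: this sequence is neither monotone in $j$ nor uniformly bounded, so its convergence of integrals cannot come from standard convergence theorems. The Pythagorean splitting isolates the sign and reduces matters to a nonnegative summand, where the subtraction trick built on Lemma 5.4 at $\chi(t)=t$ disposes of the $\cE^1$ case, and the $\chi$-truncation scheme from the proof of Lemma 5.4 extends the argument to the remaining cases.
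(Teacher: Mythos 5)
Your reduction to Lemma 4.3 by decreasing approximation is the natural starting point, and several pieces are sound: the Fatou/lower-bound treatment of the right-hand pair $(u,w)$ (only $\liminf_j\int_0^\lambda\rho[u_j,w_j]\ge\int_0^\lambda\rho[u,w]$ is needed there), and the handling of the nonpositive Pythagorean summand via (3.2), a continuity point $s_0<V-\lambda$, and dominated convergence on $[V-\lambda,V)$. The gap is in the dichotomy you use for the nonnegative summand $r=\rho[u\wedge v,v]=\max(0,\rho[u,v])$. You claim that if $u\wedge v\notin\cE^1$ then $\int_0^\lambda r=+\infty$, so that the two cases ``$u\wedge v\in\cE^1$'' and ``$\int_0^\lambda r=+\infty$'' exhaust all possibilities. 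They do not. Take $u\in\cH$ and $v\in\cE\setminus\cE^1$ normalized so that $v\le\sup_X u=:c$. Then $u\wedge v\le v$, so $u\wedge v\notin\cE^1$; yet by Lemmas 5.10 and Theorem 6.1, $0\le r=\max(0,\rho[u,v])\le\max(0,\rho[u,c])\le c-\inf_X u$, so $r$ is bounded and $\int_0^\lambda r<\infty$. In this situation neither branch of your argument applies: Fatou gives only $\liminf_j\int_0^\lambda r_j\ge\int_0^\lambda r$ (the wrong direction for the left-hand side of (5.9), where you need $\limsup_j\le$), and the $d_1$-subtraction device is unavailable because Lemma 5.4 with $\chi(t)=t$ requires $u\wedge v,v\in\cE_\chi=\cE^1$ as a hypothesis. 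The correct dividing line would be $\int_0^V r<\infty$ versus $=\infty$ (via (6.1)--(6.2) applied to the pair $u\wedge v\le v$), but even then you would still need an argument preventing escape of mass of $r_j$ to $0^+$ in the finite case, which you do not supply.

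The paper avoids this problem entirely by never trying to prove genuine convergence of $\int_0^\lambda\rho[u_j,v_j]$. Instead it uses the monotonicity of Lemma 5.10 to dominate the approximating rises by rises with one endpoint already at the limit, $\rho[u_i,v_j]\le\rho[u,v_j]$ and $\rho[v_j,w_k]\le\rho[v,w_k]$, so that the left-hand side of the approximate inequality can be sent to its limit by monotone convergence in $j$ (resp.\ $k$) alone; the right-hand side is handled by Fatou exactly as you do. A bootstrap (first bounded $u,v$ with arbitrary $w$, then bounded $u$ only, then the general case) removes the boundedness needed to start the monotone convergence. If you want to salvage your approach, the same Lemma 5.10 domination $r_j\le\max(0,\rho[u,v_j])$, which decreases in $j$ to $r$, is the missing ingredient; but at that point you have essentially reconstructed the paper's argument.
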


One should keep in mind that the integrals in (5.9) may be infinite. In this case a stronger version of the triangle inequality
$$
\int_0^{\lambda}\big(\rho(u,v)+\rho(v,w)-\rho(u,w)\big)\ge 0
$$
would be more meaningful; but we were not able to prove it. The difficulty is how to prove that the negative
part of the integrand is integrable near 0. 

In light of Theorem 5.6 the following is an immediate consequence of Lemma 3.2:

\begin{lem}     
Suppose $u,v,w\in\cE$.
\begin{itemize}
\item[(a)] If $v\le w$ then $\rho[u,v]\le \rho[u,w]$ and $\rho[v,u]\ge\rho[w,u]$.
\item[(b)] If $c\in\bR$, then $\rho[u, v+c]=\rho[u,v]+c=\rho[u-c, v]$.
\end{itemize}
\end{lem}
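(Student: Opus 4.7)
The plan is to reduce both claims to their $\cH^{1\bar1}$ analogues in Lemma 3.2 and then pass through the decreasing limit via Theorem 5.6. I would begin by fixing sequences $\hat u_j,\hat v_j,\hat w_j\in\cH^{1\bar1}$ decreasing pointwise to $u,v,w$, whose existence is explained in the discussion immediately following Theorem 1.2.

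For (a) the hypothesis $v\le w$ must be transmitted to the approximants. I would therefore replace $\hat v_j$ by $v_j:=\hat v_j\wedge\hat w_j$, which stays in $\cH^{1\bar1}$ by \cite[Theorem 2.5]{DR}, still decreases in $j$, and converges to $v\wedge w=v$ since $v\le w$. Setting $u_j=\hat u_j$ and $w_j=\hat w_j$, Lemma 3.2(a) yields $\rho[u_j,v_j]\le\rho[u_j,w_j]$ and $\rho[v_j,u_j]\ge\rho[w_j,u_j]$ for every $j$, and Theorem 5.6 transports each side to its $\cE$-analogue at every continuity point of the respective limit.

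For (b), note that $u_j-c$ and $v_j+c$ still lie in $\cH^{1\bar1}$ and decrease to $u-c$ and $v+c$, so Lemma 3.2(b) produces the chain
\[
\rho[u_j,v_j+c]=\rho[u_j,v_j]+c=\rho[u_j-c,v_j]
\]
for every $j$; Theorem 5.6 sends each of the three expressions to its $\cE$-analogue at continuity points of the limits.

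The only step requiring any argument beyond citation is upgrading ``(in)equality at continuity points'' to ``(in)equality throughout $(0,V)$''. This is routine: a decreasing usc function on $(0,V)$ is left-continuous and has at most countably many discontinuities, so any two such functions that obey a fixed relation on a dense set obey it at every $\lambda_0\in(0,V)$ (approach $\lambda_0$ from below through continuity points of both and take limits). I expect no real obstacle here; the whole argument is essentially a compilation of Lemma 3.2, Theorem 5.6, and this standard density observation.
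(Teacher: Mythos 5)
Your proposal is correct and follows exactly the route the paper intends: the paper's entire ``proof'' is the one-line remark that the lemma is an immediate consequence of Lemma 3.2 in light of Theorem 5.6, and you have simply filled in the details (the replacement $v_j:=\hat v_j\wedge\hat w_j$ to preserve the ordering of the approximants, and the upgrade from a dense set to all of $(0,V)$ via left-continuity of decreasing usc functions, both of which are devices the paper itself uses elsewhere).
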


\begin{proof}[Proof of Theorem 5.9]
Observe that if $u_i, w_i\in\cE$ decrease to $u,w\in\cE$, then on $(0,\lambda)$ the functions $\rho[u_i, w_i]$ 
have a common lower bound. Indeed, if $\rho[u, w]$ is continuous at some $\tau\in(\lambda, V)$, then by 
Theorem 5.6 $\rho[u_i, w_i](\tau)$ is 
convergent, hence bounded below by some $c\in\bR$. Since $\rho[u_i, w_i]$ is a decreasing function, all $\rho[u_i, w_i]\ge c$ on $(0,\tau)\supset(0,\lambda)$. 
This has the consequence that Fatou's lemma applies and gives
\begin{equation}      
\liminf_{i\to\infty}\int^\lambda_0\rho[u_i, w_i]\ge\int^\lambda_0\rho[u,w].
\end{equation}


Represent the given $u,v,w\in\cE$ as decreasing limits of $u_j,v_j,w_j\in\cE$ for which (5.9) holds, for example because they are in $\cH$, see Lemma 4.3. Using Lemma 5.10
\begin{equation}
\begin{aligned}   
\int_0^\lambda\rho[u_i,w] \le\int_0^\lambda\rho[u_i,w_k]&\le
\int_0^\lambda\rho[u_i,v_j]+\int_0^\lambda\rho[v_j,w_k]\\&\le
\int_0^\lambda\rho[u,v_j]+\int_0^\lambda\rho[v,w_k].
\end{aligned}
\end{equation}
The last two integrands decrease to
$\rho[u,v]$, $\rho[v,w]$, and are uniformly bounded above if $u,v$ happen to be bounded.
If we let $i,j,k\to\infty$, (5.9) then follows by (5.10) and by monotone convergence. This takes care of
the theorem if $u,v$ are bounded; $w$ can be arbitrary.

But this means that in our choice of $u_j,v_j,w_j$ we can take $w_j=w$. In the limit (5.11) then gives (5.9)
under the sole assumption that $u$ is bounded. Hence in (5.11) we can take $v_j=v$ as well; 
letting $i\to\infty$ we then obtain (5.9) in complete generality.  
\end{proof}

\section{Comparison with the flat geometry}    
The geometry of $\cE$ that we have studied so far, through its geodesics, originated from Mabuchi's connection on $\cH$. But 
$\cH$ has a simpler, flat geometry, too, inherited as an open subset of the Fr\'echet space $C^\infty(X)$. The geodesics in this geometry are straight line segments in $\cH\subset C^\infty(X)$, and just like Mabuchi's geodesics, the notion extends to 
$\psh(X,\omega)$ and $\cE$: for any $u,v$ in $\psh(X,\omega)$ or in $\cE$, the connecting segment is contained in the space, 
\cite[Proposition 1.6]{GZ1}. While the geodesics of Definition 1.1 are rather different from straight lines, it turns out that various quantities associated with the two are quite comparable. This is the theme we develop in this section. The reader again will notice connections with Darvas's work, in this case with his estimates of various distances in $\cE$ by simple integrals, 
\cite[Theorem 5.5]{Da1},  \cite[Theorem 6.1]{Da4}.

\begin{thm}      
If $u,v\in\cE$, then
\begin{equation}         
(v-u)^{\star v}\le\rho[u,v]\le(v-u)^{\star u}.
\end{equation}
If also $u\le v$, then
\begin{equation}         
\frac{(v-u)^{\star u}(s)}{n+1}\le\rho[u,v]\Big(\frac se\Big),\quad 0<s<V.
\end{equation}
\end{thm}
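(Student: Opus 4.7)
To prove (6.1), I would first work in $\cH^{1\bar 1}$ using the convexity of the geodesic, then pass to the limit. Let $\psi:[0,1]\to\cH^{1\bar 1}$ be the geodesic joining $u$ to $v$. For each $x\in X$ the function $t\mapsto\psi(t)(x)$ is convex, so
\[
\partial^+_t\psi(0)(x)\le\psi(1)(x)-\psi(0)(x)=v(x)-u(x)\le\partial^-_t\psi(1)(x).
\]
Since decreasing rearrangement is monotone under pointwise inequalities on a common measure space, Lemma~2.3 yields
\[
(v-u)^{\star v}\le(\partial^-_t\psi(1))^{\star v}=\rho[u,v]=(\partial^+_t\psi(0))^{\star u}\le(v-u)^{\star u}.
\]
To extend to $u,v\in\cE$ I would approximate by $u_j,v_j\in\cH^{1\bar 1}$ decreasing to $u,v$: Theorems~5.1 and~5.6 give $\rho[u_j,v_j]\to\rho[u,v]$ at continuity points, while $(v_j-u_j)^{\star u_j}\to(v-u)^{\star u}$ and $(v_j-u_j)^{\star v_j}\to(v-u)^{\star v}$ at continuity points by \cite[Lemma~3.6]{L2}, using the hereditary tightness of $\{\mu_{u_j}\}$ and $\{\mu_{v_j}\}$ from He's theorem (exactly as in the proof of Lemma~3.3).

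For (6.2) with $u\le v$ in $\cH^{1\bar 1}$, the key ingredient is the energy comparison
\[
\int_X(v-u)\,d\mu_u\le(n+1)\bigl(E(v)-E(u)\bigr)=(n+1)\int_0^V\rho[u,v],
\]
which follows from $(n+1)(E(v)-E(u))=\sum_{k=0}^n\int_X(v-u)\,\omega_u^{n-k}\wedge\omega_v^k$ by bounding each mixed integral below by $\int_X(v-u)\,d\mu_u$ (valid because $u\le v$). To upgrade to a pointwise statement I would apply this comparison to truncated pairs $(u,v_c)$ with $v_c=u\vee(v-c)$, $c\ge 0$. Then $v_c-u=(v-u-c)_+$, and an analogue of Lemma~4.1 for the $\vee$-envelope (derivable from Darvas's identity $\{\partial^+\psi(0)\ge\tau\}=\{u\wedge(v-\tau)=u\}$, noting that $\theta\vee u$ is itself a subgeodesic joining $u$ to $u\vee v$) gives $\rho[u,v_c]=(\rho[u,v]-c)_+$. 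Substituting into the energy comparison yields the one-parameter family
\[
\int_0^V\bigl((v-u)^{\star u}-c\bigr)_+\,ds\le(n+1)\int_0^V\bigl(\rho[u,v]-c\bigr)_+\,ds,\qquad c\ge 0.
\]
The pointwise bound $(v-u)^{\star u}(s)\le(n+1)\rho[u,v](s/e)$ would then follow by an optimization over $c$ of Hardy--Littlewood type: fix $s$, split the integrals at the breakpoint $s/e$, and balance the contribution of $(\rho[u,v]-c)_+$ on $(0,s/e]$ against $((v-u)^{\star u}-c)_+$ on $(0,s]$, exploiting the monotone structure of both sides. The passage to general $u,v\in\cE$ with $u\le v$ is then by monotone approximation, as in (6.1).

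\textbf{Main obstacle.} The crux is the optimization that produces the dilation $s\mapsto s/e$ with the sharp constant $n+1$. The energy identity cleanly furnishes the factor $n+1$, but turning the integrated family into a pointwise inequality with exponent exactly $1/e$ is delicate: a naive use of the integrated bound also involves the potentially large value $\rho[u,v](0^+)$, so one must instead exploit the integrated inequality on overlapping subintervals $(0,s/e)$ and $(s/e,s)$ separately, carefully using that both $(v-u)^{\star u}$ and $\rho[u,v]$ are decreasing. A secondary technicality is that $v_c=u\vee(v-c)$ is only the max of two $\cH^{1\bar 1}$ functions and need not itself lie in $\cH^{1\bar 1}$, so one may have to pre-approximate $v_c$ from above within $\cH^{1\bar 1}$ and invoke Theorem~5.6 before applying the formula $\rho[u,v_c]=(\rho[u,v]-c)_+$.
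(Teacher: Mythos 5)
Your treatment of (6.1) is essentially the paper's: in $\cH^{1\bar1}$ one compares the geodesic with the affine path via convexity, exactly as in the paper, and then approximates. One caution on the approximation step: for unbounded $u,v\in\cE$ the paper does \emph{not} claim $(v_j-u_j)^{\star u_j}\to(v-u)^{\star u}$ for arbitrary decreasing $u_j,v_j\in\cH^{1\bar1}$. It first treats $u,v\in\cE^\infty$, where convergence in capacity (Lemma 6.2a) applies, and then passes to general $u,v$ through the truncations $u\vee k$, $v\vee j$, for which only the one--sided $\limsup/\liminf$ inequalities of Lemma 6.2b are available. Your blanket appeal to hereditary tightness for general unbounded decreasing limits is not justified as stated, although the one--sided bounds you actually need can be recovered along the paper's lines.

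The genuine gap is in (6.2). You propose to deduce the pointwise bound $(v-u)^{\star u}(s)\le(n+1)\rho[u,v](s/e)$ from the family of integrated inequalities $\int_0^V\big((v-u)^{\star u}-c\big)_+\le(n+1)\int_0^V\big(\rho[u,v]-c\big)_+$, $c\ge0$, by ``optimization over $c$''. For decreasing nonnegative functions this implication is simply false, so no such optimization exists. Take $V=1$, $n+1=2$, $f=1_{(0,\sigma)}$, $g=e\cdot 1_{(0,\delta)}$ with $\delta=1/100$, $\sigma=2e\delta$. Then $\int_0^1(f-c)_+=\sigma(1-c)_+=2e\delta(1-c)_+\le 2\delta(e-c)_+=2\int_0^1(g-c)_+$ for every $c\ge0$, yet for $s\in(e\delta,2e\delta)$ one has $f(s)=1>0=2g(s/e)$. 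Since your plan invokes only the integrated family together with monotonicity, the ``main obstacle'' you flag is not a technicality but a missing idea, and the auxiliary claims (the identity $\rho[u,v_c]=(\rho[u,v]-c)_+$, the pre-approximation of $v_c$) do not supply it. The paper obtains the dilation $s\mapsto s/e$ and the constant $n+1$ in one stroke from the auxiliary potential $w=(nu+v)/(n+1)$: the measure inequality $\mu_w\ge\big(n/(n+1)\big)^n\mu_u>e^{-1}\mu_u$ converts a rearrangement with respect to $\mu_u$ at $s$ into one with respect to $\mu_w$ at $s/e$, and then (6.1) applied to the pair $(u,w)$, together with $w-u=(v-u)/(n+1)$ and the monotonicity $\rho[u,w]\le\rho[u,v]$ of Lemma 5.10, gives (6.2). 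That is the step your argument is missing.
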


Here $v-u$ is defined and finite away from a pluripolar set, which is of $\mu_u$, $\mu_v$ measure 0. It is the velocity vector of 
the rectilinear path  $u+t(v-u)$, $0\le t\le 1$, joining $u$ and $v$. Thus the theorem compares the decreasing rearrangements of this velocity vector, when its footpoint is placed at one or the other endpoint, with the decreasing rearrangement of the velocity of the connecting geodesic.

\begin{lem} 
(a) If $u_j,v_j\in\cE^\infty$ decrease to $u,v\in\cE^\infty$, then a.e.
\[
\lim_{j\to\infty}(v_j-u_j)^{\star u_j}=(v-u)^{\star u},\qquad \lim_{j\to\infty}(v_j-u_j)^{\star v_j}=(v-u)^{\star v}.
\]
(b) If $u,v\in\cE$ then a.e. (cf. (4.1))
\[
\limsup_{k\to-\infty}\limsup_{j\to-\infty}(v\vee j-u\vee k)^{\star u\vee k}\le(v-u)^{\star u},\quad
\liminf_{k\to-\infty}\liminf_{j\to-\infty}(v\vee k-u\vee j)^{\star v\vee k}\ge(v-u)^{\star v}.
\]
\end{lem}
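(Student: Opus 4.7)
For part (a), my plan is to reduce the claim to weak convergence of the pushforward measures $(v_j-u_j)_\ast\mu_{u_j}$ to $(v-u)_\ast\mu_u$ on $\bR$. The sequences $u_j,v_j$ are uniformly bounded (trapped between their first term and the limit), so $v_j-u_j$ takes values in a common compact interval; each pushforward is a finite Borel measure of total mass $V$ with uniformly bounded support. For such measures, weak convergence is equivalent to pointwise convergence of cumulative distribution functions at continuity points of the limit, and by inversion this is the same as pointwise convergence of the decreasing rearrangements at continuity points of the limit rearrangement---exactly the claim. The second relation of (a), with $\mu_{v_j}$ in place of $\mu_{u_j}$, is proved identically.

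The analytic heart is then the convergence
\[
\int_X g(v_j-u_j)\,d\mu_{u_j}\longrightarrow\int_X g(v-u)\,d\mu_u
\]
for every bounded continuous $g\colon\bR\to\bR$. I would split this as $\int[g(v_j-u_j)-g(v-u)]\,d\mu_{u_j}+\int g(v-u)\,d(\mu_{u_j}-\mu_u)$. The second term tends to $0$ because $g(v-u)$ is a bounded quasi-continuous Borel function on $X$ and non-pluripolar Monge--Amp\`ere is continuous along decreasing sequences of bounded $\omega$-plurisubharmonic potentials (Bedford--Taylor, in the framework of [GZ1,GZ2]). For the first term, uniform continuity of $g$ on a bounded range gives $|g(v_j-u_j)-g(v-u)|\le\omega_g\big((v_j-v)+(u_j-u)\big)$ with $\omega_g$ the modulus of continuity; since $v_j-v$ and $u_j-u$ decrease to $0$ in capacity and $\mu_{u_j}$ are uniformly absolutely continuous with respect to capacity (Ko\l{}odziej's uniform estimate for uniformly bounded potentials), this term also vanishes.

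For part (b), I would first take the inner limit with $k$ fixed. As $j\to-\infty$, $v\vee j\searrow v$, so $v\vee j-u\vee k\searrow v-u\vee k$ pointwise on the \emph{fixed} measure space $(X,\mu_{u\vee k})$, with $v>-\infty$ a.e.\ there; monotone convergence of decreasing rearrangements then gives $\limsup_{j\to-\infty}(v\vee j-u\vee k)^{\star u\vee k}=(v-u\vee k)^{\star u\vee k}$ at continuity points. For the outer limit, fix a continuity point $s$ of $(v-u)^{\star u}$, put $L=(v-u)^{\star u}(s)$, and pick $\tau>L$; continuity at $s$ forces $\mu_u(v-u>\tau)<s$ strictly. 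Using the standard identity $\mu_{u\vee k}|_{\{u>k\}}=\mu_u|_{\{u>k\}}$ I obtain
\[
\mu_{u\vee k}(v-u\vee k>\tau)\le\mu_u(v-u>\tau)+\mu_{u\vee k}(\{u\le k\}),
\]
and the last term $\to 0$ because $\mu_{u\vee k}(\{u>k\})=\mu_u(\{u>k\})\nearrow V$ (since $\mu_u$ does not charge the pluripolar set $\{u=-\infty\}$). Hence the right-hand side is $<s$ for $|k|$ large, so $(v-u\vee k)^{\star u\vee k}(s)\le\tau$ by (2.4), and letting $\tau\searrow L$ closes the first inequality. The second inequality is symmetric: now $v\vee k-u\vee j$ is \emph{increasing} as $j\to-\infty$, so the inner $\liminf$ equals $(v\vee k-u)^{\star v\vee k}$, and the outer bound comes from $\mu_{v\vee k}(v\vee k-u>\tau)\ge\mu_v(v-u>\tau,\,v>k)\nearrow\mu_v(v-u>\tau)$.

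The principal obstacle in (a) is the simultaneous drift of integrand and measure in the core estimate, handled by the familiar partnership of convergence in capacity with Ko\l{}odziej's uniform domination. In (b) the delicate point is that $\mu_{u\vee k}$ is not merely a restriction of $\mu_u$ but picks up extra mass (namely $\omega^n|_{\{u<k\}}$) on $\{u\le k\}$; this surplus is small for $|k|$ large, and its vanishing is exactly what permits the $\limsup$ estimate.
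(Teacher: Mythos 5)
Your argument is correct, but it follows a genuinely different route from the paper's in both parts. For (a) the paper simply quotes \cite[Proposition 9.11]{GZ2} to get $v_j-u_j\to v-u$ in capacity and then invokes \cite[Lemma 3.4]{L2}, which already packages the statement ``convergence in capacity of the functions plus convergence of the measures implies a.e.\ convergence of the rearrangements''; you instead re-derive this from scratch by passing to the pushforward measures $(v_j-u_j)_\ast\mu_{u_j}$ and proving their weak convergence, using Bedford--Taylor continuity against quasi-continuous test functions together with the uniform domination $\mu_{u_j}(K)\le C\,\mathrm{Cap}(K)$. That is more self-contained but reproves a cited lemma. For (b) the divergence is more interesting: the paper never identifies the inner limit exactly. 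It uses the monotone comparison $(v\vee j-u\vee k)^{\star u\vee k}\le(v\vee j-u\vee j)^{\star u\vee k}$ for $j\le k$, so that after letting $j\to-\infty$ only a \emph{fixed} function $v-u$ remains, rearranged with respect to the varying measure $\mu_{u\vee k}$; the $k$-limit is then handled by the paper's (6.3), which rests only on $\mu_{u\vee k}(Y)\to\mu_u(Y)$ from \cite[Proposition 10.5]{GZ2}. You instead compute the inner limit exactly as $(v-u\vee k)^{\star u\vee k}$ and then must control a rearrangement in which both the function and the measure depend on $k$; you do this with the plurifine locality identity $\mu_{u\vee k}|_{\{u>k\}}=\mu_u|_{\{u>k\}}$ and the fact that $\mu_{u\vee k}(\{u\le k\})\to0$. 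Both mechanisms are sound (your strict inequality $\mu_u(v-u\ge\tau)<s$ at a continuity point $s$, needed to feed (2.4), does follow from left-continuity of the rearrangement, though you should run the estimate with $\ge\tau$ rather than $>\tau$ to match the hypothesis of (2.4)); the paper's comparison trick avoids any appeal to locality of the Monge--Amp\`ere operator, while yours gives the sharper information that the inner $\limsup$ is an actual limit equal to $(v-u\vee k)^{\star u\vee k}$.
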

\begin{proof}(a) \cite[Proposition 9.11]{GZ2} implies that $v_j-u_j\to v-u$ in capacity, whence  the claim follows by  
\cite[Lemma 3.4]{L2}.

(b) If $Y\subset X$ is Borel, by \cite[Proposition 10.5]{GZ2} $\mu_{u\vee k}(Y)\to\mu_u(Y)$ as $k\to -\infty$. 
This implies for every Borel function $\xi:X\to\bR$, defined a.e. with respect to $\mu_u,\mu_{u\vee k}$,
\begin{equation}       
\lim_{k\to -\infty} \xi^{\star u\vee k}=\xi^{\star u} \quad \text{a.e.}.
\end{equation}
Indeed, in $\lim_k\mu_{u\vee k}(\xi\ge \tau)=\mu_u(\xi\ge \tau)$ set 
$\tau=\xi^{\star u}(s-\varepsilon)+\varepsilon$ with  $\varepsilon\in(0,s)$. Since 
$\mu_u(\xi\ge\tau)\le\mu_u\big(\xi >\xi^{\star u}(s-\varepsilon)\big)\le s-\varepsilon < s$ by (2.3), for sufficiently negative $k$
\[
\mu_{u\vee k}(\xi\ge \tau)< s,\quad\text{and}\quad \xi^{\star u\vee k}(s)<\xi^{\star u}(s-\varepsilon)+\varepsilon
\]
by (2.4). Working with $\tau=\xi^{\star u} (s+\varepsilon)-\varepsilon$ and the set $(\xi>\tau)$, we obtain similarly that 
$\xi^{\star u\vee k}(s)>\xi^{\star u} (s+\varepsilon)-\varepsilon$ when $k$ is sufficiently negative. This proves 
the limit in (6.3) at points of continuity of $\xi^{\star u}$. Of course, (6.3) holds with $u$ replaced by $v$, too.

Now, if $j\le k$,
\[
(v\vee j-u\vee k)^{\star u\vee k}\le (v\vee j-u\vee j)^{\star u\vee k}, \quad 
(v\vee k-u\vee j)^{\star v\vee k}\ge(v\vee j-u\vee j)^{\star v\vee k}).
\]
Letting $j\to-\infty$, then $k\to-\infty$, the estimates follow by Lemma 2.1 and (6.3).
\end{proof}

\begin{proof}[Proof of Theorem 6.1]
Since all functions involved are left continuous, it suffices to prove (6.1), (6.2) on a dense subset of $(0,V)$.
(6.1) will be proved by reduction to the special cases $u,v\in\cH^{1\bar 1}$, respectively $u,v\in\cE^\infty$, and (6.2) will then be 
obtained, perhaps paradoxically, from (6.1).

We start by assuming $u,v\in\cH^{1\bar 1}$. Let $\varphi:[0,1]\to\cH^{1\bar 1}$ be the rectilinear path joining $u,v$, so 
$\varphi(t)=u+t(v-u)$, and $\psi:[0,1]\to \cH^{1\bar 1}$ the geodesic between $u$ and $v$. Since $\varphi(\cdot)(x)$ is linear,
$\psi(\cdot)(x)$ is convex for every $x\in X$, and the two agree at 0 and 1,
\[
\partial^+\psi(0)\le\partial^+\varphi(0)=v-u=\partial^-\varphi(1)\le\partial^-\psi(1).
\]
Passing to decreasing rearrangements we obtain (6.1).

Next assume $u,v$ are bounded. Choose $u_j, v_j\in\cH^{1\bar 1}$ that decrease to $u,v$; then
\[
(v_j-u_j)^{\star v_j}\le \rho[u_j, v_j]\le(v_j-u_j)^{\star u_j}.
\]
Hence Lemma 6.2a and Theorem 5.6 imply (6.1).

Finally, with general $u,v\in\cE$, by what has been proved already, if $j\le k$
\[
(v\vee k-u\vee j)^{\star v\vee k}\le \rho[u\vee j, v\vee k], \qquad
\rho[u\vee k, v\vee j]\le(v\vee j-u\vee k)^{\star u\vee k}.
\]
Letting $j\to -\infty$ then $k\to-\infty$, (6.1) follows from Lemmas 2.1, 6.2b and Theorem 5.6.

To prove (6.2), when $u\le v$ we introduce $w=(nu+v)/(n+1)$, the greedy version of a trick that in this context
goes back to proofs of 
\cite[Proposition 10.7]{GZ2} and \cite[Theorem 5.5]{Da1}. Thus $u\le w\le v$. We claim  (independently of the assumption 
$u\le v$)
\begin{equation}       
\mu_w\ge\Big(\frac{n}{n+1}\Big)^n\mu_u.
\end{equation}
If $u,v,w$ are smooth, this follows from
\[
\omega+i\partial\op w=\frac{n(\omega+i\partial\op u)}{n+1}+\frac{\omega+i\partial\op v}{n+1}\ge
\frac{n(\omega+i\partial\op u)}{n+1}.
\]
In general we choose $u_j, v_j\in\cH$ that decrease to $u,v$ as $j\to\infty$. Then $\mu_u, \mu_w$, as weak limits of the corresponding $\mu_{u_j}, \mu_{w_j}$, also satisfy (6.4).

(6.4) and (2.3)  imply for Borel functions $\xi:X\to\bR$, $0<s<V$, and $\varepsilon>0$
\[
\mu_w\big(\xi>\xi^{\star u}(s)-\varepsilon\big)\ge \mu_w\big(\xi\ge \xi^{\star u}(s)\big)\ge 
\Big(\frac{n}{n+1}\Big)^n \mu_u\big(\xi\ge\xi^{\star u}(s)\big)>\frac{s}{e}.
\]
Hence by (2.4)
$\xi^{\star w}(s/e)>\xi^{\star u}(s)-\varepsilon$, and so $\xi^{\star w}(s/e)\ge \xi^{\star u}(s)$.
Applying this with $\xi=w-u$, (6.1) and Lemma 5.10 yield 
\[
\frac{(v-u)^{\star u}(s)}{n+1}=(w-u)^{\star u}(s)\le (w-u)^{\star w}\Big(\frac{s}{e}\Big)\le
\rho[u,w]\Big(\frac{s}{e}\Big)\le\rho[u,v]\Big(\frac{s}{e}\Big),
\]
as claimed.
\end{proof}

\begin{cor}        
If $\varphi:[a,b]\to\cE(\omega)$ is a geodesic, then for $a\le t< b$, $a<\tau\le b$
\begin{equation}      
(\partial^-_\tau\varphi(\tau))^\star\le\rho_\varphi\le(\partial^+_t\varphi(t))^\star.
\end{equation}
If $\varphi$ increases $(\varphi(t)\le\varphi(\tau)$ when $t\le\tau$), then
\[(\partial^+_\tau\varphi(\tau))^\star(s)\le (n+1)\rho_\varphi(s/e),\quad 0<s<V.\]
\end{cor}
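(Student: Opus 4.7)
The plan is to derive (6.5) and the increasing-case bound by applying Theorem 6.1 to the restriction of $\varphi$ to short subintervals and then passing to the limit. The key starting observation is that for any $a\le t<t+h\le b$, the rise of the subgeodesic $\varphi|[t,t+h]$ agrees with $\rho_\varphi$ (as noted after Definition 1.3), so by Definition 5.2 one has $\rho[\varphi(t),\varphi(t+h)]=h\rho_\varphi$. Feeding $u=\varphi(t)$ and $v=\varphi(t+h)$ into (6.1), we obtain
\[
\left(\frac{\varphi(t+h)-\varphi(t)}{h}\right)^{\star\varphi(t+h)}\le\rho_\varphi\le
\left(\frac{\varphi(t+h)-\varphi(t)}{h}\right)^{\star\varphi(t)}.
\]

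For the upper half of (6.5), fix $t\in[a,b)$ and let $h\searrow 0$. By convexity of $s\mapsto\varphi(s)(x)$, the forward difference quotient is monotone decreasing in $h$ and converges pointwise (where $\varphi(t)(x)>-\infty$, hence $\mu_{\varphi(t)}$-a.e.) to $\partial^+_t\varphi(t)(x)$. Lemma 2.1, applied on $(X,\mu_{\varphi(t)})$, gives convergence of the corresponding decreasing rearrangements at every continuity point of $(\partial^+_t\varphi(t))^\star$, whence $\rho_\varphi\le(\partial^+_t\varphi(t))^\star$ on a dense set; since both sides are left-continuous decreasing functions (usc decreasing $\Leftrightarrow$ left-continuous), the inequality extends to all of $(0,V)$. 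Symmetrically, for the lower half, fix $\tau\in(a,b]$ and take the left endpoint to be $\tau-h$ in the preceding display; by convexity the backward difference quotient $(\varphi(\tau)-\varphi(\tau-h))/h$ increases to $\partial^-_\tau\varphi(\tau)$ as $h\searrow 0$ $\mu_{\varphi(\tau)}$-a.e., and Lemma 2.1 together with left-continuity yields $(\partial^-_\tau\varphi(\tau))^\star\le\rho_\varphi$.

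For the increasing case, the assumption $\varphi(\tau)\le\varphi(\tau+h)$ puts us in position to invoke inequality (6.2) with $u=\varphi(\tau)$, $v=\varphi(\tau+h)$: for $\tau\in[a,b)$ and $0<h<b-\tau$,
\[
\frac{(\varphi(\tau+h)-\varphi(\tau))^{\star\varphi(\tau)}(s)}{n+1}\le\rho[\varphi(\tau),\varphi(\tau+h)](s/e)=h\rho_\varphi(s/e).
\]
Dividing by $h$ and letting $h\searrow 0$, the forward difference quotient decreases to $\partial^+_\tau\varphi(\tau)$ $\mu_{\varphi(\tau)}$-a.e., and Lemma 2.1 converts this into the stated bound $(\partial^+_\tau\varphi(\tau))^\star(s)\le(n+1)\rho_\varphi(s/e)$, once more using left-continuity to upgrade the a.e. inequality.

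The main technical nuisance is that at the endpoints $t=a$ and $\tau=b$ the one-sided derivatives can take the value $-\infty$ on sets of positive $\mu_{\varphi(t)}$-measure, so that Lemma 2.1 is not directly applicable to the limit function as stated. I expect to handle this by first truncating the difference quotients from below at a constant $-N$, applying Lemma 2.1 to the truncated sequence whose limit is finite, and then letting $N\to\infty$; monotonicity of the difference quotients in $h$ ensures that the rearrangements behave well under this double limit. Beyond this point, everything is routine bookkeeping built on Theorem 6.1 and the conservation of rise along subintervals.
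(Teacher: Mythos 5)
Your proposal is correct and follows essentially the same route as the paper: apply Theorem 6.1 to $\varphi(\sigma),\varphi(\tau)$ on a short subinterval, use that the rise of the restricted geodesic equals $\rho_\varphi$ so that $\rho[\varphi(\sigma),\varphi(\tau)]=(\tau-\sigma)\rho_\varphi$, divide by the interval length, and let it shrink, invoking convexity of $\varphi(\cdot)(x)$ and Lemma 2.1 to pass the monotone difference quotients to one-sided derivatives (the increasing case being handled the same way via (6.2)). Your extra care about possibly infinite one-sided derivatives at the endpoints, handled by truncation, is a reasonable supplement to what the paper dismisses with ``the other estimates are obtained similarly.''
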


\begin{proof}
By Theorem 6.1, if $a\le\sigma<\tau$
\[
\Big(\frac{\varphi(\tau)-\varphi(\sigma)}{\tau-\sigma}\Big)^{\star\varphi(\tau)}=\frac{(\varphi(\tau)-
\varphi(\sigma))^{\star\varphi(\tau)}}{\tau-\sigma}\le\frac{\rho[\varphi(\sigma), \varphi(\tau)]}{\tau-\sigma}=\rho_\varphi.
\]
Letting $\sigma\to\tau$ we obtain the first inequality in (6.5) in light of Lemma 2.1. The other estimates are obtained similarly.
\end{proof}

\begin{cor}        
If $u,v\in\cE$ then $u\le v$ is equivalent to $\rho[u,v]\ge 0$.
\end{cor}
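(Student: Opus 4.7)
For the easy direction, suppose $u\le v$. Then $u\wedge v=u$, so the identity $\rho[u\wedge v,v]=\max(0,\rho[u,v])$ from Theorem 5.5 reduces to $\rho[u,v]=\max(0,\rho[u,v])$, which forces $\rho[u,v]\ge 0$.

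For the converse, assume $\rho[u,v]\ge 0$ and set $w=u\wedge v\le u$. Theorem 5.5 gives $\rho[u,w]=\min(0,\rho[u,v])\equiv 0$, and it suffices to show $u=w$, since then $u=u\wedge v\le v$. The first step is to extend the reversal identity (3.2) from $\cH^{1\bar 1}$ to $\cE$: choose decreasing sequences $u_j,w_j\in\cH^{1\bar 1}$ approximating $u,w$, apply (3.2) to each pair, and pass to the limit via Theorem 5.1 and Definition 5.2 to obtain
\[
\rho[w,u](\lambda)=-\rho[u,w](V-\lambda)\qquad\text{for a.e.\ }\lambda\in(0,V).
\]
Since $\rho[u,w]\equiv 0$, this gives $\rho[w,u]=0$ a.e., and since $\rho[w,u]$ is decreasing and upper semicontinuous, in fact $\rho[w,u]\equiv 0$.

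Next I would apply inequality (6.2) of Theorem 6.1 to the pair $w\le u$: $(u-w)^{\star w}(s)\le (n+1)\rho[w,u](s/e)=0$ for every $s\in(0,V)$. Because $u-w\ge 0$ pointwise, its $\mu_w$-decreasing rearrangement is non-negative, and hence $(u-w)^{\star w}\equiv 0$. This means $u=w$ holds $\mu_w$-almost everywhere. Finally I would invoke the classical domination principle in $\cE(X,\omega)$ (Dinew; cf.\ \cite{Da2}): if $\alpha,\beta\in\cE$ satisfy $\alpha\le\beta$ $\mu_\beta$-a.e., then $\alpha\le\beta$ on $X$. Taking $\alpha=u$ and $\beta=w$, the hypothesis holds (with equality) $\mu_w$-a.e., yielding $u\le w$ on $X$; combined with $u\ge w$, we conclude $u=w$.

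The main obstacle is the final step: promoting a $\mu_w$-a.e.\ identity to a pointwise identity requires the domination principle, a standard but non-trivial external input from pluripotential theory that cannot be extracted from the results developed so far in the paper. A secondary technical point is that (3.2), proved in Section 3 only within $\cH^{1\bar 1}$, must first be extended to $\cE$ by the approximation machinery of Section 5; this extension is routine once Theorem 5.1/Definition 5.2 are in hand.
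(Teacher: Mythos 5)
Your proof is correct and follows essentially the same route as the paper: the easy direction via the Pythagorean identity of Theorem 5.5, and the converse via Theorem 5.5, the estimate (6.2) of Theorem 6.1 applied to the pair $u\wedge v\le u$, and Dinew's domination principle \cite[Proposition 5.9]{BL}. Your explicit extension of the reversal identity (3.2) to $\cE$, used to pass from $\rho[u,u\wedge v]=0$ to $\rho[u\wedge v,u]=0$, supplies a step the paper leaves implicit, and your worry about the domination principle is unfounded in context --- the paper invokes exactly the same external result.
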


\begin{proof}
By Theorem 5.5 $\rho[u,v]\ge 0$ is equivalent to $\rho[u,u\wedge v]=0$. This latter certainly holds if $u\le v$. Conversely, suppose $\rho[u,u\wedge v]=0$. By Theorem 6.1
\[
0=(n+1)\rho[u\wedge v, u](s/e)\ge (u-u\wedge v)^{\star u\wedge v}(s).
\]
In other words, $u\le u\wedge v$ holds $\mu_{u\wedge v}$--a.e. According to S. Dinew, this implies 
$u\le u\wedge v$ everywhere, i.e. $u\le v$. Dinew does not seem to have published his proof, but communicated it to Levenberg, and the standard reference to the result is \cite[Proposition 5.9]{BL}.
\end{proof}

The following estimate we will need in section 10.
\begin{lem} 
If $u,v,w\in\cE$ and $0<\sigma<s<V$, then
\begin{equation}  
\rho[u\wedge v, w](s)\le\max\big((w-u)^{\star u}(\sigma),\,(w-v)^{\star v}(s-\sigma)\big).
\end{equation}
\end{lem}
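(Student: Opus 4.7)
The plan is to combine Theorem 6.1 with the mass decomposition of $\mu_{u\wedge v}$ employed in the proof of Lemma 4.2. Writing $\tau:=\max\bigl((w-u)^{\star u}(\sigma),\,(w-v)^{\star v}(s-\sigma)\bigr)$, Theorem 6.1 applied to the pair $(u\wedge v, w)$ (in place of $(u,v)$) gives
\[
\rho[u\wedge v, w](s)\le (w-u\wedge v)^{\star u\wedge v}(s),
\]
so the task reduces to bounding this rearrangement by $\tau$.

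To this end I would fix $\varepsilon>0$ and set $\tau':=\tau+\varepsilon$. Since $(w-u)^{\star u}(\sigma)<\tau'$ strictly, the left continuity and monotonicity of decreasing rearrangements together with equidistribution yield $\mu_u(w-u\ge\tau')<\sigma$; likewise $\mu_v(w-v\ge\tau')<s-\sigma$. The key step is then to apply the decomposition formula (the same one used in the proof of Lemma 4.2)
\[
\mu_{u\wedge v}(Y)=\mu_u\bigl(Y\cap(u=u\wedge v)\bigr)+\mu_v\bigl(Y\cap(u>u\wedge v=v)\bigr)
\]
with $Y=(w-u\wedge v\ge\tau')$. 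Because $w-u\wedge v=w-u$ on $(u=u\wedge v)$ and $w-u\wedge v=w-v$ on $(v=u\wedge v)$, this yields
\[
\mu_{u\wedge v}(w-u\wedge v\ge\tau')\le\mu_u(w-u\ge\tau')+\mu_v(w-v\ge\tau')<s.
\]
Now (2.4) forces $(w-u\wedge v)^{\star u\wedge v}(s)<\tau+\varepsilon$, and letting $\varepsilon\downarrow 0$ finishes the argument.

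The main concern is whether the mass decomposition formula above is available for arbitrary $u,v\in\cE$, rather than only the $\cH^{1\bar 1}$ setting in which it was invoked in the proof of Lemma 4.2. Darvas's Proposition 2.2 of \cite{Da2} already covers this generality, so this is not a genuine obstacle; failing that, one approximates $u,v,w$ by decreasing sequences in $\cH^{1\bar 1}$, establishes the bound there, and passes to the limit using Theorem 5.6 together with Lemmas 2.1 and 6.2.
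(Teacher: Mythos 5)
Your argument is correct, and it reaches the paper's key intermediate inequality (6.7), namely $(w-u\wedge v)^{\star u\wedge v}(s)\le\max\big((w-u)^{\star u}(\sigma),(w-v)^{\star v}(s-\sigma)\big)$, by a somewhat different and arguably cleaner route. The paper first normalizes $u,v\le w$, splits $w-u\wedge v=\max(\xi,\eta)$ with $\xi=(w-u\wedge v)1_{u\le v}$ and $\eta=(w-u\wedge v)1_{u\ge v}$, bounds $\xi^{\star u\wedge v}\le(w-u)^{\star u}$ and $\eta^{\star u\wedge v}\le(w-v)^{\star v}$ using the partition formula (4.6), and then combines the two via the rearrangement inequality Lemma 2.2 applied to $F=\max$; it also only gets the estimate on a dense set of $(\sigma,s)$ and finishes by left continuity. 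You instead count the $\mu_{u\wedge v}$--mass of the single superlevel set $(w-u\wedge v\ge\tau')$ directly through (4.6) and conclude with (2.4); this merges the two steps, needs no normalization of $w$, avoids Lemma 2.2 altogether, and works for every $(\sigma,s)$ at once. Your derivation of the strict inequality $\mu_u(w-u\ge\tau')<\sigma$ from left continuity of the decreasing rearrangement is exactly the point that makes (2.4) applicable, and it is correct.

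The one caveat concerns generality. Your primary route invokes the partition formula (4.6) for arbitrary $u,v\in\cE$, whereas the paper is careful to use (4.6) only when the functions involved lie in $\cH^{1\bar1}$ (with $w\in\cE^\infty$), and reaches the general case purely by approximation. If you do fall back on approximation, be aware that it is a two--stage process, not a single limit: Lemma 6.2a only treats decreasing sequences with \emph{bounded} limits, so one first passes from $\cH^{1\bar1}$ to $\cE^\infty$ using Theorem 5.6 and Lemma 6.2a, and then from $\cE^\infty$ to $\cE$ using the truncations $u\vee k$, $v\vee k$, $w\vee j$, the one--sided bounds of Lemma 6.2b, and the iterated limits $j\to-\infty$ followed by $k\to-\infty$. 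This is exactly what the paper does, and your outline, though compressed, names the right ingredients.
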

\begin{proof} As in Lemma 2.2, it suffices to prove for a dense set of $(\sigma,s)$. First assume $u,v\in\cH^{1\bar1}$ and $w\in\cE^\infty$. Since adding a constant to $w$ does not affect the validity of (6.6), we will also assume
$u,v\le w$. Define
\begin{equation*} 
\xi= (w-u\wedge v)1_{u\le v}, \qquad \eta=( w-u\wedge v)1_{u\ge v}. 
\end{equation*}
Thus $w-u\wedge v=\max(\xi,\eta)$. By the partition formula (4.6), if $\tau\ge 0$, 
\begin{align*}
\mu_{u\wedge v}(\xi>\tau) &=\mu_u(\xi>\tau, u\wedge v=u)+\mu_v(\xi>\tau, u> u\wedge v=v)\\
&=\mu_u(w-u\wedge v>\tau, u\wedge v=u)+0\le\mu_u(w-u>\tau).
\end{align*}
Let $s\in(0, V)$ and $\tau=(w-u)^{\star u}(s)$. With $\delta>0$ (2.3) implies
\[
\mu_{u\wedge v}(\xi\ge\tau+\delta)\le\mu_u\big(w-u>(w-u)^{\star u}(s)\big)\le s<s+\delta.
\]
Hence $\xi^{\star u\wedge v}(s+\delta)< (w-u)^{\star u}(s)+\delta$ by (2.4). Letting $\delta\to 0$, 
$\xi^{\star u\wedge v}(s)\le (w-u)^{\star u}(s)$ follows if $\xi^{\star u\wedge v}$ is continuous at $s$, 
so on a dense subset of  $(0,V)$. In fact
\[
\xi^{\star u\wedge v}\le(w-u)^{\star u}\qquad\text{and similarly}\qquad \eta^{\star u\wedge v}\le(w-v)^{\star v}
\]
everywhere, since the functions involved are left continuous. Lemma 2.2 implies
\begin{equation}
\begin{aligned}      
(w-&u\wedge v)^{\star u\wedge v}(s)= \max(\xi,\,\eta)^{\star u\wedge v}(s)\\
&\le \max\big(\xi^{\star u\wedge v}(\sigma),\,\eta^{\star u\wedge v}(s-\sigma)\big)
\le\max\big((w-u)^{\star u}(\sigma),\,(w-v)^{\star v}(s-\sigma)\big).
\end{aligned}
\end{equation}
In this form it is hard to carry over the estimate to $u,v\notin\cH^{1\bar1}$, but (6.7) implies (6.6)
in view of Theorem 6.1. Thus we proved (6.6) when $u,v\in\cH^{1\bar1}$, $w\in\cE^\infty$.

From this we obtain (6.6) for $u,v,w\in\cE^\infty$ by decreasing to $u,v$ with $u_j,v_j\in\cH^{1\bar1}$, and using Theorem 5.6 and Lemma 6.2a.
Thus for arbitrary $u,v,w\in \cE$ and $j,k\in\bZ$
\[
\rho[(u\vee k)\wedge (v\vee k), w\vee j](s)\le
\max\big((w\vee j-u\vee k)^{\star u\vee k}(\sigma),\,(w\vee j-v\vee k)^{\star v\vee k}(s-\sigma)\big).
\]
If we let $j\to -\infty$, then $k\to-\infty$, in the limit we obtain (6.6) by virtue of Theorem 5.6 and Lemma 6.2b.

\end{proof}

\section{The meaning of the rise}     

This section will be short. The meaning of the rise of a geodesic $\varphi:[a,b]\to\cH^{1\bar 1}$ is clear, it is 
$(\partial^\pm_t\varphi(t))^\star$ for any $t$, by definition. However, for a  geodesic in $\cE$ we do not understand the meaning 
of its rise directly. One could hope that the rise of a general geodesic can still be expressed in terms of velocities 
$\partial^\pm\varphi$, but we do not know how. Corollary 6.3 estimates $\rho_\varphi$ in terms of these velocities; this is the most we can say in general.

Still, if a geodesic $\varphi:[a,b]\to\cE$ passes through a single point in $\cH^{1\bar 1}$, its rise can be expressed by the velocities there.

\begin{thm}     
Let $\varphi:[a,b]\to \cE$ be a geodesic. If $\varphi(\alpha)\in\cH^{1\bar 1}$ with some $\alpha\in[a,b]$, then
$\rho_\varphi=(\partial^\pm\varphi(\alpha))^\star$,
\end{thm}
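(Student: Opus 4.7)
The plan is to compare $\varphi$, on a short interval abutting $\alpha$, with geodesics in $\cH^{1\bar 1}$ that share the endpoint $\varphi(\alpha)$; this fixes the reference measure as $\mu_{\varphi(\alpha)}$ throughout and brings Lemmas 2.3 and 2.1 to bear. I argue for the right derivative when $\alpha<b$; the left derivative case when $\alpha>a$ is symmetric. Fix $h\in(0,b-\alpha]$ and choose $w_j\in\cH^{1\bar 1}$ decreasing to $\varphi(\alpha+h)$, which is possible by the approximation results recalled in the introduction. Let $\psi_j:[\alpha,\alpha+h]\to\cH^{1\bar 1}$ be the geodesic joining $\varphi(\alpha)$ and $w_j$. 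Then $\psi_j\downarrow\varphi|[\alpha,\alpha+h]$ by \cite[Proposition 3.15]{Da3}, and Definition 5.2 together with Theorem 5.1 gives
\[
\rho_{\psi_j}=\rho[\varphi(\alpha),w_j]/h\longrightarrow \rho[\varphi(\alpha),\varphi(\alpha+h)]/h=\rho_\varphi\quad\text{a.e.\ as } j\to\infty.
\]
Crucially, because $\psi_j(\alpha)=\varphi(\alpha)$, Lemma 2.3 identifies $\rho_{\psi_j}$ as $(\partial^+_t\psi_j(\alpha))^{\star\varphi(\alpha)}$, a decreasing rearrangement with respect to the single measure $\mu_{\varphi(\alpha)}$.

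The core task is then to show $\partial^+_t\psi_j(\alpha)\to\partial^+_t\varphi(\alpha)$ at $\mu_{\varphi(\alpha)}$-a.e.\ $x$, after which Lemma 2.1 yields $(\partial^+_t\psi_j(\alpha))^{\star\varphi(\alpha)}\to(\partial^+_t\varphi(\alpha))^{\star\varphi(\alpha)}$ a.e., and comparison with the previous display finishes the argument. For this I plan a two-sided squeeze. The supremum characterization (1.1) gives $\psi_j\ge\varphi$ on $[\alpha,\alpha+h]$ with equality at $\alpha$; comparing right difference quotients at $\alpha$ and letting the increment $s\downarrow 0$ yields $\partial^+_t\psi_j(\alpha)(x)\ge\partial^+_t\varphi(\alpha)(x)$. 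For the matching upper bound, convexity of $\psi_j(\cdot)(x)$ in $t$ gives $\partial^+_t\psi_j(\alpha)(x)\le\bigl(\psi_j(\alpha+s)(x)-\varphi(\alpha)(x)\bigr)/s$; taking $\limsup_j$ with $s$ fixed and using $\psi_j(\alpha+s)\downarrow\varphi(\alpha+s)$, then letting $s\downarrow 0$, produces $\limsup_j\partial^+_t\psi_j(\alpha)(x)\le\partial^+_t\varphi(\alpha)(x)$. Both inequalities apply wherever $\varphi(\alpha+s)(x)$ is finite for small $s>0$, which is a set of full $\mu_{\varphi(\alpha)}$-measure because $\varphi(\alpha)\in\cH^{1\bar 1}$ forces $\mu_{\varphi(\alpha)}$ to be absolutely continuous with respect to a smooth volume and therefore oblivious to pluripolar sets.

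The main obstacle, and the reason the hypothesis $\varphi(\alpha)\in\cH^{1\bar 1}$ enters, is to keep the reference measure fixed along the approximation; without the identity $\psi_j(\alpha)=\varphi(\alpha)$ one would face rearrangements against a varying family of measures, and Lemma 2.1 could not be applied directly. To handle the left derivative I run the symmetric construction on $[\alpha-h,\alpha]$: approximate $\varphi(\alpha-h)$ from above by $u_j\in\cH^{1\bar 1}$, let $\tilde\psi_j$ be the $\cH^{1\bar 1}$-geodesic from $u_j$ to $\varphi(\alpha)$, and squeeze $\partial^-_t\tilde\psi_j(\alpha)$ between the lower chord slope $\bigl(\varphi(\alpha)(x)-\tilde\psi_j(\alpha-s)(x)\bigr)/s$ (from convexity) and the upper bound $\partial^-_t\tilde\psi_j(\alpha)\le\partial^-_t\varphi(\alpha)$ coming from $\tilde\psi_j\ge\varphi$ with equality at $\alpha$. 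At the endpoint cases $\alpha=a$ and $\alpha=b$ only one of the two arguments is needed.
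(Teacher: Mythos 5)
Your proposal is correct and follows essentially the same route as the paper: fix one endpoint of the approximating $\cH^{1\bar 1}$-geodesics at $\varphi(\alpha)$, let the other endpoint decrease, and use the convexity squeeze on difference quotients to get pointwise convergence of $\partial^+\psi_j(\alpha)$ to $\partial^+\varphi(\alpha)$, concluding via Lemma 2.1 and Theorem 5.7. The paper likewise reduces to the case $\alpha=a$ by cutting (and reversing) the geodesic, so your restriction to $[\alpha,\alpha+h]$ is the same device.
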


\begin{proof}
We will prove when $\alpha=a$. The general result can be obtained from this special case by cutting the geodesic in two (and perhaps reversing it).

Construct geodesics $\varphi_j:[a,b]\to\cH^{1\bar 1}$ such that $\varphi_j(a)=\varphi(a)$ and $\varphi_j(b)$ decreases to 
$\varphi(b)$. By \cite[Proposition 3.15]{Da3} $\varphi_j(t)$ decreases to $\varphi(t)$ for all $t\in[a,b]$. If $a<s<t\le b$, 
by convexity
\begin{align*}
\frac{\varphi(s)-\varphi(a)}{s-a}&\le\frac{\varphi_j(s)-\varphi_j(a)}{s-a}\le \frac{\varphi_j(t)-\varphi_j(a)}{t-a}; 
\qquad \text{ letting } s\to a,\\
\partial^+\varphi(a)&\le \partial^+\varphi_j(a)\le \frac{\varphi_j(t)-\varphi_j(a)}{t-a}.
\end{align*}
Letting next $j\to\infty$, then $t\to a$ we obtain
\[\partial^+\varphi(a)\le \liminf_{j\to\infty} \partial^+\varphi_j(a)\le\limsup_{j\to\infty}\partial^+\varphi_j(a)\le\partial^+\varphi(a),\]
or $\partial^+\varphi(a)=\lim_j\partial^+\varphi_j(a)$. Thus, by Theorem 5.7 and Lemma 2.1
\[\rho_\varphi=\lim_j\rho_{\varphi_j}=\lim_j(\partial^+\varphi_j(a))^\star=(\partial^+\varphi(a))^\star,\]
first almost everywhere, but as a consequence, in fact everywhere on $(0,V)$.
\end{proof}

The following is immediate:
\begin{cor}        
If $\varphi:[a,b]\to\cE$ is a geodesic and $\varphi(\alpha)\in\cH^{1\bar 1}$ for some $\alpha\in(a,b)$, then 
$\partial^-\varphi(\alpha)=\partial^+\varphi(\alpha)$ holds $\mu_{\varphi(\alpha)}$--almost everywhere.
\end{cor}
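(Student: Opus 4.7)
The plan is to extract the corollary directly from Theorem 7.1, using the elementary fact that two equidistributed functions with one pointwise dominating the other must coincide almost everywhere.

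First I would apply Theorem 7.1 at the interior point $\alpha \in (a,b)$. Since $\varphi(\alpha)\in\cH^{1\bar1}\subset\cE^\infty$ is bounded, in particular finite everywhere, and since for each $x\in X$ the convex function $\varphi(\cdot)(x):[a,b]\to[-\infty,\infty)$ is finite at the interior point $\alpha$, it is finite throughout $(a,b)$ (a convex function on an interval which is finite at an interior point cannot be $-\infty$ on either side, because any such $-\infty$ value could be convexly combined with an endpoint value to contradict finiteness at $\alpha$). Hence both $\partial^-\varphi(\alpha)(x)$ and $\partial^+\varphi(\alpha)(x)$ are genuine real one-sided derivatives, and Theorem 7.1 yields
\[
(\partial^-\varphi(\alpha))^\star = \rho_\varphi = (\partial^+\varphi(\alpha))^\star
\]
as functions on $(0,V)$. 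In other words, $\partial^-\varphi(\alpha)$ and $\partial^+\varphi(\alpha)$ are equidistributed on the measure space $(X,\mu_{\varphi(\alpha)})$.

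Next, convexity of $\varphi(\cdot)(x)$ on $(a,b)$ gives $\partial^-\varphi(\alpha)(x)\le\partial^+\varphi(\alpha)(x)$ for every $x\in X$. Combining pointwise domination with equidistribution, for every $\tau\in\bR$
\[
\{\partial^-\varphi(\alpha)>\tau\}\subseteq\{\partial^+\varphi(\alpha)>\tau\},
\]
and the two sets have equal $\mu_{\varphi(\alpha)}$-measure, so
\[
\mu_{\varphi(\alpha)}\bigl(\partial^+\varphi(\alpha)>\tau\ge\partial^-\varphi(\alpha)\bigr)=0.
\]
Taking the union over rational $\tau$, $\mu_{\varphi(\alpha)}(\partial^-\varphi(\alpha)<\partial^+\varphi(\alpha))=0$, which is the assertion.

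There is no serious obstacle here: the only subtle point is the finiteness of the one-sided derivatives at all points of $X$, which is handled by the elementary convexity argument above. Everything else is a direct application of Theorem 7.1 and a standard rearrangement identity.
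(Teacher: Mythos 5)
Your proof is correct and follows exactly the route the paper intends: the paper declares the corollary ``immediate'' from Theorem 7.1, the point being precisely that $(\partial^-\varphi(\alpha))^\star=\rho_\varphi=(\partial^+\varphi(\alpha))^\star$ makes the two one-sided derivatives equidistributed, while convexity gives $\partial^-\varphi(\alpha)\le\partial^+\varphi(\alpha)$ pointwise, forcing equality $\mu_{\varphi(\alpha)}$--a.e.\ (the same device already appears in the proof of Lemma 3.3). Your added remarks on finiteness of the one-sided derivatives are a harmless elaboration of details the paper leaves implicit.
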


A similar result follows from a recent paper by Di Nezza and Lu. A special case of \cite[Corollary 3.3]{DNL}
together with Corollary 6.3 above
imply that if $\varphi:[a,b]\to\cE^\infty$ is a geodesic and $\mu_{\varphi(\alpha)}$ has finite entropy for some
$\alpha\in[a,b]$, then $\rho_\varphi=\big(\partial^\pm\varphi(\alpha)\big)^\star$; if $\alpha\in(a,b)$, then
$\partial^-\varphi(\alpha)=\partial^+\varphi(\alpha)$ holds $\mu_{\varphi(\alpha)}$--a.e. 

\section{Lagrangians}			

The rest of the paper is concerned with Lagrangians on bundles larger than $T^\infty\cE$ and the relationship between rise and the induced action. This section will be about the structure and general properties of Lagrangians.

Our starting point is an invariant convex Lagrangian $L:T^\infty\cE\to\bR$ in the sense of Definition 2.4. \cite[Theorem 2.4]{L2} 
describes the structure of such Lagrangians:

\begin{thm}        
Given an invariant convex Lagrangian $L:T^\infty\cE\to\bR$, there is a family of $\cA_u\subset\bR\times B(X)$, $u\in\cH$, such that
\begin{equation}         
L(\xi)=\sup_{(a,f)\in\cA_u} a +\int_X\xi f \,d\mu_u, \quad \xi\in T_u\cH\approx C^\infty(X).
\end{equation}
The $\cA_u$ can be chosen invariant in the sense of the following definition.
\end{thm}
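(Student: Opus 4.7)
The plan is to obtain the representation fiberwise via Hahn--Banach together with the rearrangement structure of $L$. Fix $u\in\cH$; the restriction $L|_{T_u^\infty\cE}$ is a convex function on $B(X)$ that is continuous for the sup-norm topology. By Hahn--Banach (equivalently, the Fenchel--Moreau biconjugate theorem), for every $\xi_0\in B(X)$ there exists a continuous affine minorant of $L$ achieving $L(\xi_0)$ at $\xi_0$, so $L$ equals the pointwise supremum of all its continuous affine minorants. Restricted to $\xi\in T_u\cH\approx C^\infty(X)$, this yields a representation of the form $L(\xi)=\sup_{(a,\Lambda)}a+\Lambda(\xi)$, where $\Lambda$ ranges over continuous linear functionals on $(B(X),\|\cdot\|_\infty)$, i.e.\ bounded finitely additive signed measures on $X$.

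The essential refinement is to replace such a general $\Lambda$ by integration $\xi\mapsto\int_X\xi f\,d\mu_u$ against a bounded Borel density $f\in B(X)$. Here I would exploit strict rearrangement invariance together with Lemma 2.5. For the elementary Lagrangian $L_g(\xi)=\int_0^V\xi^{\star u}g$ associated to a decreasing integrable $g:(0,V)\to\bR$, the proof of Lemma 2.5 already furnishes supporting functionals of the desired concrete form, via the Hardy--Littlewood identity
\[
L_g(\xi)=\sup\Bigl\{\int_X\gamma\xi\,d\mu_u:\gamma^{\star u}=g\Bigr\}.
\]
A general $L$ is then handled by passing through its rearrangement avatar $L_\star:B(0,V)\to\bR$, which is convex. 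Given $\xi_0\in B(X)$, one selects an affine minorant of $L_\star$ at $\xi_0^{\star u}$, identifies it with an elementary Lagrangian $L_g+c$ satisfying $L_g+c\le L$ with equality (of values) at $\xi_0^{\star u}$, and then produces a supporting functional of $L_g$ at $\xi_0$ from Lemma 2.5. This supplies the required pair $(a,f)\in\bR\times B(X)$ with equality at $\xi_0$.

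For the invariance clause, I would take $\cA_u$ to be the maximal such family, namely all $(a,f)\in\bR\times B(X)$ satisfying $a+\int_X\xi f\,d\mu_u\le L(\xi)$ for every $\xi\in B(X)$. Strict rearrangement invariance of $L$ then automatically guarantees that the families $\cA_u$ and $\cA_v$ for different base points correspond under rearrangement, which is presumably what the forthcoming definition of invariance requires. The principal obstacle throughout is the refinement step: producing a countably additive, $\mu_u$-absolutely continuous supporting functional with bounded density, rather than a general finitely additive one. It is exactly the rearrangement structure that bypasses this, by reducing the abstract Hahn--Banach separation to the explicit supporting pairings available for the elementary Lagrangians $L_g$.
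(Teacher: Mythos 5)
The paper does not actually prove this statement: it is quoted verbatim from \cite[Theorem 2.4]{L2}, so there is no internal argument to compare against. Judged on its own terms, your proposal has a genuine gap, and it sits exactly at the point you yourself label ``the principal obstacle.''

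Hahn--Banach applied to the convex, sup-norm continuous function $L|_{T^\infty_u\cE}$ on $B(X)$ produces supporting affine functionals whose linear parts lie in the dual of $(B(X),\|\cdot\|_\infty)$, i.e.\ are bounded \emph{finitely additive} set functions; the entire content of the theorem is to replace these by densities $f\,d\mu_u$ with $f\in B(X)$. Your proposed mechanism is to pass to the avatar $L_\star$ on $B(0,V)$, ``select an affine minorant of $L_\star$ at $\xi_0^{\star u}$,'' and ``identify it with an elementary Lagrangian $L_g+c$.'' But an affine minorant of $L_\star$ that is exact (or nearly exact) at a prescribed point is again only known to be given by a finitely additive measure on $(0,V)$; identifying its linear part with $\zeta\mapsto\int_0^V\zeta g$ for an \emph{integrable} $g$ is precisely the same representation problem, transported from $X$ to $(0,V)$, where the dual of $B(0,V)$ is no better behaved. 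No argument is offered for this conversion --- rearrangement invariance is asserted to ``bypass'' the obstacle, but the step where it would have to act is exactly the step that is skipped. The downstream portions of the plan are sound: once a decreasing integrable $g$ with $c+\int_0^V\zeta g\le L_\star(\zeta)$ and near-equality at $\xi_0^{\star u}$ is in hand, Lemma 2.5 and the Hardy--Littlewood inequality do yield concrete supporting pairs $(c,\gamma)$ with $\gamma^{\star u}=g$, and the maximal family $\cA_u$ is invariant as you say. The gap is solely, but fatally, in producing $g$. A telltale sign that the proposal is not engaging with where the work must happen is that it never uses the hypotheses $u\in\cH$ and $\xi\in T_u\cH\approx C^\infty(X)$, even though the representation (8.1) is claimed only for smooth test directions over smooth potentials; restricting to continuous $\xi$ is what lets one trade the pathological dual of $B(X)$ for countably additive Radon measures via Riesz representation, after which absolute continuity with respect to $\mu_u$ and boundedness of the density still have to be extracted from invariance. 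Some quantitative input of this kind is indispensable and is absent from the proposal.
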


\begin{defn}      
Let $\cU\subset\cE$. A family of $\emptyset\ne\cA_u\subset\bR\times B(X)$, $u\in \cU$, is (strict rearrangement) 
invariant if whenever $u,v\in\cU$, $f\in B(X, \mu_u)$ and $g\in B(X, \mu_v)$ are equidistributed, and 
$(a,f)\in \cA_u$, then $(a,g)\in\cA_v$.  
\end{defn}

Since there are measure preserving bijections $(X, \mu_u)\to (X,\mu_v)$ for any $u,v\in\cE$ \cite[p.409, Theorem 16]{R}, an invariant family 
$\cA_u, u\in\cU$, is uniquely determined by any of its members $\cA_u$.

We will investigate functionals given by formulas like (8.1). Most of the material here will be needed in sections 9 and 10.

\begin{thm}        
Let $\emptyset\ne \cA_u\subset\bR\times B(X)$, $u\in\cE$, be an invariant family and 
\begin{equation}     
L(\xi)=\sup_{(a,f)\in\cA_u} a+\int_X\xi f \,d\mu_u\in(-\infty, \infty], \qquad \xi\in T^\infty_u\cE.
\end{equation}

(a)\ $L$ is finite on $T\cH$ if and only if
\begin{equation}     
\sup_{(a,f)\in\cA_u} a+\lambda \int_X |f| \,d\mu_u <\infty
\end{equation}
for all $\lambda\in(0,\infty)$ and for all (equivalently: for some) $u\in\cE$. In this case $L$ is finite, fiberwise convex and continuous in the sup norm topology on $T^\infty_u\cE\approx B(X)$.

(b)\ In addition to being finite on $T\cH$, $L$ is strongly continuous (Definition 2.4) on the fibers $T_u\cH$, $u\in\cH$, 
if and only if $\cA_u$ 
is uniformly integrable in the sense that for every $\varepsilon,\lambda>0$ there is  a $\delta>0$ such that whenever $u\in\cH$ and $E\subset X$ has $\mu_u$ measure $\le\delta$, then 
\begin{equation}     
\sup_{(a,f)\in\cA_u} a+\lambda \int_E |f| \,d\mu_u <L(0)+\varepsilon.
\end{equation}
In this case $L$ is strongly continuous on all fibers $T^\infty_u\cE$.
\end{thm}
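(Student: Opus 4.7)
Convexity of $L$ on each fiber is immediate because $L$ is a pointwise supremum of affine functionals $\xi\mapsto a+\int\xi f\,d\mu_u$. The invariance of condition (8.3) across $u$ follows from the strict rearrangement invariance of $\{\cA_u\}$: an equidistribution pairing $(a,f)\in\cA_u\leftrightarrow(a,g)\in\cA_v$ preserves $\int|f|\,d\mu_u=\int|g|\,d\mu_v$. Assuming (8.3), the elementary bound $\int\xi f\,d\mu_u\le\|\xi\|_\infty\int|f|\,d\mu_u$ gives $L(\xi)\le\sup_{(a,f)}(a+\|\xi\|_\infty\int|f|\,d\mu_u)<\infty$, so $L$ is finite on every fiber $T^\infty_u\cE$ and bounded above on every sup-norm ball. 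A convex function bounded above on a neighborhood of each point of its finite domain is automatically continuous there, so $L$ is sup-norm continuous on every fiber.

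\textbf{Main obstacle (necessity in (a)).} Suppose now that $L$ is finite on $T\cH$. Each map $\xi\mapsto a+\int\xi f\,d\mu_u$ is Fr\'echet-continuous on $C^\infty(X)$, so $L|_{T_u\cH}$ is convex, Fr\'echet-lower semicontinuous, and finite-valued; since $C^\infty(X)$ is Fr\'echet (hence Baire) and exhausted by the closed sets $\{L\le N\}$, Baire category produces a $C^k$-neighborhood of some $\xi_0\in C^\infty(X)$ on which $L$ is bounded. Upgrading this local $C^k$-bound to the sup-norm bound (8.3) is the central difficulty. My plan is to use the rearrangement invariance together with Yau's solution of the Calabi conjecture: since any smooth positive volume form of total mass $V$ arises as $\mu_{\tilde u}$ for some $\tilde u\in\cH$, and $L(\xi)$ depends only on the distribution $\xi^{\star u}$, any bounded Borel $\eta$ on $(X,\mu_{u_0})$ has the same distribution as some smooth $\tilde\eta\in C^\infty(X)$ relative to a suitable $\mu_{\tilde u}$, and by adjusting the chosen K\"ahler potential one can place $\tilde\eta$ in the Baire neighborhood. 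This invariance-plus-Baire transfer, converting local $C^k$-boundedness into a uniform sup-norm bound, is the step I expect to be most delicate.

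\textbf{Plan for part (b).} For sufficiency, let $\xi_j\in T^\infty_u\cE$ with $\|\xi_j\|_\infty\le B$ and $\xi_j\to\xi$ $\mu_u$-a.e. Dominated convergence on each affine functional yields $\liminf L(\xi_j)\ge L(\xi)$. For the matching upper bound, choose near-optimizers $(a_j,f_j)\in\cA_u$ with $a_j+\int\xi_jf_j\ge L(\xi_j)-1/j$; combining $\int|f_j|\,d\mu_u\le(M(2B)-a_j)/(2B)$ from (8.3) with $L(\xi_j)\ge L(\xi)-o(1)$ and simple algebra gives a uniform lower bound $a_j\ge -C$. For prescribed $\eta>0$, uniform integrability with $\lambda$ chosen large enough then makes $\int_F|f_j|\,d\mu_u\le\eta$ whenever $\mu_u(F)\le\delta(\lambda)$; Egoroff produces a set $E$ with $\mu_u(X\setminus E)\le\delta$ on which $\xi_j\to\xi$ uniformly, and splitting $\int(\xi_j-\xi)f_j\,d\mu_u=\int_E+\int_{X\setminus E}$ and estimating each piece gives $\int(\xi_j-\xi)f_j\,d\mu_u\to 0$, whence $L(\xi_j)\le L(\xi)+o(1)$. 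For necessity, argue contrapositively: if uniform integrability fails one finds $(a_j,f_j)\in\cA_u$ and Borel $E_j$ with $\mu_u(E_j)\to 0$ and $a_j+\lambda\int_{E_j}|f_j|\,d\mu_u\ge L(0)+\epsilon$. Using density of $C^\infty(X)$ in $L^1(\nu_j)$ for the composite measure $d\nu_j=(1+|f_j|)\,d\mu_u$, produce $\tilde\xi_j\in C^\infty(X)$ with $\|\tilde\xi_j\|_\infty\le\lambda+o(1)$ and $\int|\tilde\xi_j-\lambda\,\mathrm{sgn}(f_j)\mathbf{1}_{E_j}|\,d\nu_j\le 1/j^2$; this single approximation both forces $\tilde\xi_j\to 0$ in $L^1(\mu_u)$ (hence $\mu_u$-a.e.\ along a subsequence) and guarantees $\int\tilde\xi_jf_j\,d\mu_u\ge\lambda\int_{E_j}|f_j|-o(1)$. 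Consequently $L(\tilde\xi_j)\ge L(0)+\epsilon-o(1)\not\to L(0)$, contradicting strong continuity on $T_u\cH$.
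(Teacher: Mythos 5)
The necessity direction of part (a) is where your proposal breaks down, and you have correctly identified the spot yourself. The Baire argument does produce a $C^k$-ball $B\subset C^\infty(X)$ around some $\xi_0$ on which $L$ is bounded, but this cannot be upgraded to (8.3) by the route you sketch. Every $\xi\in B$ has distribution (with respect to $\mu_{u_0}$) uniformly close to that of $\xi_0$, so strict rearrangement invariance only extends the bound to functions whose distributions lie in a small neighborhood of a single distribution; it gives no control over test functions equidistributed with $\lambda\,\mathrm{sgn}(f)1_E$, which is what bounding $a+\lambda\int_X|f|\,d\mu$ requires. The Calabi--Yau step does not repair this: realizing an arbitrary bounded Borel $\eta$ as a smooth function equidistributed with respect to some other $\mu_{\tilde u}$ changes the fiber, while your Baire neighborhood lives in one fiber and, more importantly, still constrains the distribution of anything placed inside it. The paper's mechanism is different and is the essential idea you are missing: for a \emph{fixed} $(a,f)\in\cA_0$, invariance of the family gives $(a,g)\in\cA_0$ for \emph{every} $g$ equidistributed with $f$, hence $L(3\xi)\ge a+3\sup_{g\sim f}\int_X\xi g\,d\mu$ for a single well-chosen smooth bump $\xi\ge0$ with $\int_X\xi\,d\mu=\lambda$ and $\mu(\xi>0)\le 1/2$; Lemma 8.4 then bounds this supremum below by $a+3\lambda\int_Xf_+\,d\mu$ (when $\mu(f\ge0)\ge1/2$), and combining with $L(\pm3\lambda)$ at the constant functions yields (8.3) directly from finiteness of $L$ at four smooth points. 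No Baire category is needed. Without Lemma 8.4 (or an equivalent use of the Hardy--Littlewood inequality $\sup_{g\sim f}\int\xi g=\int_0^V\xi^\star f^\star$ applied to the whole equidistribution class of $f$), I do not see how your plan closes.

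Part (b) of your proposal is sound and takes a genuinely different, more elementary route than the paper. For sufficiency you use near-optimizers, a uniform lower bound on the $a_j$, and an Egorov splitting, where the paper instead derives the bound $2L(\eta)\le C(2\sup|\eta|,\mu(|\eta|>\lambda))+C(2\lambda,1)$ and invokes the convexity Lemma 8.5; for necessity your Lusin-type approximation of $\lambda\,\mathrm{sgn}(f_j)1_{E_j}$ in $L^1\bigl((1+|f_j|)\,d\mu_u\bigr)$ replaces the paper's passage to $B(0,1)$ via a continuous measure-preserving map and tent functions. Both of your arguments in (b) check out (note only that strong continuity as defined asserts convergence of $L(\xi_j)$, so you should interleave your sequence with zeros to pin the limit at $L(0)$, and that the transfer of (8.4) between fibers uses the measure isomorphisms $(X,\mu_u)\to(X,\mu_v)$). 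The defect is confined to, but is fatal for, the necessity half of (a).
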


Clearly, for any invariant family $\cA_u$, the Lagragian $L$ defined by (8.2) itself will be invariant under strict rearrangements.---We 
will need a result from \cite{L1}. Recall that given equidistributed  Borel functions $f$ on 
$(X,\mu_u)$ and $g$ on $(X,\mu_v)$, we write $(f,\mu_u)\sim (g,\mu_v)$, or just $f\sim g$ if the measures are understood. 
Another way to express equidistribution is $f^{\star u}=g^{\star v}$. If $\mu_u(Y)>0$ we write $\fint_Y\xi \,d\mu_u$ for the average  
$\int_Y\xi \,d\mu_u / \mu_u(Y)$. If $\mu_u(Y)=0$, we just set $\fint_Y\xi \,d\mu_u=0$.
Further, we put $\xi_+=\max(0,\xi)$, $\xi_-=\max(0,-\xi)$. The measure $\mu$ below is $\mu_0$; but it can be any $\mu_u$ 
because, as
said, all measures $\mu_u$ are isomorphic \cite[p.409, Theorem 16]{R}.

\begin{lem}[L1, Lemma 4.4, simplified]       
Let $f\in L^1(X,\mu)$, $\xi\in B(X)$, and $S, T\subset X$ of equal measure. If $\xi\ge 0$ on $T$ and $\xi\le 0$ on $X\setminus T$, then
\[
\sup_{g\sim f}\int_X\xi g\ge \fint_S f \,d\mu \int_X\xi_+ \,d\mu-\fint_{X\setminus S} f \,d\mu\int_X\xi_-\,d\mu.
\]
\end{lem}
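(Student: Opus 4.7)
Write $m=\mu(S)=\mu(T)$, $m'=\mu(X\setminus S)=\mu(X\setminus T)$, $a=\fint_S f\,d\mu$, $b=\fint_{X\setminus S} f\,d\mu$, and set $\eta=-\xi$ on $X\setminus T$, where $\eta\ge 0$. Since $\xi_+$ is supported in $T$ and $\xi_-$ in $X\setminus T$, the claimed right hand side is
\begin{equation*}
a\int_T\xi\,d\mu+b\int_{X\setminus T}\xi\,d\mu,
\end{equation*}
so it suffices to exhibit a single $g\sim f$ with $\int_T\xi g\,d\mu\ge a\int_T\xi\,d\mu$ and $\int_{X\setminus T}\xi g\,d\mu\ge b\int_{X\setminus T}\xi\,d\mu$. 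The plan is to rearrange $f|_S$ onto $T$ comonotonically with $\xi|_T$, and $f|_{X\setminus S}$ onto $X\setminus T$ anti-comonotonically with $\eta$, and then apply Chebyshev's integral inequality on each piece.

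Because $\mu=\mu_0=\omega^n$ is atomless (the general case reduces to this by the measure isomorphism mentioned in the excerpt), \cite[Proposition 7.4]{BS} furnishes measure preserving bijections of $T$, $S$, $X\setminus T$, $X\setminus S$ onto intervals of length $m,m,m',m'$ under which $\xi|_T$, $f|_S$, $\eta$, $f|_{X\setminus S}$ become their respective decreasing rearrangements. Composing these bijections pairwise yields measure preserving $\alpha:S\to T$ and $\beta:X\setminus S\to X\setminus T$ such that $f\circ\alpha^{-1}$ is comonotone with $\xi|_T$ on $T$ and $f\circ\beta^{-1}$ is anti-comonotone with $\eta$ on $X\setminus T$. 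Setting $g=f\circ\alpha^{-1}$ on $T$ and $g=f\circ\beta^{-1}$ on $X\setminus T$, equidistribution on each of the two pieces of the partition $\{T,X\setminus T\}$ adds to give $g\sim f$ globally.

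By construction
\begin{align*}
\int_T\xi g\,d\mu &=\int_0^m (\xi|_T)^\star(f|_S)^\star\,ds,\\
\int_{X\setminus T}\eta g\,d\mu &=\int_0^{m'}\eta^\star(s)(f|_{X\setminus S})^\star(m'-s)\,ds.
\end{align*}
On $(0,m)$ both factors in the first integral are nonincreasing, so Chebyshev's integral inequality gives $\int_T\xi g\,d\mu\ge (1/m)\int_T\xi\,d\mu\cdot\int_S f\,d\mu=a\int_T\xi\,d\mu$. On $(0,m')$ one factor is nonincreasing and the other nondecreasing, so the reverse Chebyshev inequality gives $\int_{X\setminus T}\eta g\,d\mu\le b\int_{X\setminus T}\eta\,d\mu$, equivalently $\int_{X\setminus T}\xi g\,d\mu\ge b\int_{X\setminus T}\xi\,d\mu$. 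Adding the two bounds yields the lemma. The only delicate step is the construction of $\alpha,\beta$ with the prescribed monotone pairings; once atomlessness of $\mu$ is invoked this is routine, and the rest is just Hardy--Littlewood combined with Chebyshev.
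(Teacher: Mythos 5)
Your argument is correct. Note that the paper itself does not prove this lemma --- it is imported verbatim from [L1, Lemma 4.4] --- so what you have supplied is a self-contained substitute rather than a variant of an argument in the text. Your route (reduce the right-hand side to $a\int_T\xi\,d\mu+b\int_{X\setminus T}\xi\,d\mu$ with $a=\fint_S f\,d\mu$, $b=\fint_{X\setminus S}f\,d\mu$, then exhibit one $g\sim f$ by transporting $f|_S$ onto $T$ comonotonically with $\xi|_T$ and $f|_{X\setminus S}$ onto $X\setminus T$ anti-comonotonically with $-\xi$, and finish with Chebyshev's correlation inequality in each direction) is sound: the two piecewise transports do glue to a global rearrangement of $f$ because $\mu(S)=\mu(T)$ forces $\mu(X\setminus S)=\mu(X\setminus T)$, and the signs in the two Chebyshev applications come out right since $\xi\ge0$ on $T$ and $\le0$ off $T$. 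The one step you rightly flag as delicate --- the existence of measure-preserving maps that sort $\xi|_T$, $f|_S$, etc.\ into their decreasing rearrangements --- is exactly the Ryff-type statement the paper already invokes via \cite[Proposition 7.4, p.~81]{BS} in the proof of Lemma 2.5, so it is available with the paper's own toolkit; alternatively one can bypass the explicit sorting maps by quoting the Hardy--Littlewood supremum identity \cite[Theorem 2.6, p.~49]{BS} separately on $T$ and on $X\setminus T$ and then applying Chebyshev to the resulting integrals of products of rearrangements. Either way the proof is complete.
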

A further result we will use concerns convergence properties of convex functions on general vector spaces.

\begin{lem}   
Let $\Xi$ be a real vector space and $l:\Xi\to(-\infty,\infty]$ convex. If $\xi,\xi_j\in \Xi$, $j\in \bN$, satisfy
\[
\limsup_{\alpha\to\pm\infty}\limsup_{j\to\infty} l\big((1-\alpha)\xi+\alpha\xi_j\big)<\infty,
\]
then $l(\xi)<\infty$ and $\lim_{j\to\infty}l(\xi_j)=l(\xi)$.
\end{lem}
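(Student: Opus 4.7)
The plan is to exploit the convexity of $l$ along the affine line through $\xi$ and $\xi_j$ to bootstrap from the hypothesis to both finiteness of $l(\xi)$ and the limit statement. For fixed $j$, let $h_j(\alpha)=l\bigl((1-\alpha)\xi+\alpha\xi_j\bigr)$, which is a convex function of $\alpha\in\bR$ satisfying $h_j(0)=l(\xi)$ and $h_j(1)=l(\xi_j)$. From the hypothesis, one can choose $\alpha_0>1$ large and $\beta_0>0$ large such that $\limsup_{j\to\infty}h_j(\alpha_0)$ and $\limsup_{j\to\infty}h_j(-\beta_0)$ are both finite (in fact, as $\alpha_0\to\infty$ and $\beta_0\to\infty$, these limsups stay bounded by some common constant coming from the hypothesis).

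First I would prove $l(\xi)<\infty$. Since $0\in[-\beta_0,\alpha_0]$ and $h_j$ is convex, $h_j(0)\le\max\bigl(h_j(-\beta_0),h_j(\alpha_0)\bigr)$; passing to $\limsup_j$ gives $l(\xi)<\infty$. The same argument applied to $h_j(1)$ shows $\limsup_j l(\xi_j)<\infty$ as well.

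Next I would obtain the sharp bounds. For $\alpha_0>1$, the point $\xi_j$ is a convex combination of $\xi$ and $z_{\alpha_0,j}:=(1-\alpha_0)\xi+\alpha_0\xi_j$, namely $\xi_j=(1-1/\alpha_0)\xi+(1/\alpha_0)z_{\alpha_0,j}$. Convexity gives $l(\xi_j)\le(1-1/\alpha_0)l(\xi)+(1/\alpha_0)l(z_{\alpha_0,j})$, and taking $\limsup_j$ followed by $\alpha_0\to\infty$ (using the hypothesis) yields $\limsup_j l(\xi_j)\le l(\xi)$. Symmetrically, for $\beta_0>0$ the point $\xi$ lies between $\xi_j$ and $z_{-\beta_0,j}$, with $\xi=\tfrac{1}{1+\beta_0}z_{-\beta_0,j}+\tfrac{\beta_0}{1+\beta_0}\xi_j$; convexity then gives
\[
l(\xi_j)\ge\bigl(1+\tfrac{1}{\beta_0}\bigr)l(\xi)-\tfrac{1}{\beta_0}\,l(z_{-\beta_0,j}),
\]
and $\liminf_j$ followed by $\beta_0\to\infty$ produces $\liminf_j l(\xi_j)\ge l(\xi)$. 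Combining the two gives $\lim_j l(\xi_j)=l(\xi)$.

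The only subtlety—which I expect to be the main thing to handle with care—is the correct order of the double limit: the hypothesis is $\limsup_{\alpha\to\pm\infty}\limsup_{j\to\infty}$, so one must fix $\alpha_0$ (resp.\ $\beta_0$) first, take $\limsup_j$ of the convexity inequality, and only then let $\alpha_0\to\infty$ (resp.\ $\beta_0\to\infty$). Because the hypothesis guarantees the inner $\limsup_j$ stays bounded for all sufficiently large $|\alpha|$, the terms $(1/\alpha_0)\limsup_j l(z_{\alpha_0,j})$ and $(1/\beta_0)\limsup_j l(z_{-\beta_0,j})$ tend to $0$, which is exactly what is needed. No deeper input is required.
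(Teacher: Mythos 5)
Your proof is correct and follows essentially the same route as the paper: both arguments study the convex function $\alpha\mapsto l\bigl((1-\alpha)\xi+\alpha\xi_j\bigr)$, deduce $l(\xi)<\infty$ from its finiteness at two far-out points, and then squeeze $l(\xi_j)-l(\xi)$ between quantities of size $O(1/|\alpha|)$ using the boundedness of the inner $\limsup_j$ for large $|\alpha|$. Your convex-combination formulation is just a rearrangement of the paper's difference-quotient inequalities, and you handle the order of limits correctly.
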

\begin{proof} Write $\xi_j(\alpha)=(1-\alpha)\xi+\alpha\xi_j$, $\alpha\in\bR$, and note that $\xi_j(0)=\xi$, $\xi_j(1)=\xi_j$. 
Fix a positive number $C>\limsup_{\alpha\to\pm\infty}\limsup_{j\to\infty}l\big(\xi_j(\alpha)\big)$. Choose 
$\alpha_0>1$ and $j_\beta\in\bN$ for $\beta>\alpha_0$ such that $l\big(\xi_j(\alpha)\big)<C$ when $|\alpha|>\alpha_0$,
$j>j_{|\alpha|}$. Since  $l\big(\xi_j(\alpha)\big)$ is a convex function of $\alpha$, finite if $j>j_{|\alpha|}$, 
with such $j$ it
follows that $l(\xi)=l\big(\xi_j(0)\big)<\infty$. Convexity also implies with $j>j_{|\alpha|}$
\[
 l(\xi_j)-l(\xi)\,\begin{cases}\begin{aligned}
&\le\dfrac{l\big(\xi_j(\alpha)\big)-l(\xi)}\alpha<
 \dfrac{C-l(\xi)}\alpha\quad\text{if }\alpha>\alpha_0\\
&\ge \dfrac{l\big(\xi_j(\alpha)\big)-l(\xi)}\alpha>\dfrac{C-l(\xi)}\alpha\quad\text{if }\alpha<-\alpha_0.\end{aligned}\end{cases}
\]
Therefore $\limsup_{j\to\infty}|l(\xi_j)-l(\xi)|\le|C-l(\xi)|/|\alpha|$, which completes the proof.
\end{proof}

\begin{proof}[Proof of Theorem 8.3] It suffices to work with $u=0$, since all measure spaces $(X,\mu_u)$ are isomorphic. 
We continue writing $\mu$ for $\mu_0$. For simplicity we assume $\mu(X)=1$. We can also assume $L(0)=0$, since this can be 
arranged if we subtract $L(0)$ from all $a$ that occur in $\cA_u$. 

(a)\ If (8.3) holds, then with $\xi\in T^\infty_0\cH$ and $\lambda>\sup |\xi|$, 
\begin{equation}      
L(\xi)\le\sup_{(a,f)\in \cA_0} a+\lambda\int_X |f|\,d\mu_u <\infty
\end{equation}
shows $L$ is locally bounded above on $T^\infty_0\cH\approx B(X)$. As the supremum of affine functions, $L$ is convex; convexity and local upper boundedness imply continuity, see e.g. \cite[Lemma 4.2]{L1}.

Conversely, suppose $L$ is finite on $T\cH$. Given $\lambda\in(0,\infty)$, choose a nonnegative 
$\xi\in T_0\cH\approx C^\infty	(X)$ such that $\int_X\xi \,d\mu=\lambda$, but such that $T'=(\xi>0)$ has measure $\le 1/2$. Let $(a,f)\in \cA_0$, and assume first that $\cS=(f\ge 0)$ has measure $\ge 1/2$. Choose $T\supset T'$ with $\mu(S)=\mu(T)$. By (8.2) and by Lemma 8.4
\begin{equation}         
L(3\xi)\ge a+\sup_{g\sim f} 3\int_X \xi g\, d\mu\ge 
a+3\fint_S f \,d\mu\int_X\xi\, d\mu\ge a+3\lambda	\int_Xf_+\,d\mu.
\end{equation}

Viewing the constant functions $\pm 3\lambda$ as elements of $T^\infty_0\xi$, 
\begin{align*}
&L(3\lambda)\ge a+3\lambda\int_X f \,d\mu=a+3\lambda\int_X(f_+-f_-) \,d\mu,\\
&L(-3\lambda)\ge a-3\lambda\int_X f \,d\mu=a+3\lambda\int_X(f_--f_+) \,d\mu.
\end{align*}
Therefore
\begin{equation}        
a+\lambda\int_X|f|\,d\mu=a+\lambda\int_X(f_++f_-)\,d\mu \le\frac{2L(3\xi)+L(-3\lambda)}{3}.
\end{equation}
If, instead of $(f\ge 0)$, the set $(f\le 0)$ has measure $\ge 1/2$, we apply (8.6) with $\xi$ replaced by $-\xi$, to obtain 
\begin{gather}
L(-3\xi)\ge a+\sup_{g\sim -f} 3\int_X \xi g \,d\mu\ge a +3\lambda\int_X(-f)_+ \,d\mu=
a+3\lambda\int_X f_-\,d\mu,
\quad\text{and}\nonumber \\
a+\lambda\int_X|f|\,d\mu=a+\lambda\int_X(f_++f_-)\,d\mu \le\frac{2L(-3\xi)+L(3\lambda)}{3}.
\end{gather}
To sum up, for every $(a,f)\in \cA_0$ either (8.7) or (8.8) holds, and so
\begin{equation}        
\sup_{(a,f)\in\cA_0} a+\lambda\int_X|f|\,d\mu\le
\max\Big(\frac{2L(3\xi)+L(-3\lambda)}{3}, \frac{2L(-3\xi)+L(3\lambda)}{3}\Big) <\infty.
\end{equation}

(b)\ Suppose $\cA_0$ is uniformly integrable in the sense of (8.4). Strong continuity of $L|T^\infty_0\cE$ will follow from 
Lemma 8.5.  We start by deriving a better bound on $L$ than (8.5).

If $\lambda,\delta\in (0, \infty)$, let
\begin{equation}        
C(\lambda,\delta)=\sup\Big\{a+\lambda\int_E|f| \,d\mu : (a,f)\in \cA_0, \mu(E)\le\delta\Big\}.
\end{equation}
Our assumption is $\lim_{\delta\to 0} C(\lambda,\delta)=0$. Let $\eta\in T^\infty_0\cE$,  $\lambda>0$. 
If $(a,f)\in \cA_0$
\begin{align}
2\Big(a+\int_X \eta f \,d\mu\Big)&
\le a+2\sup |\eta|\int_{|\eta|>\lambda} |f| \,d\mu +a+2\lambda\int_{|\eta\le\lambda} |f|\,d\mu\nonumber \\
&\le C\big(2\sup |\eta|, \mu({|\eta|>\lambda})\big)+ C(2\lambda, 1), \qquad\text{ whence}\nonumber\\
2L(\eta)&\le C\big(2\sup |\eta|, \mu(|\eta|>\lambda)\big)+C(2\lambda, 1).
\end{align}
This can be an improvement on (8.5), which says $L(\eta)\le C(\sup |\eta|, 1)$.

Now consider uniformly bounded $\xi_j\in T^\infty_0\cE$,  $j\in \bN$, that $\mu$--a.e. tend to $\xi\in T^\infty_0\cE$. 
To show $L(\xi_j)\to L(\xi)$, with $\alpha\in\bR$ let
\[
\xi_j(\alpha)=(1-\alpha)\xi+\alpha\xi_j; \qquad \lim_{j\to\infty}\xi_j(\alpha)=\xi\quad \mu\text{--a.e.}
\]
Let $\lambda= 1+\sup_j\sup_X |\xi_j|$ and note that $\sup_X |\xi_j(\alpha)|< (2|\alpha|+1)\lambda$. 
Given $\alpha$, choose $\delta>0$ so that $C\big((4|\alpha|+2)\lambda, \delta\big)<1$.
By Egorov's theorem $\lim_{j\to\infty}\xi_j(\alpha)=\xi$ uniformly outside a set of measure $\le\delta$. Choose $j_0$ such that
\[
\mu(|\xi_j(\alpha)|>\lambda)\le\delta \quad\text{for}\quad j>j_0.
\]
For such $j$, by (8.11)
\[
2L(\xi_j(\alpha))\le C\big((4|\alpha|+2)\lambda,\delta\big)+C(2\lambda, 1)< 1+C(2\lambda, 1).
\]
Lemma 8.5 applies, and gives $L(\xi_j)\to L(\xi)$. We conclude $L$ is strongly continuous on $T_0^\infty\cE$.

To prove the converse implication we assume $L$ is strongly continuous on $T_0\cH$. It will be convenient to pass to decreasing rearrangements of $f,\xi\in B(X,\mu)$; then (8.2) can be rewritten
\[
L(\xi)=\sup_{(a,f)\in \cA_0} a+\int^1_0\xi^\star f^\star,\quad \xi\in T_0^\infty\cE,
\]
the integral with respect to Lebesgue measure. Indeed, if $\theta:(X,\mu)\to \big((0,1)$,Lebesgue\big) preserves measure, then e.g. 
\cite[Lemma 7.2]{L1} implies
\[
\sup_{g\sim f}\int_X \xi f\,d\mu=\int_X (\xi^\star\circ\theta)(f^\star\circ\theta)\,d\mu=\int^1_0\xi^\star f^\star.
\]
Accordingly consider $M: B(0,1)\to\bR$ given by
\[
M(\zeta)=\sup_{(a,f)\in\cA_0} a+\int^1_0\zeta f^\star.
\]
Thus $M(\zeta)=L(\zeta\circ\theta)$, and part (a) implies that $M$ is continuous if $B(0,1)$ is endowed with the supremum norm. We show that it is even strongly continuous on $C[0,1]$: if uniformly bounded $\zeta_j\in C[0,1]$ converge almost everywhere, then $M(\zeta_j)$ also converges.

For this purpose we need a continuous measure preserving $\theta:(X,\mu)\to [0,1]$. E.g. by \cite[Lemma 5.5]{L1} such 
$\theta$ exists. Thus $\zeta_j\circ\theta\in C(X)$. Choose $\xi_j\in C^\infty(X)\approx T_0\cH$ such that 
\[
\sup_X|\xi_j-\zeta_j\circ\theta|<1/j, \qquad |L(\xi_j)-L(\zeta_j\circ\theta)|<1/j.
\]
Then $\zeta_j\circ\theta$ and $\xi_j$ converge $\mu$--a.e. Hence $L(\xi_j)$ converge and so do $L(\zeta_j\circ \theta)=M(\zeta_j)$. 

We can also rewrite $C(\lambda,\delta)$ of (8.10) as
\begin{equation} 
C(\lambda,\delta)=\sup_{(a,f)\in \cA_0} a+\lambda\int^\delta_0 |f^\star|,
\end{equation}
and we need to show $\lim_{\delta\to 0} C(\lambda,\delta)=0$ for all $\lambda >0$. Part(a) at least implies
\[
0=L(0)=\sup_{(a,f)\in\cA_0} a\le C(\lambda,\delta)\le C(\lambda, 1)<\infty.
\]
For any $(a,f)\in \cA_0$, the negative part $f^\star_-$ of $f^\star$ is increasing. Therefore by (8.12)
\begin{gather}       
C(\lambda, 1)\ge  a+\lambda\int^1_{1/2} f^\star_-\ge
a+\frac{\lambda}{2} f^\star_-\Big(\frac {1}{2}\Big),\qquad \text{whence}\nonumber \\  
\lambda f^\star_-(\tau)\le2\big(C(\lambda,1)-a\big)\quad \text{if}\quad 0<\tau<1/2.  
\end{gather}
For $j=2,3,\dots$ consider the functions $\zeta_j\in C[0,1]$,
\[
\zeta_j=\begin{cases}
\lambda \quad\text{on}\quad [0,1/j]\\
0 \quad \text{on}\quad [2/j, 1] \\
\text{linear in between,}
\end{cases}
\]
uniformly bounded and tending to $0$ a.e. as $j\to\infty$; whence $M(\zeta_j)\to M(0)=0$. If $(a,f)\in\cA_0$ and $j\ge 4$, using 
(8.13) and that $L(0)=0$ implies $ a\le 0$,
\begin{align*}
M(\zeta_j)&\ge a+\int^{1}_0 \zeta_j f^{\star}=a+\int^{2/j}_0 \zeta_j\big( |f^{\star}|-2f^\star_-\big)
\ge a+\int_0^{2/j}\zeta_j|f^\star|-2\lambda\int_0^{2/j}f^\star_-\\
&\ge a+\lambda\int^{1/j}_0  |f^{\star}|-\frac{8}j\big(C(\lambda, 1)-a\big)
\ge 3\Big(a+\frac{\lambda}{3}\int^{1/j}_0|f^\star|\Big)-\frac{8C(\lambda, 1)}{j}.
\end{align*}
In view of (8.12), therefore
\[
0\le 3C(\lambda/3, 1/j)\le M(\zeta_j)+8C(\lambda, 1)/j\to 0, \quad j\to\infty,
\]
which completes the proof of Theorem 8.3.
\end{proof}

Now consider an invariant family of nonempty $\cA_u\subset\bR\times B(X)$, $u\in\cE$. Such a family defines a function 
$L: T^\infty\cE\to (-\infty,\infty]$ by formula (8.2), and in fact on a larger (set theoretical) Banach bundle $T^1\cE\to\cE$. 
The fibers of this latter are
\[
T^1_u\cE=L^1(X,\mu_u), \quad\text{and} \quad T^1\cE=\coprod_{u\in\cE} T^1_u\cE.
\]
Occasionally it will be convenient to view $T^\infty\cE$ as a subbundle of $T^1\cE$. Although in reality it is only a quotient of 
$T^\infty_u\cE$, modulo equality $\mu_u$--a.e., that is a subspace of $T^1_u\cE$, this should not cause confusion, since all the 
Lagrangians we work with here take the same value on functions that agree a.e.---The invariant family $\cA_u$ thus determines $L:T^1\cE\to (-\infty,\infty]$, 
\begin{equation}           
L(\xi)=\sup_{(a,f)\in\cA_u} a+\int_X \xi f \,d\mu_u, \qquad \xi\in T^1_u\cE,
\end{equation}
a fiberwise lower semicontinuous convex function that is strict rearrangement invariant. (The topology on $T_u^1\cE$ is the
$L^1$ topology.) With $\cA_u$ we can associate three more invariant families
\begin{equation}\begin{gathered}          
\cA^+_u=\{(a,f_+): (a,f)\in \cA_u\}, \quad \cA^-_u=\{(a,f_-): (a,f)\in \cA_u\},\\  
\cA^{|\, |}_u=\{(a,g)\in\bR\times B(X): \text{ there is }(a,f)\in\cA_u \text{ such that } |g|=|f|\},
\end{gathered}\end{equation}
and Lagrangians $L^+, L^-, L^{|\, |}: T^1\cE\to (-\infty,\infty]$,
\begin{equation}\begin{gathered}          
L^\pm(\xi)=\sup_{(a,g)\in \cA^\pm_u} a+\int_X\xi g \,d\mu_u=\sup_{(a,f)\in\cA_u} a+\int_X\xi f_\pm \,d\mu_u,\\  
L^{|\, |}(\xi)=\sup_{(a,g)\in \cA^{|\,  |}_u} a+\int_X\xi g \,d\mu_u=\sup_{(a,f)\in\cA_u} a+\int_X|\xi f| \,d\mu_u,
\end{gathered}\end{equation}
$\xi\in T^1_u\cE$. These Lagrangians are comparable:

\begin{thm}          
If $\xi\in T^1_u\cE$ then
\begin{equation}\begin{gathered}          
2L^+(\xi)\le L(2\xi)+L^-\Big(2\fint_X\xi \,d\mu_u\Big), \quad 2L^-(\xi)\le L(-2\xi)+L^+\Big(-2\fint_X\xi \,d\mu_u\Big),\\ 
2L^{|\, |}(\xi)\le\max\big(L(8\xi), L(-8\xi)\big)+L^{|\, |}\Big(6\fint_X |\xi|\,d\mu_u\Big),\\
|L(\xi)-L(0)|\le L^{|\, |}(\xi)-L^{|\, |}(0). 
\end{gathered}\end{equation}
Furthermore, if $L | T^\infty\cE$ is finite and strongly continuous on the fibers $T_u^\infty\cE$, then the same holds for $L^\pm, L^{|\, |}$.
\end{thm}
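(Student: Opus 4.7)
The four inequalities are proved in succession, with (3) being the main technical obstacle; strong continuity then follows from (1)--(3) via Lemma 8.5. The analytic core is the invariance of $\cA_u$: for any $(a,f)\in\cA_u$ and any $h$ equidistributed with $f$ on $(X,\mu_u)$, $(a,h)\in\cA_u$, so by the Hardy--Littlewood rearrangement inequality each supremum $\sup_{(a,f)\in\cA_u}a+\int_X\zeta f\,d\mu_u$ equals $\sup_{[f]}a+\int_0^V\zeta^{\star u}f^{\star u}\,ds$. Applied to $L^+(\xi)=\sup_{[f]}a+\int\xi^\star f_+^\star$ together with $f_+^\star=(f^\star)_+=f^\star+(f^\star)_-$, I split
\[
2(a+\int\xi^\star f_+^\star)=(a+2\int\xi^\star f^\star)+(a+2\int\xi^\star(f^\star)_-)\le L(2\xi)+(a+2\int\xi^\star(f^\star)_-).
\]
On the subinterval $(s_0,V)=\{f^\star\le 0\}$, $\xi^\star$ is decreasing and $(f^\star)_-$ is increasing, so Chebyshev's sum inequality there---combined with the fact that the average of a decreasing function over a lower subinterval is at most its overall average---gives $\int\xi^\star(f^\star)_-\le\fint_X\xi\,d\mu_u\cdot\int_X f_-\,d\mu_u$; together with $L^-(2\fint_X\xi\,d\mu_u)\ge a+2\fint_X\xi\,d\mu_u\cdot\int_X f_-\,d\mu_u$, this closes (1). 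Inequality (2) is (1) applied to the dual Lagrangian $\tilde L(\zeta):=L(-\zeta)$, whose $\tilde L^\pm$ equal $L^\mp$. Inequality (4) follows from $L(\xi)\le L^{|\,|}(\xi)$ (since $\int\xi f\le\int|\xi f|$) together with $2L(0)\le L(\xi)+L(-\xi)$ by convexity and $L^{|\,|}(-\xi)=L^{|\,|}(\xi)$, which yield both $L(\xi)-L(0)\le L^{|\,|}(\xi)-L^{|\,|}(0)$ and $L(0)-L(\xi)\le L(-\xi)-L(0)\le L^{|\,|}(\xi)-L^{|\,|}(0)$.

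\textbf{Inequality (3).} The initial reduction uses the majorization $|f|^\star\prec f_+^\star+f_-^\star$, which follows from $\int_E|f|\,d\mu_u\le\int_0^s(f_+^\star+f_-^\star)\,d\sigma$ for $\mu_u(E)=s$. Since $|\xi|^\star$ is nonnegative and decreasing, Hardy--Littlewood--P\'olya gives $\int|\xi|^\star|f|^\star\le\int|\xi|^\star(f_+^\star+f_-^\star)$, and the split $\sup(a+A+B)\le\frac12[\sup(a+2A)+\sup(a+2B)]$ then produces $2L^{|\,|}(\xi)\le L^+(2|\xi|)+L^-(2|\xi|)$. Applying (1), (2) to $2|\xi|$ reduces matters to bounding $L(\pm 4|\xi|)$ and the constant-argument terms $L^\pm(\pm 4\fint_X|\xi|\,d\mu_u)$; the latter are absorbed directly into $L^{|\,|}(c)$ at a positive constant $c$, using $L^\pm(c)\le L^{|\,|}(|c|)$ and monotonicity of $c\mapsto L^{|\,|}(c)$ on $[0,\infty)$. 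The delicate step is replacing $L(\pm 4|\xi|)$ by $\max\bigl(L(\pm 8\xi)\bigr)$: the identities $2\xi_\pm=|\xi|\pm\xi$ combined with convexity give $L(8\xi_\pm)\le\tfrac12L(\pm 8\xi)+\tfrac12L(8|\xi|)$, and the residual $L(\pm 8|\xi|)$ terms that arise must be balanced against the constant-argument part so that everything residual gets absorbed into $L^{|\,|}(6\fint_X|\xi|\,d\mu_u)$. The specific constants $8$ and $6$ emerge from optimizing this chain of estimates, and managing the recursion so it closes cleanly is where I expect the principal difficulty to lie.

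\textbf{Strong continuity.} For $L^+$ on $T^\infty_u\cE$: given a uniformly bounded sequence $\xi_j\to\xi$ $\mu_u$-a.e., Lemma 8.5 applied to the convex function $L^+$ reduces $L^+(\xi_j)\to L^+(\xi)$ to the verification that $\limsup_{\alpha\to\pm\infty}\limsup_j L^+((1-\alpha)\xi+\alpha\xi_j)<\infty$. By (1) this is bounded above by $\tfrac12\limsup_\alpha\limsup_j[L(2((1-\alpha)\xi+\alpha\xi_j))+L^-(2\fint_X((1-\alpha)\xi+\alpha\xi_j)\,d\mu_u)]$. For each fixed $\alpha$, $(1-\alpha)\xi+\alpha\xi_j\to\xi$ $\mu_u$-a.e.\ with a uniform (in $j$) sup bound, so strong continuity of $L$ sends the first term to $L(2\xi)$, while the second tends to $L^-(2\fint_X\xi\,d\mu_u)$ (finite by Theorem 8.3(a) applied to $\cA^-_u$, and continuous in the constant argument by convexity); both limits are independent of $\alpha$. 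Lemma 8.5 then yields $L^+(\xi_j)\to L^+(\xi)$ together with $L^+(\xi)<\infty$. Strong continuity and finiteness of $L^-$ and $L^{|\,|}$ are proved analogously from (2) and (3).
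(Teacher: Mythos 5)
Your handling of the first, second, and fourth inequalities and of strong continuity is essentially sound. For the first inequality you in effect re-prove the estimate $\sup_{f\sim f_0}\int_X\xi f_+\,d\mu_u\le\sup_{f\sim f_0}\int_X\xi f\,d\mu_u+\int_Xf_{0-}\,d\mu_u\fint_X\xi\,d\mu_u$ by Chebyshev's inequality on the tail interval where $f^\star\le0$; the paper simply quotes this as \cite[(7.4)]{L1}, so your argument is a self-contained substitute and it checks out. One caveat on the second inequality: carried to the end, your dual-Lagrangian substitution yields $2L^-(\xi)\le L(-2\xi)+L^+\big(2\fint_X\xi\,d\mu_u\big)$, with a \emph{plus} sign in the constant argument, not the minus sign printed in the statement. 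The printed sign appears to be a typo (test $\cA_u=\{(0,1)\}$, i.e.\ $L(\zeta)=\int_X\zeta\,d\mu_u$, with $\int_X\xi\,d\mu_u>0$: the stated inequality reads $0\le-4\int_X\xi\,d\mu_u$), and the paper's own substitution $f\mapsto-f$ likewise produces the plus sign; you should state explicitly which version you are proving. The fourth inequality is the same two-line convexity argument as in the paper, and your direct verification of the hypothesis of Lemma 8.5 for $L^\pm$, $L^{|\,|}$ is a legitimate alternative to the paper's route through the uniform-integrability criterion of Theorem 8.3(b) --- granted the three inequalities.

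The genuine gap is the third inequality, and you have located it yourself. The reduction $2L^{|\,|}(\xi)\le L^+(2|\xi|)+L^-(2|\xi|)$ is fine, but feeding it into (1) and (2) leaves terms $L(\pm4|\xi|)$, and fiberwise convexity alone cannot trade $L$ evaluated at a multiple of $|\xi|$ for $L$ evaluated at multiples of $\pm\xi$: the identities $|\xi|=\xi_++\xi_-$ and $2\xi_\pm=|\xi|\pm\xi$ send $L(c|\xi|)$ to $L(c'\xi_\pm)$ and straight back to $L(c''|\xi|)$ with undiminished weight, so the recursion never terminates and nothing gets absorbed into $L^{|\,|}\big(6\fint_X|\xi|\,d\mu_u\big)$. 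The missing ingredient is not a convexity identity but a rearrangement estimate: the paper invokes \cite[Lemma 7.5]{L1},
\[
\sup_{f\sim f_0}\int_X|\xi f|\,d\mu\le4\sup_{f\sim f_0}\Big|\int_X\xi f\,d\mu\Big|+3\fint_X|\xi|\,d\mu\int_X|f_0|\,d\mu,
\]
whose content is that by choosing the rearrangement of $f$ within its equidistribution class one can align its sign pattern with that of $\xi$, up to an error controlled by the averages; the constants $8$ and $6$ are just $2\cdot4$ and $2\cdot3$ after the split $2(a+A+B)\le(a+2A)+(a+2B)$. Without this lemma, or an independent proof of it, your third inequality does not close, and with it the strong continuity of $L^{|\,|}$ is also left unestablished.
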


In (8.17) averages such as $\fint_X\xi \,d\mu_u=\int_X\xi \,d\mu_u / V$  are  constant functions on $X$, and are viewed as $\in T_u^1\cE$. 
The interest of the theorem is that in various situations it allows one to replace $L$ by $\Lambda=L^{|\, |}$, which is absolutely monotone in the following sense:
\begin{defn}       
A function $\Lambda: T^1\cE\to (-\infty, \infty]$ is absolutely monotone if $\xi,\eta\in T^1_u\cE$, $|\xi|\le|\eta|$ imply 
$\Lambda(\xi)\le \Lambda(\eta)$.
\end{defn}
\begin{proof}
The proof depends on certain estimates proved in \cite{L1}.\footnote{\cite{L1} typically works with bounded
functions, and for this reason the results we cite in this proof and later directly apply only to bounded 
$\xi$. Nonetheless,
the estimates follow for general $\xi\in L^1(X,\mu_u)$ if we replace $\xi$ by $\max\big(\min(\xi, c),-c\big)$,
then let the constant $c\to \infty$.} Again, it suffices to work with $u=0$; we write $\mu_0=\mu$. Let 
$(a,f_0)\in\cA_0$. By \cite[(7.4)]{L1}
\begin{align*}          
2a+2\sup_{f\sim f_0} \int_X\xi f_+&\le 
a+2\sup_{f\sim f_0} \int_X \xi f \,d\mu+a+2\int_X f_{0-} \,d\mu\fint_X \xi \,d\mu\\
&\le L(2\xi)+L^-\Big(2\fint_X\xi \,d\mu\Big).
\end{align*}
Passing to the supremum over all $(a,f_0)\in\cA$ gives the first estimate in (8.17). The second estimate follows by the same computation, with $f$ replaced by $g=-f$.

To prove the third estimate we use \cite[Lemma 7.5]{L1}: 
\[
\sup_{f\sim f_0} \int_X |\xi f| \,d\mu \le 4\sup_{f\sim f_0}\Big |\int_X\xi f \,d\mu\Big|+3\fint_X|\xi|\,d\mu\int_X |f_0| \,d\mu.
\]
Therefore
\begin{equation}         
2\sup_{f\sim f_0}a+ \int_X |\xi f| \,d\mu \le 
\sup_{f\sim f_0}\Big(a+ \Big|\int_X 8\xi f \,d\mu\Big|\Big)+a+6 \fint_X|\xi|\,d\mu \int |f_0| \,d\mu. 
\end{equation}
Now
\[
a+\Big|\int_X8\xi f \,d\mu\Big|=\max \Big(a+\int_X 8\xi f \,d\mu, a-\int_X 8\xi f \,d\mu\Big)\le \max \big(L(8\xi), L(-8\xi)\big);
\]
taking the supremum over all $(a,f_0)\in \cA_0$ in (8.18) thus gives the third estimate in (8.17). As to the last estimate, from the definition $L(\xi)\le L^{|\, |}(\xi)$ and $L(\xi)-L(0)\le L^{|\,|}(\xi)-L^{|\,|}(0)$. By convexity $2L(0)\le L(\xi)+L(-\xi)$, whence
\[
L(0)-L(\xi)\le L(-\xi)-L(0)\le L^{|\, |}(-\xi)-L(0)=L^{|\, |}(\xi)-L^{|\, |}(0),
\]
and so indeed $|L(\xi)-L(0)|\le L^{|\, |}(\xi)-L^{|\, |}(0)$.

The last claim of Theorem 8.6 follows from Theorem 8.3b, as $L^\pm(0)=L^{|\, |}(0)=L(0)$.
\end{proof}

There is another way to introduce the class of Lagrangians of (8.14).

\begin{lem}       
A function $L:T^1\cE\to (-\infty,\infty]$ can be represented in the form (8.14) with an invariant family 
$\cA_u\subset \bR\times B(X)$, $u\in\cE$, if and only if it is strict rearrangement invariant, and on the fibers $T^1_u\cE$ it 
is convex and lower semicontinuous.
\end{lem}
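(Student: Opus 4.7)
The forward direction is direct. If $L$ is given by (8.14), then on each fiber $T_u^1\cE=L^1(X,\mu_u)$ the function $L$ is a pointwise supremum of continuous affine functionals, hence convex and lower semicontinuous in the $L^1$ topology. For strict rearrangement invariance, suppose $\xi\in T_u^1\cE$ and $\eta\in T_v^1\cE$ are equidistributed. Since $(X,\mu_u)$ and $(X,\mu_v)$ are isomorphic measure spaces (via \cite[p.409, Theorem 16]{R}), a standard fact about equidistributed functions on standard measure spaces yields a measure preserving bijection $\sigma:(X,\mu_u)\to(X,\mu_v)$ with $\xi=\eta\circ\sigma$ a.e. For any $(a,f)\in\cA_u$, the function $g=f\circ\sigma^{-1}\in B(X)$ satisfies $(f,\mu_u)\sim(g,\mu_v)$, so $(a,g)\in\cA_v$ by invariance, and change of variable gives
\[
a+\int_X\xi f\,d\mu_u=a+\int_X\eta g\,d\mu_v\le L(\eta).
\]
Taking the supremum over $(a,f)\in\cA_u$ and using symmetry, $L(\xi)=L(\eta)$.

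For the converse, assume $L$ is strict rearrangement invariant and fiberwise convex and lower semicontinuous. If $L\equiv+\infty$ (which occurs on all fibers simultaneously by invariance), take the trivially invariant family $\cA_u=\{(n,0):n\in\bN\}$. Otherwise fix the fiber over $u=0$. Then $L|T_0^1\cE$ is proper, convex, and lower semicontinuous on the Banach space $L^1(X,\mu_0)$, so by the Fenchel--Moreau theorem it is the pointwise supremum of its continuous affine minorants. Each such minorant has the form $\xi\mapsto a+\int_X\xi f\,d\mu_0$ with $a\in\bR$ and $f\in L^\infty(X,\mu_0)$; choosing bounded Borel representatives, set
\[
\cA_0=\Big\{(a,f)\in\bR\times B(X):\ a+\int_X\xi f\,d\mu_0\le L(\xi)\text{ for all }\xi\in T_0^1\cE\Big\},
\]
so that $L(\xi)=\sup_{(a,f)\in\cA_0}a+\int_X\xi f\,d\mu_0$ on $T_0^1\cE$.

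Next I show $\cA_0$ is closed under equidistribution within $(X,\mu_0)$. If $(a,f)\in\cA_0$ and $h\in B(X)$ is $\mu_0$-equidistributed to $f$, pick a $\mu_0$-measure-preserving bijection $\sigma$ of $X$ with $h=f\circ\sigma$ a.e. Then for every $\xi\in T_0^1\cE$,
\[
a+\int_X\xi h\,d\mu_0=a+\int_X(\xi\circ\sigma^{-1})f\,d\mu_0\le L(\xi\circ\sigma^{-1})=L(\xi),
\]
the last equality by the strict rearrangement invariance of $L$; hence $(a,h)\in\cA_0$. Now define, for general $u\in\cE$,
\[
\cA_u=\{(a,g)\in\bR\times B(X):\ (g,\mu_u)\sim(f,\mu_0)\text{ for some }(a,f)\in\cA_0\}.
\]
The closure of $\cA_0$ under equidistribution makes the family $\{\cA_u\}_{u\in\cE}$ invariant in the sense of Definition 8.2. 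To verify the representation (8.14) on an arbitrary fiber, pick a measure preserving $\theta_u:(X,\mu_u)\to(X,\mu_0)$; strict rearrangement invariance of $L$ gives $L(\xi)=L(\xi\circ\theta_u^{-1})$, and the Fenchel--Moreau representation on $T_0^1\cE$ combined with the change of variable $\int_X(\xi\circ\theta_u^{-1})f\,d\mu_0=\int_X\xi(f\circ\theta_u)\,d\mu_u$ realises $L(\xi)$ as a supremum over pairs $(a,f\circ\theta_u)\in\cA_u$. The reverse inequality uses the same bijection trick as in the closure argument.

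The main obstacle is the systematic use of measure preserving bijections realising equidistributed pairs of functions, both within $(X,\mu_0)$ (for the closure of $\cA_0$) and between $(X,\mu_u)$ and $(X,\mu_0)$ (for the extension to other fibers). This is exactly the content of the standardness/isomorphism theorem cited in the paragraph following Definition 8.2; once that is in hand, everything else is a direct application of Fenchel--Moreau duality in $L^1$.
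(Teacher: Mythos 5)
Your overall architecture coincides with the paper's: the forward direction is the ``supremum of continuous affine functionals'' observation plus rearrangement invariance, and the converse is Fenchel--Moreau duality on the fiber $T^1_0\cE=L^1(X,\mu_0)$ (the paper invokes Fenchel's theorem and sets $\cA_u=\{(-L^*(f),f)\}$, while you take all continuous affine minorants; both give the same supremum). Your separate treatment of $L\equiv+\infty$ and your identification of $(L^1)^*$ with bounded Borel representatives are fine.

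The genuine gap is the device you rely on throughout and explicitly flag as the crux: that two equidistributed functions $\xi$ on $(X,\mu_u)$ and $\eta$ on $(X,\mu_v)$ are always intertwined by a measure preserving bijection $\sigma$ with $\xi=\eta\circ\sigma$ a.e. This is not what the isomorphism theorem cited after Definition 8.2 provides (that theorem produces an isomorphism of the underlying measure spaces, with no reference to a prescribed pair $\xi,\eta$), and the statement is false in general: take $X=[0,1]^2$ with Lebesgue measure, $f(x,y)=x$, and $h$ a Borel measure isomorphism of $[0,1]^2$ onto $[0,1]$. Then $f\sim h$ (both push the measure forward to Lebesgue measure on $[0,1]$), but any $\sigma$ with $h=f\circ\sigma$ must send each singleton fiber $h^{-1}(t)$ into the segment $f^{-1}(t)$, so the image of $\sigma$ meets every vertical segment in a single point and is a null set; $\sigma$ cannot be a bijection mod $0$. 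Equidistribution does not control the conditional (fiber) structure of a function, so no such ``standard fact'' exists. Fortunately, every conclusion you draw from this device is correct and recoverable: in each instance all you need is the Hardy--Littlewood identity $\int_X\xi h\,d\mu\le\int_0^V\xi^{\star}h^{\star}=\sup_{\xi'\sim\xi}\int_X\xi'h\,d\mu$ together with the rearrangement invariance of $L$. For example, the closure of $\cA_0$ under equidistribution follows from $a+\int_X\xi h\,d\mu_0\le a+\sup_{\xi'\sim\xi}\int_X\xi'f\,d\mu_0\le\sup_{\xi'\sim\xi}L(\xi')=L(\xi)$ whenever $(a,f)\in\cA_0$ and $h\sim f$, and the invariance of $L$ in the forward direction follows similarly from $a+\int_X\xi f\,d\mu_u\le a+\int_0^V\eta^{\star v}f^{\star u}=a+\sup_{g\sim f}\int_X\eta g\,d\mu_v\le L(\eta)$. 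This ``similarly ordered'' supremum is precisely the tool the paper uses, via \cite[Lemma 7.2]{L1} (equivalently \cite[Theorem 2.6, p. 49]{BS}); substituting it for your bijections repairs the proof without altering its structure.
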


\begin{proof}
If $L$ is representable in the form (8.14), then as the supremum of continuous affine functions on $T^1_u\cE$, it is convex and lsc. If the family $\cA_u$, $u\in\cE$, is invariant and $\xi\in T^1_u\cE(\omega)$, then for every $(a,f_0)\in\cA_u$
\[
a+\sup_{(f,\mu_u)\sim(f_0,\mu_u)} \int_X\xi f \,d\mu_u=a+\int^V_0\xi^{\star u} f^{\star u},
\]
since the sup above occurs when $f$ and $\xi$ are `similarly ordered', see e.g. 
\cite[Lemma 7.2]{L1}, whence $\xi f$ and $\xi^{\star u} f^{\star u}$ are equidistributed. Hence
\[
L(\xi)=\sup_{(a,f)\in\cA_u} a +\int^V_0\xi^{\star u} f^{\star u},
\]
obviously invariant under strict rearrangements.

Conversely, suppose $L$ is strict rearrangement invariant, and convex and lsc on the fibers $T^1_u\cE$. Let $L^*$ denote its fiberwise conjugate convex function (Fenchel--Legendre transform)
\[
L^*(f)=\sup\Big\{\int_X\xi f \,d \mu_u-L(\xi): \xi\in T^1_u\cE\Big\},\qquad f\in B(X, \mu_u).
\]
By Fenchel's theorem \cite[p.175]{IT}
\[
L(\xi)=\sup\Big\{\int_X\xi f \,d\mu_u -L^*(f): f\in B(X,\mu_u)\Big\},\qquad\xi\in T^1_u\cE,
\]
and so
\begin{equation}           
\cA_u=\big\{\big(-L^*(f),f\big): f\in B(X,\mu_u)\big\}
\end{equation}
gives the required representation (8.14).
\end{proof}

\section{The bundle $T^L\cE\to\cE$}      

With the Lagrangians $L:T^1\cE\to(-\infty, \infty]$ studied in the previous section here we will associate a subbundle 
$T^L\cE\subset T^1\cE$, and through it, in the next section, an energy space $\cE^L\subset\cE$. In the simplest cases, 
such as $L(\xi)=\int_X|\xi|^p\,d\mu_u$, the subbundle consists of $\xi$ for which $L(\xi)$ is finite. In general the 
definition of $T^L\cE$ has to be more involved for two reasons. First, $L(\xi)<\infty$ in general does not imply $L(\alpha\xi)<\infty$ 
for values of $\alpha\in\bR$ other than $\alpha\in [0,1]$. Second, to work efficiently with $L$ we need to restrict it to a subbundle 
on which it has a certain fine continuity property.

\begin{defn}      
Consider a function $L:T^1\cE\to (-\infty,\infty]$ with $L(0)$ finite. It is absolutely continuous on $\xi\in T^1_u\cE$ if for every 
$\varepsilon>0$ there is a $\delta>0$ such that
\begin{equation}         
|L(\xi 1_E)-L(0)|<\varepsilon\quad\text{whenever}\quad \mu_u(E)\le\delta.
\end{equation}
If $\Xi\subset T^1\cE$, we say $L$ is absolutely equicontinuous on $\Xi$ if for every $\varepsilon>0$ there is a $\delta>0$ such that (9.1) holds whenever $u\in\cE$, $\xi\in\Xi\cap T^1_u\cE$, and $\mu_u(E)\le\delta$.
\end{defn}

The term `absolutely continuous' is borrowed from the theory of rearrangement invariant Banach spaces, see e.g. 
\cite[p.13]{BS}. As an example, $L(\xi)=\int_X|\xi|^p\,d\mu_u$, $1\le p<\infty$, is absolutely continuous on $\xi$ if and only if $L(\xi)<\infty$; but $L(\xi)=\text{ess\,sup}_X|\xi|$ is absolutely continuous only on $\xi=0$. In general, if $\xi,\eta\in T^1\cE$ are equidistributed,  and an invariant $L$ is absolutely continuous on $\xi$, it is absolutely continuous on $\eta$, too.

Henceforward we will work with strict rearrangement invariant Lagrangians $L:T^1\cE\to(-\infty,\infty]$ that are convex and lsc on the fibers, and we assume $L$ is finite on $T^\infty\cE$. Starting with such $L$, we can represent it through the family $\cA_u$ constructed in (8.19); then we can form $\cA_u^{|\, |}$ and 
$L^{|\, |}$ of (8.15), (8.16). Because of Theorem 8.3a, $L^{|\, |}$ is also finite on $T^\infty\cE$. It is of course strict rearrangement invariant, and fiberwise convex and lsc. 

Now to the definition of $T^L\cE$. Whether to include $\xi\in T^1\cE$ into $T^L\cE$ will depend not only on how $L$ interacts with $\xi$, but also on how it interacts with $\eta$ that are allied to $\xi$, in the following sense.

\begin{defn}    
We say that $\eta\in T^1_v\cE$ is allied to $\xi\in T^1_u\cE$ if there are $\alpha\in\bR$ and $\beta\in(0, 1]$ such that $\eta^{\star v}(s)=(\alpha\xi)^{\star u}(\beta s)$ for $0<s<V$.
\end{defn}

If this relation holds, we will say $\eta^{\star v}$ is the $(\alpha,\beta)$-rescaling of $\xi^{\star u}$.---In particular, $\alpha=\beta=1$ corresponds to equidistribution.

\begin{defn}       
We let $T^L_u\cE$ consist of $\xi\in T^1_u\cE$ such that $L$ is finite and absolutely continuous on any $\eta\in T^1\cE$ (or only on any $\eta\in T^1_u\cE$) allied to it. Further, we let $T^L\cE=\coprod_{u\in\cE} T^L_u\cE$, a subbundle of $T^1\cE\to\cE$.
\end{defn}

If $L(\xi)=\int_X\chi(\xi) \,d\mu_u$ with some convex function $\chi:\bR\to\bR$, then the definition of $T^L\cE$ simplifies to requiring $L(\alpha\xi)<\infty$ for all real $\alpha$; and if $\chi$ is quasihomogeneous, $\chi(t)/c \le\chi(\pm 2t)\le c\chi(t)$ with some $c>0$, then simply $T^L\cE=\{\xi\in T^1\cE: L(\xi)<\infty\}$. 

One could also associate with $L$ another, in general larger subbundle $T^{(L)}\cE\to\cE$, consisting of $\xi\in T^1\cE$ for which there is an $\alpha_0>0$ such that $L$ is finite and absolutely continuous on $\eta$ whenever 
$\eta^\star(s)=(\alpha\xi)^{\star}(\beta s)$ with $\alpha\in(-\alpha_0,\alpha_0)$, $\beta\in(0, 1]$. The theory would be similar, but we will not pursue it here. 

\begin{lem}    
(a)\  $T^L_u\cE\subset T^1_u\cE$ is a vector subspace.\newline
(b)\ If $\xi\in T^1_u\cE$, $\xi', \xi''\in T^L_u\cE$, and $\xi'\le\xi\le\xi''$ $\mu_u$--a.e., then $\xi\in T^L_u\cE$ and $|\xi|\in T^L_u\cE$. \newline
(c)\ If $\Lambda=L^{|\, |}$, then $T^{\Lambda}\cE=T^L\cE$.
\end{lem}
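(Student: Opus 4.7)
Plan: I prove the three parts in tandem, as they are interrelated; the strategy is to bootstrap from easy observations through the absolute-value case.

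First I dispose of the easy parts. Closure of $T^L_u\cE$ under scalar multiplication (half of (a)) is immediate: if $\xi\in T^L_u\cE$ and $c\in\bR$, any $\eta$ allied to $c\xi$ via $(\alpha,\beta)$ is allied to $\xi$ via $(c\alpha,\beta)$, so the definition of $T^L_u\cE$ transfers. For (c), the inclusion $T^\Lambda\cE\subset T^L\cE$ follows from the last estimate in (8.17), $|L(\zeta)-L(0)|\le\Lambda(\zeta)-\Lambda(0)$: applied to $\zeta=\eta$ and $\zeta=\eta 1_E$ for any $\eta$ allied to $\xi$, with $\Lambda(\eta 1_E)\ge\Lambda(0)$ by absolute monotonicity of $\Lambda$ (evident from its defining formula $\Lambda(\xi)=\sup_{(a,f)\in\cA_u}a+\int|\xi f|\,d\mu_u$), this transfers finiteness and absolute continuity from $\Lambda$ to $L$.

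For the reverse direction $T^L\cE\subset T^\Lambda\cE$, given $\xi\in T^L_u\cE$ and $\eta$ allied to $\xi$ via $(\alpha,\beta)$, I apply the third estimate in (8.17):
\[
2\Lambda(\eta)\le\max\bigl(L(8\eta),L(-8\eta)\bigr)+\Lambda\Bigl(6\fint_X|\eta|\,d\mu_v\Bigr).
\]
The constant $\Lambda$-term is finite by Theorem 8.3 applied to $\Lambda$, which inherits fiberwise finiteness on $T^\infty\cE$ from $L$ (conclusion of Theorem 8.6). The term $L(8\eta)$ is finite because $8\eta$ is allied to $\xi$ via $(8\alpha,\beta)$. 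The main obstacle is $L(-8\eta)$: when $\beta<1$, the function $-\eta$ is not allied to $\xi$ (a direct computation of rearrangements shows this requires $\beta=1$), so the $T^L$ condition does not apply directly. I reduce this via $\Lambda(\eta)=\Lambda(|\eta|)$ to bounding $\Lambda$ on the nonnegative function $|\eta|$, which in turn calls for the auxiliary fact $|\xi|\in T^L_u\cE$ whenever $\xi\in T^L_u\cE$, which is part of (b).

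To establish $|\xi|\in T^L_u\cE$, let $\zeta\ge 0$ be allied to $|\xi|$ via $(\alpha',\beta')$ with $\alpha'\ge 0$; then $\zeta^{\star v}(s)=\alpha'|\xi|^{\star u}(\beta's)$. Using $|\xi|=\xi_++\xi_-$ and Lemma 2.2 applied to $F(x,y)=x+y$, I get $|\xi|^{\star u}(t)\le\xi_+^{\star u}(t/2)+\xi_-^{\star u}(t/2)$. A direct calculation shows that $\xi_+^{\star u}$ is dominated pointwise by the rearrangement of a function allied to $\xi$ via $(1,s_0/V)$ (where $s_0=\mu_u(\xi\ge 0)$), and similarly $\xi_-^{\star u}$ by one allied to $\xi$ via $(-1,(V-s_0)/V)$, both with second parameter in $(0,1]$. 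Absolute monotonicity of $\Lambda$ and the rearrangement-monotonicity it induces on nonnegative functions (realize representatives by a common measure-preserving map) yield $\Lambda(\zeta)$ finite, and analogously $L(\zeta)$ finite via the easy direction of (c); absolute continuity is obtained by restricting to $\zeta 1_E$. The case $\alpha'<0$ is reduced by negation since $\Lambda$ and the entire argument depend only on $|\zeta|$.

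With $|\xi|\in T^L_u\cE$ in hand, closure under addition follows: for $\xi_1,\xi_2\in T^L_u\cE$ and $\eta$ allied to $\xi_1+\xi_2$ via $(\alpha,\beta)$, Lemma 2.2 yields $|\eta|^{\star v}(s)\le|\alpha|\bigl(|\xi_1|^{\star u}(\beta s/2)+|\xi_2|^{\star u}(\beta s/2)\bigr)$, and each summand is controlled by $|\xi_i|\in T^L_u\cE=T^\Lambda_u\cE$ via $\Lambda$'s rearrangement-monotonicity, completing (a). Part (b) for general sandwiched $\xi'\le\xi\le\xi''$ in $T^L_u\cE$ then follows from $|\xi|\le|\xi'|+|\xi''|$ by the same argument. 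The established membership $|\xi|\in T^L_u\cE$ also closes the loop for (c): the troublesome $L(-8\eta)$ in the third estimate is now handled because $|\!-\!8\eta|=8|\eta|$ has rearrangement allied to $|\xi|$ (with the same $\beta$), and $|\xi|\in T^L_u\cE$ gives finiteness and absolute continuity of $L$ on this class, which combined with the estimate $|L(-8\eta)-L(0)|\le\Lambda(-8\eta)-\Lambda(0)=\Lambda(8\eta)-\Lambda(0)$ closes the bound. The main obstacle throughout is the asymmetry of the allied relation under negation when $\beta<1$, which forces the circuitous passage through absolute values.
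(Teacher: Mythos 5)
You have put your finger on the genuine subtlety here --- that when $\beta<1$ the negative of a function allied to $\xi$ is in general \emph{not} allied to $\xi$, so the term $L(-8\eta)$ in the third estimate of (8.17) is not controlled by the hypothesis $\xi\in T^L_u\cE$ --- but your workaround does not resolve the difficulty; it runs in a circle. To bound $\Lambda(\eta)$ you pass to $\Lambda(|\eta|)$ and invoke the auxiliary fact $|\xi|\in T^L_u\cE$; but to convert membership of $|\xi|$ in $T^L\cE$ into control of $\Lambda$ on functions dominated by allies of $|\xi|$ (which is what every absolute--monotonicity argument requires, since $L$ itself is not monotone) you need exactly the inclusion $T^L\cE\subset T^{\Lambda}\cE$ you are trying to prove. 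The same circle sits inside your proof that $|\xi|\in T^L_u\cE$: the step ``absolute monotonicity of $\Lambda$ \dots\ yields $\Lambda(\zeta)$ finite'' presupposes that $\Lambda$ is finite on allies of $\xi$, whereas the hypothesis $\xi\in T^L_u\cE$ only gives finiteness of $L$ there. Your final patch, $|L(-8\eta)-L(0)|\le\Lambda(-8\eta)-\Lambda(0)=\Lambda(8\eta)-\Lambda(0)$, bounds the quantity you are after by a dilated copy of itself. Separately, the assertion that $8|\eta|$ is allied to $|\xi|$ with the same $\beta$ is false: if $\eta^{\star v}(s)=(\alpha\xi)^{\star u}(\beta s)$ with $\beta<1$, the rearrangement of $|\eta|$ combines the $\beta$--rescaled top of the distribution of $\alpha\xi$ with its unrescaled bottom, and need not be any $(\alpha',\beta')$--rescaling of $|\xi|^{\star u}$ (it is typically not even affine when $\xi^{\star u}$ is). What is true is only the one--sided domination $|\eta|^{\star v}(s)\le((\alpha\xi)_+)^{\star u}(\beta s/2)+((\alpha\xi)_-)^{\star u}(\beta s/2)$, which is useless for the non--monotone $L$.

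The paper's proof has the opposite logical architecture, which is what lets it avoid the cycle: after normalizing $L(0)=0$ it establishes (c) first and on its own, directly from the estimates of Theorem 8.6, and only \emph{then} replaces $L$ by $L^{|\,|}$ so that all subsequent arguments take place for an absolutely monotone Lagrangian. In that setting the sandwich $|\xi|\le|\xi'|+|\xi''|$ gives (b) by convexity, and (a) follows from the Lemma 2.2 splitting $\big(\alpha(\xi+\eta)\big)^{\star}(\beta s)\le|\alpha\xi|^{\star}(\beta s/2)+|\alpha\eta|^{\star}(\beta s/2)$ together with a measure--preserving pullback --- steps your write--up essentially reproduces. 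So your treatments of (a) and (b) are sound \emph{given} the absolutely monotone reduction; the defect is entirely in how you try to reach that reduction. To repair the argument you need a proof of $T^L\cE\subset T^{\Lambda}\cE$ that does not presuppose any part of (a) or (b), which is where the real content of the lemma lies.
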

\begin{proof}
If we add a constant to $L$, the bundles $T^L\cE, T^\Lambda\cE$ are not going to change. For this reason we can assume $L(0)=0=\Lambda(0)$. (c) now follows from the estimates in Theorem 8.6. Therefore in the rest of the proof, at the price of replacing $L$ by $L^{|\, |}$, we can assume that $L$ is absolutely monotone (Definition 8.7). In particular, $L(\xi)=L(|\xi|)\ge 0$. 

To prove (b) it now suffices to show that if $\xi,\xi', \xi''\in T^1_u\cE$, $L$ is finite and absolutely continuous on 
$\alpha\xi', \alpha\xi''$ for all $\alpha\in\bR$, and $\xi'\le\xi\le\xi''$, then $L$ is finite and absolutely continuous on $\xi$. But this follows since $|\xi|\le|\xi'| +|\xi''|$, and so with any Borel set $E\subset X$
\[
L(\xi 1_E)\le L(|\xi'| 1_E+|\xi''| 1_E)\le \big(L(2\xi' 1_E)+L(2\xi'' 1_E)\big)/2.
\]

Finally, to prove (a) we need to show that if $\xi, \eta\in T^L_u\cE$, and $\zeta\in T^1_u\cE$ is allied to 
$\xi+\eta$, then $L$ is 
finite and absolutely continuous on $\zeta$ or, equivalently, on $|\zeta|$. Suppose 
$\zeta^\star(s)=\big(\alpha(\xi+\eta)\big)^\star (\beta s)$. Upon multiplying $\xi, \eta$ with a suitable unimodular function,  we can arrange that $\zeta^\star\ge 0$. Then
\[
\zeta^\star(s)\le|\alpha|(|\xi|+|\eta|)^\star (\beta s)\le |\alpha\xi|^\star (\beta s/2)+|\alpha\eta|^\star (\beta s/2),
\]
the last inequality by Lemma 2.2, say. 
Choose $\xi_1,\eta_1\in T^1_u\cE$ such that 
$\xi_1^\star (s)=|\alpha\xi|^\star (\beta s/2)$, $\eta_1^\star(s)=|\alpha \eta|^\star (\beta s/2)$. These $\xi_1, \eta_1$ can be 
obtained by pulling back e.g. $\alpha|\xi|^\star(\beta s/2)$ by a measure preserving $\theta:(X,\mu_u)\to (0, V)$. Let $\zeta_1$ 
be the pull back of $\zeta^\star$ by the same $\theta$. Then $0\le\zeta_1\le\xi_1+\eta_1$. Since  with any Borel set 
$E\subset X$
\[
L(\zeta_1 1_E)\le L(\xi_1 1_E+\eta_1 1_E)\le \big(L(2\xi_1 1_E)+L(2\eta_1 1_E)\big)/2,
\]
$L$ is finite and absolutely continuous on $\zeta_1$, hence on the equidistributed $\zeta$ as well.
\end{proof}

In $T^L\cE$ a version of the Dominated Convergence Theorem holds:

\begin{lem}         
If $\xi_j, \eta_1, \eta_2\in T^L_u\cE$ for $j=1, 2,\dots$, $\eta_1\le\xi_j\le\eta_2$, and $\xi_j\to\xi$
holds $\mu_u$--a.e., then $L(\xi_j)\to L(\xi)$.
\end{lem}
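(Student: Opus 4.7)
The strategy has two stages: first reduce to the absolutely monotone partner $\Lambda := L^{|\,|}$, then prove the dominated convergence statement for $\Lambda$ directly via Egorov and convexity.

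For the reduction, by Lemma 9.5(c) we have $T^L\cE=T^\Lambda\cE$, so for each $\alpha\in\bR$ the vectors $(1-\alpha)\xi+\alpha\xi_j$ are dominated by $|\xi|+|\alpha|(\eta_2-\eta_1)\in T^\Lambda_u\cE$ and converge a.e.\ to $\xi$ as $j\to\infty$. Granted the statement for $\Lambda$, this gives $\Lambda((1-\alpha)\xi+\alpha\xi_j)\to\Lambda(\xi)$; combined with the last estimate of Theorem 8.6 (namely $L(\zeta)\le L(0)+\Lambda(\zeta)-\Lambda(0)$), we obtain
\[\limsup_{j\to\infty}L\big((1-\alpha)\xi+\alpha\xi_j\big)\le L(0)+\Lambda(\xi)-\Lambda(0)<\infty\]
uniformly in $\alpha$, so Lemma 8.5 applied with $l=L$ delivers $L(\xi_j)\to L(\xi)$.

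For the $\Lambda$-statement, since $\Lambda(\cdot)=\Lambda(|\cdot|)$ we may assume $0\le\xi_j,\xi\le\eta:=|\eta_1|+|\eta_2|\in T^\Lambda_u\cE$. Lemma 8.8 and $L^1$ convergence (from ordinary dominated convergence) give $\Lambda(\xi)\le\liminf_j\Lambda(\xi_j)$. For the matching upper bound, Egorov yields $X_\rho\subset X$ with $\mu_u(X\setminus X_\rho)<\rho$ on which $\xi_j\to\xi$ uniformly, and absolute monotonicity gives $\Lambda(\xi_j)\le\Lambda(\xi_j1_{X_\rho}+\eta1_{X\setminus X_\rho})\le\Lambda(\phi_\rho+c_j)$, where $\phi_\rho:=\xi1_{X_\rho}+\eta1_{X\setminus X_\rho}$ and $c_j:=\sup_{X_\rho}|\xi_j-\xi|\to 0$. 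Writing $\phi_\rho+c_j=(1-\sqrt{c_j})\phi_\rho+\sqrt{c_j}(\phi_\rho+\sqrt{c_j})$, convexity and monotonicity yield $\Lambda(\phi_\rho+c_j)\le(1-\sqrt{c_j})\Lambda(\phi_\rho)+\sqrt{c_j}\Lambda(\phi_\rho+1)\to\Lambda(\phi_\rho)$, since $\phi_\rho+1\le\eta+1\in T^\Lambda_u\cE$. Hence $\limsup_j\Lambda(\xi_j)\le\Lambda(\phi_\rho)$ for every $\rho>0$.

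It remains to let $\rho\to 0$. Write $\phi_\rho=\xi+(\eta-\xi)1_{X\setminus X_\rho}$. For $t\in(0,1)$ the convex identity $\phi_\rho=(1-t)\xi+t\big(\xi+(\eta-\xi)1_{X\setminus X_\rho}/t\big)$, followed by monotonicity ($\eta-\xi\le\eta$) and a midpoint convexity bound, yields
\[\Lambda(\phi_\rho)\le(1-t)\Lambda(\xi)+\tfrac{t}{2}\Lambda(2\xi)+\tfrac{t}{2}\Lambda\big((2\eta/t)1_{X\setminus X_\rho}\big).\]
Since $2\eta/t$ is allied to $\eta\in T^\Lambda_u\cE$, absolute continuity sends the last term to $\Lambda(0)$ as $\rho\to 0$; then letting $t\to 0$ gives $\limsup_\rho\Lambda(\phi_\rho)\le\Lambda(\xi)$. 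The main obstacle is the absence of a triangle inequality or subadditivity for $\Lambda$: every step that would be a one-line norm estimate in standard Banach function space theory must here be replaced by a chained convexity bound in which the dilation parameters ($\sqrt{c_j}$, $t$, $1/t$) are calibrated so that small prefactors overwhelm potentially large $\Lambda$-values.
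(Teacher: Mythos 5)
Your proof is correct, and its skeleton --- applying Lemma 8.5 to $\xi_j(\alpha)=(1-\alpha)\xi+\alpha\xi_j$, reducing to the absolutely monotone Lagrangian $\Lambda=L^{|\,|}$ via Theorem 8.6, and an Egorov--plus--calibrated--convexity estimate --- is exactly the paper's. Where you diverge is in how much you prove about $\Lambda$: you establish genuine two-sided convergence $\Lambda(\xi_j(\alpha))\to\Lambda(\xi)$, getting the lower bound from fiberwise lower semicontinuity (Lemma 8.8) together with ordinary dominated convergence in $L^1(\mu_u)$, and the upper bound from a two-parameter convexity chain (first $\sqrt{c_j}$, then $t$) that also requires the extra limits $\rho\to0$ and $t\to0$. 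The paper instead observes that Lemma 8.5 needs only an upper bound, and that $|L-L(0)|\le L^{|\,|}-L^{|\,|}(0)$ converts a one-sided bound on $L^{|\,|}$ into what is needed; it therefore stops at the single three-term estimate $4L^{|\,|}(\xi_j(\alpha))\le 2L^{|\,|}\big(2\xi_j(\alpha)1_E\big)+L^{|\,|}\big(4(\xi_j(\alpha)-\xi)1_{X\setminus E}\big)+L^{|\,|}(4\xi 1_{X\setminus E})$, whose right-hand side is eventually $\le 2+L^{|\,|}(4\xi)$ uniformly in $\alpha$, with no liminf argument at all. Your route costs more work but proves more (the dominated convergence statement for $\Lambda$ itself); the paper's is shorter. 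Two small repairs: the citation ``Lemma 9.5(c)'' should read Lemma 9.4(c); and since the standing hypothesis $1\in T^L\cE$ is imposed only after Lemma 9.6, you should not assert $\eta+1\in T^\Lambda_u\cE$ --- but all you use is $\Lambda(\eta+1)<\infty$, which follows from $\Lambda(\eta+1)\le\tfrac12\Lambda(2\eta)+\tfrac12\Lambda(2)$ and the finiteness of $L^{|\,|}$ on $T^\infty\cE$.
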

\begin{proof}
Let $\xi_j(\alpha)=(1-\alpha)\xi+\alpha\xi_j$, $\alpha\in\bR$. 
The claim will follow from Lemma 8.5 once we show that
\begin{equation}             
\sup_{\alpha\in\bR}\limsup_{j\to\infty} |L(\xi_j(\alpha))|<\infty.
\end{equation}

At the price of replacing $L$ by $L^{| \, |}-L(0)$, we can assume $L$ is absolutely monotone and $L(0)=0$, cf. Theorem 8.6,
Lemma 9.4. By this lemma $\eta=|\eta_1|+|\eta_2|\in T^L_u\cE$. Given $\alpha$, choose $\delta>0$ so that 
$L((4\alpha+2)\eta 1_E)<1$ 
when $\mu_u(E)\le\delta$, and choose such an $E$ outside which $\xi_j$ converges uniformly. By convexity
\begin{equation}             
0\le 4L\big(\xi_j(\alpha)\big)\le 
2L\big(2\xi_j(\alpha) 1_E\big)+L\big(4(\xi_j(\alpha)-\xi) 1_{X\setminus E}\big)+L(4\xi 1_{X\setminus E}).
\end{equation}
Since $|\xi_j(\alpha)|\le (2|\alpha|+1)\eta$, the first term on the right is $\le 2L\big((4|\alpha|+2)\eta 1_E\big)<2$. 
The second term tends to 0 as $j\to\infty$ for the following reason. The function $\bR\ni\lambda\mapsto L(\lambda)\in\bR$ 
is convex, hence continuous. Therefore letting $\lambda_j=\sup_{X\setminus E} 4|\xi_j(\alpha)-\xi|$
 \[
 L\big(4(\xi_j(\alpha)-\xi) 1_{X\setminus E}\big)\le L(\lambda_j)\to 0\quad\text{as } j\to\infty,
 \]
since $\xi_j(\alpha)\to\xi$ uniformly on $X\setminus E$. Finally, the last term in (9.3) is bounded by $L(4\xi)$. Hence $\limsup_j 4L(\xi_j(\alpha))\le 2+L(4\xi)$, and (9.2) indeed holds.
\end{proof}

Absolute continuity of Definition 9.1 and strong continuity of Definition 2.4 are related.

\begin{lem} The following are equivalent:      
\item{(i)}\ $L|T^\infty\cE$ is strongly continuous;
\item{(ii)}\ $T^\infty\cE\subset T^L \cE$;
\item{(iii)}\  $1\in T^L\cE$.
\end{lem}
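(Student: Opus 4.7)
The plan is to prove the implications in a cycle, exploiting the vector space and sandwich properties of $T^L_u\cE$ (Lemma 9.4) and the dominated convergence principle (Lemma 9.5) that we already have in hand.

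The easy half is the chain (iii) $\Rightarrow$ (ii) $\Rightarrow$ (i) $\Rightarrow$ (iii) going through (ii) from below. Assuming $1\in T^L\cE$, Lemma 9.4(a) gives $c\in T^L_u\cE$ for every real constant $c$; and then for arbitrary bounded $\xi\in T^\infty_u\cE$, the sandwich $-\sup_X|\xi|\le\xi\le\sup_X|\xi|$ together with Lemma 9.4(b) shows $\xi\in T^L_u\cE$. This gives (ii). For (ii) $\Rightarrow$ (i): if $\xi_j\in T^\infty_u\cE$ are uniformly bounded by $c$ and converge $\mu_u$--a.e.\ to $\xi$, then $\pm c, \xi_j, \xi$ all lie in $T^L_u\cE$ by (ii), and Lemma 9.5 applied with $\eta_1=-c$, $\eta_2=c$ yields $L(\xi_j)\to L(\xi)$, which is strong continuity on $T^\infty_u\cE$.

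The content is in (i) $\Rightarrow$ (iii). First observe that an $\eta\in T^1_v\cE$ is allied to $1\in T^1_u\cE$ iff $\eta^{\star v}(s)=(\alpha\cdot 1)^{\star u}(\beta s)=\alpha$ for some $\alpha\in\bR$, $\beta\in(0,1]$; that is, iff $\eta=\alpha$ holds $\mu_v$--a.e. So to prove $1\in T^L_u\cE$ it suffices to show that for every $v\in\cE$ and every $\alpha\in\bR$, $L$ is finite and absolutely continuous on the constant function $\alpha\in T^1_v\cE$. Finiteness is immediate from the standing assumption that $L$ is finite on $T^\infty\cE$. For absolute continuity, suppose it fails: there exist $\alpha\in\bR$, $\varepsilon>0$, and Borel sets $E_j\subset X$ with $\mu_v(E_j)\to 0$ but $|L(\alpha 1_{E_j})-L(0)|\ge\varepsilon$. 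Passing to a subsequence we may assume $\mu_v(E_j)\le 2^{-j}$, so by Borel--Cantelli $1_{E_j}\to 0$ holds $\mu_v$--a.e. The functions $\alpha 1_{E_j}\in T^\infty_v\cE$ are uniformly bounded by $|\alpha|$ and converge $\mu_v$--a.e.\ to $0$, so strong continuity (i) forces $L(\alpha 1_{E_j})\to L(0)$, contradicting the choice of $E_j$. Finally, (ii) $\Rightarrow$ (iii) is trivial since $1\in T^\infty_u\cE$ for every $u$, closing the cycle.

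The only step requiring more than a reference to the preceding lemmas is (i) $\Rightarrow$ (iii), and even there the main obstacle is the mismatch between $\mu_v(E_j)\to 0$ (i.e.\ convergence in measure of $1_{E_j}$) and the a.e.\ convergence demanded by Definition 2.4; this is handled by the standard Borel--Cantelli subsequence trick. No other delicate analytic input is needed.
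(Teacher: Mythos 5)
Your proof is correct and uses essentially the same ingredients as the paper's: the Borel--Cantelli subsequence trick combined with strong continuity for the hard implication, Lemma 9.5 for (ii)$\Rightarrow$(i), and the vector-space and sandwich properties of Lemma 9.4 for the passage between (ii) and (iii). The only difference is cosmetic -- the paper proves (i)$\Rightarrow$(ii) directly for arbitrary bounded $\xi$, whereas you prove (i)$\Rightarrow$(iii) for constants and then recover (ii) from (iii); your explicit identification of the allies of $1$ as a.e.\ constant functions is a nice touch the paper leaves implicit.
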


\begin{proof}

\ (i)$\Rightarrow$ (ii). We need to show that if $\xi\in T^\infty_u\cE$ and $E_j\subset X$ satisfy 
$\mu_u(E_j)\to 0$, then $L(\xi 1_{E_j})\to L(0)$. If this failed, we could find $\varepsilon>0$ and $E_j$ such that $\sum^\infty_{j=1}\mu_u (E_j)<\infty$ and $|L(\xi 1_{E_j})-L(0)|>\varepsilon$. But $\xi_j=\xi 1_{E_j}$ are uniformly bounded and tend $\mu_u$--a.e. to 0, hence $L(\xi_j)\to L(0)$, a contradiction.

 (ii)$\Rightarrow$ (i) is a special case of Lemma 9.5, and
 (ii) $\Leftrightarrow$ (iii) is partly obvious, partly follows from Lemma 9.4.
\end{proof}
{\sl Henceforward we assume $L$ satisfies the equivalent conditions of Lemma 9.6.}

Our Lagrangian $L$ induces a functional $L_\star$ on $L^1(0,V)$. If $u\in\cE$ and 
$\theta:(X,\mu_u)\to \big((0, V)$, Lebesgue\big) preserves measure, then with $\zeta\in L^1(0,V)$ (cf. (2.12))
\begin{equation}             
L_\star(\zeta)=L(\zeta\circ\theta)\in(-\infty,\infty],\quad \zeta\circ\theta \text{ viewed as } \in T^1_u\cE.
\end{equation}

We denote by $\cE^L(0,V)\subset L^1(0,V)$ the subspace of those $\zeta$ for which $\zeta\circ\theta\in T^L\cE$. The invariance 
of $L$ implies that $L_\star$ and $\cE^L(0, V)$ are independent of the choice of $u$ and $\theta$. We have for $\xi\in T^1\cE$
\[
L(\xi)=L_\star(\xi^\star),\quad\text{and}\quad \xi\in T^L\cE\quad\text{if and only if}\quad \xi^\star\in\cE^L(0,V).
\]

Part (b) of the next result suggests that $T^L\cE$ is complete in some sense:

\begin{lem}        
(a)\ Let $\xi_j\in T^1_u\cE$, $j\in\bN$. If $L$ is absolutely continuous on $\alpha\xi_j$ and $\lim_{i,j\to\infty} L(\alpha(\xi_i-\xi_j))=L(0)$ for all $\alpha\in\bR$, then $L$ is absolutely equicontinuous on the family $\{\xi_j: j\in\bN\}$.

(b)\ Let $\xi_j\in T^L_u\cE$, $j\in\bN$. Suppose for every $\alpha\in\bR$, $\beta\in(0,1]$ and 
$\zeta_j(s)=\alpha\xi^\star_j(\beta s)$, $s\in (0, V)$, we have $\lim_{i,j\to\infty} L_\star(\zeta_i-\zeta_j)=L(0)$. If $\mu_u$--a.e. 
$\xi_j\to\xi$, then $\xi\in T^L_u\cE$, $\lim_{j\to\infty} L(\xi_j-\xi)=L(0)$ and $\lim_{j\to\infty} L(\xi_j)=L(\xi)$.
\end{lem}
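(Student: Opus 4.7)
\ For part (a), I would first reduce via Theorem 8.6 and Lemma 9.4(c) to the case where $L$ is absolutely monotone and $L(0)=0$, so that $L\ge 0$. Given $\varepsilon>0$, the Cauchy hypothesis applied with $\alpha=2$ picks $N$ such that $L(2(\xi_j-\xi_N))<\varepsilon$ for all $j\ge N$, and absolute continuity of $L$ on each $2\xi_k$, $k\le N$, yields a common $\delta>0$ with $L(2\xi_k 1_E)<\varepsilon$ whenever $\mu_u(E)\le\delta$. For $j\ge N$, fiberwise convexity together with absolute monotonicity (used to drop $1_E$ in the second summand) gives
\[
L(\xi_j 1_E)\le \tfrac12 L(2\xi_N 1_E)+\tfrac12 L\bigl(2(\xi_j-\xi_N)1_E\bigr)\le\tfrac12\varepsilon+\tfrac12 L\bigl(2(\xi_j-\xi_N)\bigr)<\varepsilon,
\]
while for $j<N$ absolute monotonicity alone gives $L(\xi_j 1_E)\le L(2\xi_j 1_E)<\varepsilon$.

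For part (b), after the analogous reduction I would apply part (a) at the level of rearrangements. Fix $\alpha\in\bR$ and $\beta\in(0,1]$, and set $\zeta_j(s)=\alpha\xi_j^\star(\beta s)$, $\zeta(s)=(\alpha\xi)^\star(\beta s)$. Lemma 2.1 turns $\mu_u$-a.e.\ convergence $\xi_j\to\xi$ into a.e.\ convergence $\xi_j^\star\to\xi^\star$ on $(0,V)$, so $\zeta_j\to\zeta$ a.e. Each $\zeta_j$ is the pullback under a measure-preserving $\theta\colon(X,\mu_u)\to(0,V)$ of an element allied to $\xi_j$, hence lies in $\cE^L(0,V)$; rescaling the free parameter $\alpha$ in the hypothesis shows $L_\star\bigl(\alpha'(\zeta_i-\zeta_j)\bigr)\to L(0)$ for every $\alpha'\in\bR$, so part (a), applied to $\{\zeta_j\}$ on $(0,V)$ with Lagrangian $L_\star$, yields absolute equicontinuity of $L_\star$ on $\{\zeta_j\}$.

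Now I would invoke Egorov on $(0,V)$: for any $\delta>0$ there is a set $F$ of Lebesgue measure $\le\delta$ outside which $\zeta_j\to\zeta$ uniformly. Splitting each function as a sum of its restrictions to $F$ and to $(0,V)\setminus F$ and applying fiberwise convexity, off $F$ I use uniform convergence and strong continuity (Lemma 2.8) to get $L_\star$-control, while on $F$ equicontinuity handles the $\zeta_j$-pieces and a lower-semicontinuity step squeezes in the $\zeta$-piece. This delivers $L_\star(\zeta)<\infty$, absolute continuity of $L_\star$ on $\zeta$, and $L_\star(\zeta-\zeta_j)\to L(0)$. Since $(\alpha,\beta)$ was arbitrary, $\xi\in T^L_u\cE$, and running the identical Egorov-plus-splitting template on $(X,\mu_u)$---converting the equicontinuity at $(\alpha,\beta)=(2,1)$ into equicontinuity of $\{2\xi_j 1_E\}$ via the comparison $(\xi_j 1_E)^\star\le\xi_j^\star 1_{(0,\mu_u(E))}$, and using absolute continuity of $L$ on $2\xi$ for the $\xi$-piece---delivers $L(\xi_j-\xi)\to L(0)$. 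Running it with a further factor of $\alpha$ bounds $L(\alpha(\xi_j-\xi))$ uniformly in large $j$ for each $\alpha$, so Lemma 8.5 applied along the line from $\xi$ to $\xi_j$ gives $L(\xi_j)\to L(\xi)$.

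The main obstacle is the lower-semicontinuity step for the $\zeta$-piece on $F$: $L_\star$ is fiberwise lower semicontinuous for the $L^1$-norm, but only a.e.\ convergence $\zeta_j\to\zeta$ is available. I would address this by combining absolute equicontinuity of $\{\zeta_j\}$ with the $L^1$-domination implicit in (8.3) to produce uniform integrability of $\{\zeta_j\}$, extract via Dunford--Pettis a weakly $L^1$-convergent subsequence whose weak limit must be $\zeta$, and then invoke weak lower semicontinuity of convex lower semicontinuous functionals to get $L_\star(\zeta 1_F)\le\liminf_j L_\star(\zeta_j 1_F)\le\varepsilon$.
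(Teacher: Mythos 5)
Part (a) of your proposal is essentially the paper's argument verbatim: reduce via Theorem 8.6 to $L$ absolutely monotone with $L(0)=0$, pick $N$ from the Cauchy hypothesis with $\alpha=2$, pick a single $\delta$ for the finitely many $\xi_1,\dots,\xi_N$, and split $2L(\xi_i 1_E)\le L\bigl(2(\xi_i-\xi_N)1_E\bigr)+L(2\xi_N 1_E)$, dropping $1_E$ in the first term by absolute monotonicity. Part (b) also follows the paper's skeleton: transfer to $(0,V)$ by a measure preserving $\theta$, apply part (a) to the rescaled rearrangements to get absolute equicontinuity, Egorov plus the convexity splitting into $F$ and its complement, and Lemma 8.5 along the lines $\xi_j(\alpha)=(1-\alpha)\xi+\alpha\xi_j$ at the end.

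The one place you genuinely diverge is the step you yourself flag as the main obstacle, and there the proposal has a gap. To bound $L_\star(\zeta 1_F)$ (equivalently, to prove $L(\xi)<\infty$ and absolute continuity of $L$ on $\xi$) you invoke Dunford--Pettis, which requires uniform integrability of $\{\zeta_j\}$ in $L^1(0,V)$. That does not follow from the hypotheses: absolute equicontinuity of $L$ on the family is a statement about the Lagrangian, not about $\int|\zeta_j|$, and (8.3) gives an \emph{upper} bound for $L$ in terms of $\sup|\xi|$, not a lower bound of the form $L(\eta)\ge c\int|\eta|\,d\mu$; such a lower bound, (9.5), is only available when $L$ is a norm. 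A degenerate but admissible example is $L\equiv L(0)$: then $T^L\cE=T^1\cE$, every sequence in $T^1_u\cE$ satisfies your hypotheses, and no uniform integrability is available, so a.e.\ convergence gives no weakly convergent subsequence and the weak lower semicontinuity argument cannot start.

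The gap is local and fixable without weak compactness, and the fix is what the paper does. After reducing to the absolutely monotone $L^{|\,|}$, represent it as $L^{|\,|}(\eta)=\sup_{(a,f)\in\cA_u}a+\int_X|\eta f|\,d\mu_u$; for each fixed $(a,f)$, Fatou's lemma applied to the nonnegative integrands $|\zeta_j f|$ gives $a+\int_F|\zeta f|\,d\mu\le\liminf_j L^{|\,|}(\zeta_j 1_F)$, and taking the supremum over $(a,f)$ yields precisely the lower semicontinuity under a.e.\ convergence you need. The paper phrases this equivalently by truncating $\xi$ at level $k$, using strong continuity on the uniformly bounded truncations $\min(k,|\xi_j|)$ to get $L\bigl(\min(k,|\xi|)\bigr)\le M$, and then letting $k\to\infty$ by monotone convergence inside the supremum. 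With that substitution your argument closes.
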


The condition in (b) that $\xi_j$ converge almost everywhere often can be replaced by requiring
$\lim_{i,j\to\infty}L(\xi_i-\xi_j)=0$, and the existence of $\xi$ as in the conclusion still follows. For example, if $L$ is a norm on 
$T^L_u\cE$, then one can show that 
\begin{equation}             
L(\eta)\ge c\int_X|\eta| \,d\mu_u,\quad \eta\in T^L_u\cE,
\end{equation}
with some $c>0$. \cite[Lemma 6.1]{L1} gives this for $\eta\in T^\infty_u\cE$, but the same proof works for $\eta\in T^L_u\cE$. 
Alternatively, one can approximate $\eta\in T^L_u\cE$ by bounded $\eta_j\in T^\infty_u\cE$ and obtain (9.5) by dominated 
convergence,  Lemma 9.4. Once (9.5) is known, $L(\xi_i-\xi_j)\to 0$ implies $\xi_j$ is a Cauchy sequence in $T^1_u\cE$, hence 
converges to some $\xi\in T^1_u\cE$. Applying Lemma 9.7b to subsequences $\xi_{j_k}$ then gives 
$\xi\in T^L_u\cE$ and the rest.---In this case the norms $||\xi||_\beta=L_\star\big(\xi^\star(\beta\,\cdot)\big)$,
$0<\beta\le 1$, endow $T^L_u\cE$ with the structure of a metrizable topological vector space, which is 
complete (i.e., Fr\'echet) according to the above discussion.

\begin{proof}
Except for the very last limit in (b), $\lim_{j}L(\xi_j)=L(\xi)$, we can assume that $L$ is absolutely monotone and $L(0)=0$, in light of Theorem 8.6.

(a)\ Given $\varepsilon>0$, choose $j$ so 
 that $L\big(2(\xi_i-\xi_j)\big)<\varepsilon$ when $i>j$. Next choose $\delta>0$ so that $L(2\xi_i 1_E)<\varepsilon$ if 
 $i=1,2,\dots, j$ and $\mu_u(E)\le\delta$. With such $E$ and $i>j$
\[
2L(\xi_i 1_E)\le L\big(2(\xi_i-\xi_j) 1_E\big)+L(2\xi_j 1_E)<2\varepsilon.
\]
As $L(\xi_i 1_E)\le L(2\xi_i 1_E)<\varepsilon$ if $i\le j$, $L$ is indeed absolutely equicontinuous on $\{\xi_j: j\in\bN\}$.

(b)\ First observe that $\sup_j L(\xi_j)=M<\infty$: as before, we choose $j$ so that 
$L_\star \big(2(\xi^\star_i-\xi^\star_j)\big)<1$ when $i>j$, then
\[
2L(\xi_i)=2L_\star(\xi_i^\star)\le L_\star(2\xi_j^\star)+L_\star\big(2(\xi_i^\star-\xi_j^\star)\big)<L(2\xi_j)+1,\quad i>j.
\]

This also implies $L(\xi)\le M$ as follows. If $k\in\bN$, let $\xi_{jk}=\min(k,|\xi_j|)\in T^\infty_u\cE$. Then $\mu_u$--a.e. 
$\lim_j\xi_{jk}=\min(k,|\xi|)$, and by strong continuity
\[
L(\min(k,|\xi|))=\lim_{j\to\infty} L(\xi_{jk})\le M.
\]
We represent $L$ with a suitable $\cA_u\subset \bR\times B(X)$ as 
\[
L(\eta)=\sup_{(a,f)\in\cA_u} a+\int_X|\eta f| \,d\mu_u, \qquad\eta\in T^1\cE.
\]
If $(a,f)\in\cA_u$, by monotone convergence
\[
a+\int_X|\xi f|\,d\mu_u=\lim_{k\to\infty} a+\int_X\min (k, |\xi|) |f| \,d\mu_u\le\limsup_{k\to\infty} L\big(\min(k,|\xi|)\big)\le M,
\]
and so $L(\xi)\le M$.

Next, let $\theta:(X,\mu_u)\to \big((0,V),$ Lebesgue\big) preserve measure. By part (a), $L$ is absolutely equicontinuous on $\{\xi^\star_j\circ\theta, j\in\bN\}$, whence also on $\{\xi_j: j\in\bN\}$. Therefore the computation above, with $\xi$ replaced by $\xi 1_E$ gives that $L$ is absolutely continuous on $\xi$. But we can also replace $\xi$ by any of its allies $\eta$, to conclude that $\xi\in T^L_u\cE$, indeed.

To prove the two limits in part (b), we no longer assume $L$ is absolutely monotone, but it is convenient to keep the assumption $L(0)=0$. As before, with $\alpha\in\bR$ we let
\[
\xi_j(\alpha)=(1-\alpha)\xi+\alpha\xi_j\in T^L_u\cE,
\]
and estimate $L\big(\xi_j(\alpha)\big)$.
For fixed $\alpha$, by part (a), $L$ and $L^{|\, |}$ are absolutely equicontinuous on the family 
$\{\xi_j(\alpha)^\star\circ\theta: j\in\bN\}$, hence on $\{\xi_j(\alpha): j\in\bN\}$. Choose $\delta>0$ so that 
$L^{|\, |}\big(2\xi_j(\alpha) 1_E\big)< 1$ when $\mu_u(E)\le\delta$; and choose such an $E$ outside which 
$\xi_j\to\xi$ uniformly. Then, by (8.17) and by convexity, 
\begin{align*}
4|L\big(\xi_j(\alpha)\big)|&\le 4L^{|\, |}\big(\xi_j(\alpha)\big)\\
&\le 2L^{|\, |}\big(2\xi_j(\alpha)1_E\big)+L^{|\, |}\big(4(\xi_j(\alpha)-\xi)1_{X\setminus E}\big)+L^{|\, |}(4\xi 1_{X\setminus E}).
\end{align*}
As in the proof of Lemma 9.5, the first term on the right is $<2$, the last is $\le L^{|\, |}(4\xi)<\infty$, and since 
$\xi_j(\alpha)\to\xi$ uniformly on $X\setminus E$, the middle term goes to 0 as $j\to\infty$. Therefore 
$\limsup_j|L(\xi_j(\alpha))|\le 2+L^{|\, |}(4\xi)$, and Lemma 8.5 implies $\lim_j L(\xi_j)=L(\xi)$. Finally we
note that since $\xi\in T^L_u\cE$, $L$ is uniformly equicontinuous on $\{\alpha(\xi_j-\xi):j\in\bN\}$ for fixed 
$\alpha\in\bR$. Therefore in the above estimate we can
replace $\xi_j$ by $\xi_j-\xi\in T^L_u\cE$, to obtain $\lim_j L(\xi_j-\xi)=0$.
\end{proof}

\section{Rise, energy, action}      

In this section we fix a strict rearrangement invariant Lagrangian $L:T^1\cE\to(-\infty,\infty]$ that is convex and lower semicontinuous on the fibers  $T^1_u\cE$. We assume that $L$ is finite and absolutely continuous on 
elements of $T^\infty\cE$, i.e., $T^\infty\cE\subset T^L\cE$, cf. Definitions 9.1--3 and Lemma 9.6. 
Generalizing Guedj--Zeriahi's high energy classes and Mabuchi's and Darvas's metrics [Da1--2, M, GZ1--2], 
we will introduce energy classes $\cE^L\subset \cE$ of $\omega$--plurisubharmonic functions and define 
action between elements of $\cE^L$. We will show that this action can be computed from the rise, and as an 
application, we prove a Principle of Least Action in $\cE^L$.

Recall that $u\in\cE$ belongs to the energy space $\cE^1$ if $u\in L^1(X,\mu_u)$, or $u\in T^1_u\cE$. 

\begin{defn}       
The energy space\footnote{The reader will notice an inconsistency in our notation, and should keep it in mind to avoid confusion. $\cE^1, \cE^\infty$ are not $\cE^L$ when $L\equiv 1$, resp. $\infty$. Instead, $\cE^1$ corresponds to $L(\xi)=\int_X|\xi|\,d\mu_u$, while the space $\cE^\infty$ is not of the type introduced here,
because the Lagrangian $L(\xi)= \text{ess\,sup}_X |\xi|$ is not allowed in this section. The same inconsistency already occurs in the notation $T^L\cE$.} $\cE^L$ consists of $u\in\cE^1$ that, viewed as elements of $T^1_u\cE$, are in $T^L_u\cE$.
\end{defn}

\begin{lem}      
A function $u\in\cE^1$ is in $\cE^L$ if and only if $\rho[u,v]\in\cE^L(0,V)$ for some (equivalently: for every) $v\in \cE^\infty$.
\end{lem}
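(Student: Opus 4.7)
The plan is to reduce to the case $v=0$, $u\le 0$, use the two-sided comparison of Theorem 6.1 between $\rho[u,0]$ and $(-u)^{\star u}$, and then invoke the stability of $T^L\cE$ under allies and domination. For $v_1,v_2\in\cE^\infty$ with $c=\|v_1-v_2\|_\infty$, Lemma 5.10 gives $|\rho[u,v_1]-\rho[u,v_2]|\le c$ pointwise; since $T^\infty\cE\subset T^L\cE$ by Lemma 9.6, bounded functions on $(0,V)$ lie in $\cE^L(0,V)$, and Lemma 9.4 yields $\rho[u,v_1]\in\cE^L(0,V)\Leftrightarrow\rho[u,v_2]\in\cE^L(0,V)$. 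Both sides of the claimed equivalence are also invariant under $u\mapsto u-c$: the LHS because $\mu_{u-c}=\mu_u$ forces $T^L_{u-c}\cE=T^L_u\cE$ and constants lie in this space (Lemma 9.4(a)); the RHS because $\rho[u-c,v]=\rho[u,v]+c$ by Lemma 5.10(b). Since $u$ is upper semicontinuous on compact $X$, hence bounded above, I may take $v=0$ and assume $u\le 0$.

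With $u\le v=0$, formulas (6.1) and (6.2) yield, for $s\in(0,V)$,
\[
0\le\rho[u,0](s)\le(-u)^{\star u}(s)\quad\text{and}\quad(-u)^{\star u}(s)\le(n+1)\,\rho[u,0](s/e).
\]
Set $F_1=(-u)^{\star u}$ and $F_2=\rho[u,0]$; both are decreasing and nonnegative on $(0,V)$ and satisfy $F_2\le F_1\le(n+1)F_2(\cdot/e)$. I claim $F_1\in\cE^L(0,V)\Leftrightarrow F_2\in\cE^L(0,V)$. Pick any measure-preserving $\theta:(X,\mu_u)\to(0,V)$. The sandwich $0\le F_2\circ\theta\le F_1\circ\theta$ and Lemma 9.4(b) give the direction $F_1\Rightarrow F_2$. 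Conversely, if $F_2\circ\theta\in T^L_u\cE$, the function $G(s):=(n+1)F_2(s/e)$ satisfies $G=\alpha\,(F_2\circ\theta)^{\star u}(\beta\,\cdot)$ with $\alpha=n+1,\ \beta=1/e\in(0,1]$, so $G\circ\theta$ is an ally of $F_2\circ\theta$ in the sense of Definition 9.2. Any ally of $G\circ\theta$, having rearrangement $\alpha'(n+1)F_2((\beta'/e)\,\cdot)$ with $\beta'/e\in(0,1]$, is again an ally of $F_2\circ\theta$, so Definition 9.3 places $G\circ\theta\in T^L_u\cE$ as well. The inequality $0\le F_1\circ\theta\le G\circ\theta$ and Lemma 9.4(b) now give $F_1\circ\theta\in T^L_u\cE$, i.e., $F_1\in\cE^L(0,V)$.

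It remains to identify $F_1\in\cE^L(0,V)$ with $u\in\cE^L$. Since $F_1=(-u)^{\star u}$, the pullback $F_1\circ\theta$ is equidistributed with $-u$ on $(X,\mu_u)$, so the two are allies of each other (take $\alpha=\beta=1$) and have identical collections of allies; hence $F_1\in\cE^L(0,V)\Leftrightarrow F_1\circ\theta\in T^L_u\cE\Leftrightarrow -u\in T^L_u\cE\Leftrightarrow u\in T^L_u\cE\Leftrightarrow u\in\cE^L$, the next-to-last step by Lemma 9.4(a).

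The main obstacle is the non-trivial direction $\rho[u,0]\in\cE^L(0,V)\Rightarrow u\in\cE^L$: the easy bound $\rho[u,0]\le(-u)^{\star u}$ alone does not let one recover $u$ from $\rho[u,0]$. It is the lower estimate (6.2), obtained via the interpolant $w=(nu+v)/(n+1)$ in the proof of Theorem 6.1, which turns the comparison into an equivalence modulo the rescaling $s\mapsto s/e$; and this rescaling is admissible precisely because $\beta\in(0,1]$ is allowed in the definition of allies, which is exactly what motivates the ally-based definition of $T^L\cE$ rather than a definition using equidistribution alone.
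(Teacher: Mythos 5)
Your proof is correct and follows essentially the same route as the paper's: both directions rest on the two\nobreakdash-sided comparison of Theorem 6.1 between $\rho[u,v]$ and $(v-u)^{\star u}$, the sandwich and vector\nobreakdash-space properties from Lemma 9.4, the closure of $\cE^L(0,V)$ under $(\alpha,\beta)$\nobreakdash-rescaling, and the translation formulas of Lemma 5.10, with your normalization to $v=0$, $u\le 0$ merely repackaging the paper's shift by $c=\inf(v-u)$. (One small caveat: your identification of the rearrangement of an ally of $G\circ\theta$ as $\alpha'(n+1)F_2((\beta'/e)\,\cdot)$ is literally valid only for $\alpha'\ge 0$, since $\big(\alpha'(G\circ\theta)\big)^{\star u}\ne \alpha' G$ when $\alpha'<0$; the paper asserts the same rescaling\nobreakdash-closure of $\cE^L(0,V)$ without further comment, and the negative case is repaired by replacing $L$ with $L^{|\,|}$ and observing that the absolute value of such an ally is equidistributed with a positive\nobreakdash-parameter ally of $F_2\circ\theta$.)
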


Recall the definition (9.4) of the function $L_\star: L^1(0,V)\to (-\infty,\infty]$ and of the associated space 
$\cE^L(0,V)\subset L^1(0,V)$ in the paragraph after. Lemma 9.4 implies $\cE^L(0,V)$ is a vector space of functions, contains $L^\infty(0,V)$, and if $\zeta\in L^1(0,V)$, $\zeta', \zeta''\in\cE^L(0,V)$, and 
$\zeta'\le\zeta\le\zeta''$ a.e., then $\zeta, |\zeta|\in\cE^L(0,V)$. If $\zeta,\zeta'\in L^1(0,V)$ are equidistributed,
or $\zeta'^\star$ is the $(\alpha, \beta)$ rescaling of $\zeta^\star$, and $\zeta\in\cE^L(0,V)$, then 
$\zeta'\in\cE^L(0,V)$.

\begin{proof}
Suppose first $u\in\cE^L$. Thus $u$ and so  $-u\in T^L_u\cE$, whence $(-u)^{\star u}\in\cE^L(0,V)$. 
If $v\in\cE^\infty$, then $(v-u)^{\star u}=(-u)^{\star u}+O(1)\in\cE^L(0,V)$. With $c=\inf(v-u)>-\infty$ 
\[
c\le\rho[u,v]\le (v-u)^{\star u}
\]
by Theorem 6.1, and so $\rho[u,v]\in \cE^L(0,V)$.

Conversely, suppose $\rho[u,v]\in\cE^L(0,V)$, where $v\in\cE^\infty$. With $c$ above, $u+c\le v$. 
Taking into account that $\mu_{u+c}=\mu_u$, Theorem 6.1 implies for $0<s<V$
\begin{equation}         
\rho[u+c, v](s)\le (v-u-c)^{\star u}(s)\le (n+1)\rho [u+c, v](s/e).
\end{equation}
Here $\rho[u+c,v]=\rho[u,v]-c\in\cE^L(0,V)$, see Lemma 5.10. If $\xi\in\cE^L$ is chosen so that 
$\xi^{\star u}=\rho[u+c, v]$, then the function on the right of (10.1) is the decreasing rearrangement of some 
$\eta\in\cE^L$ allied with $\xi$, Definition 9.2. Thus both lower and upper estimates in (10.1) are in 
$\cE^L(0,V)$, hence so must be $(v-u-c)^{\star u}$. In other words $v-u-c\in T_u^L\cE$. Since
$v-c$ is bounded, $\pm u\in T^L_u\cE$ and  $u\in\cE^L$ follow.
\end{proof}
\begin{lem}           
If $u\in\cE^L$, $v\in\cE$ and $u\le v$, then $v\in\cE^L$.
\end{lem}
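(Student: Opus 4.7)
The plan is to obtain $v\in\cE^L$ in two stages: first establish $\rho[v,0]\in\cE^L(0,V)$ by a monotone sandwich against $u$ from below and a bounded truncation of $v$ from above, then convert this back into membership $v\in T^L_v\cE$ (which automatically supplies $v\in\cE^1$) by invoking Theorem 6.1 in the same way as the converse half of Lemma 10.2's proof.

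First I would sandwich the rise against $0\in\cE^\infty$. Set $v_c=v\vee(-c)\in\cE^\infty$ for $c>0$; these decrease to $v$ as $c\to\infty$ and satisfy $v_c\ge v\ge u$. By Lemma 9.6, $\cE^\infty\subset\cE^L$, so Lemma 10.2 (with $w=0$) gives $\rho[v_c,0],\rho[u,0]\in\cE^L(0,V)$. Antimonotonicity of rise in the first argument (Lemma 5.10(a)) yields
\[
\rho[v_c,0]\le\rho[v,0]\le\rho[u,0].
\]
Since $\rho[v,0]:(0,V)\to\bR$ is a decreasing function squeezed between two elements of $L^1(0,V)$, it lies in $L^1(0,V)$, and the sandwich stability of $\cE^L(0,V)$ recorded after Lemma 10.2 then gives $\rho[v,0]\in\cE^L(0,V)$.

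Next I would transfer this back to the tangent bundle. Put $M=\sup_Xv<\infty$ and $v'=v-M\le 0$; then $\mu_{v'}=\mu_v$ and $\rho[v',0]=\rho[v,0]+M$ is still in $\cE^L(0,V)$. Theorem 6.1 applied to the pair $v'\le 0$ provides
\[
\rho[v',0](s)\le(-v')^{\star v'}(s)\le(n+1)\rho[v',0](s/e),\qquad 0<s<V.
\]
The lower bound is in $\cE^L(0,V)$, and the upper bound is the $(n+1,1/e)$-rescaling of $\rho[v',0]$, hence also in $\cE^L(0,V)$ by the rescaling invariance noted after Lemma 10.2. A second sandwich gives $(-v')^{\star v'}\in\cE^L(0,V)$, i.e.\ $-v'\in T^L_{v'}\cE=T^L_v\cE$. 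Since $T^L_v\cE$ is a vector space (Lemma 9.4(a)) containing all constants ($T^\infty_v\cE\subset T^L_v\cE$, Lemma 9.6), we obtain $v=-(-v')+M\in T^L_v\cE$. This forces $v\in T^1_v\cE=L^1(\mu_v)$, so $v\in\cE^1$, and altogether $v\in\cE^L$.

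The main obstacle I anticipate is exactly this second stage: Lemma 10.2 presupposes $v\in\cE^1$, which is not given by hypothesis, so one cannot simply quote it to promote $\rho[v,0]\in\cE^L(0,V)$ to $v\in\cE^L$. Theorem 6.1's two-sided comparison of the rise with the rectilinear velocity, together with the $1/e$-rescaling trick, is precisely what bridges this gap and simultaneously delivers $v\in\cE^1$ and the desired $T^L$-regularity.
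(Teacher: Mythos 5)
Your proof is correct and rests on the same mechanism as the paper's: squeeze $\rho[v,0]$ between $\rho[u,0]$ and the rise of something bounded lying above $v$, then invoke the sandwich stability of $\cE^L(0,V)$ and Lemma 10.2. The paper does this in one line, taking the constant $c=\max v$ as the upper comparison point (so the lower bound is literally the constant $-c$), where you use the truncation $v\vee(-c)$; both work equally well. The one place where you genuinely add something is the point you flag yourself: Lemma 10.2 is stated for $v\in\cE^1$, and the paper's proof silently uses that $u\le v$ with $u\in\cE^1$ forces $v\in\cE^1$ (a standard monotonicity fact for energy classes, not cited there). Your second stage — normalizing to $v'=v-\sup v\le 0$ and running the two-sided comparison of Theorem 6.1 together with the $(n+1,1/e)$-rescaling invariance of $\cE^L(0,V)$, exactly as in the converse half of Lemma 10.2's proof — extracts $v\in L^1(X,\mu_v)$ and $v\in T^L_v\cE$ simultaneously, so the argument becomes self-contained. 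This costs a paragraph but removes the implicit appeal to an external fact; otherwise the two proofs coincide.
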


\begin{proof}
This follows from Lemmas 5.10, 10.2. If $c=\max v$, then $-c=\rho[c,0]\le\rho[v,0]\le\rho[u,0]$. Since $-c,\rho[u,0]\in\cE^L(0,V)$, 
also $\rho[v,0]\in\cE^L(0,V)$.
\end{proof}

Consider now $u,v\in\cE^L$ and pick a constant $c\ge u,v$. By Lemma 5.10
\begin{equation}         
\rho[c, v]\le \rho[u,v]\le \rho [u,c].
\end{equation}
Since $\rho[c,v]$ is the decreasing rearrangement of $-\rho[v,c]\in\cE^L(0,V)$, with respect to Lebesgue measure, the two extremes in (10.2) are in $\cE^L(0,V)$, hence so is $\rho[u,v]$.
In particular, $L_\star(\lambda\rho[u,v])<\infty$ with any $\lambda\in\bR$.

\begin{defn}         
If $T>0$, the action between $u,v\in\cE^L$ is
\[\cL_T(u,v)=TL_\star(\rho[u,v]/T)<\infty.\]
\end{defn}

\cite{L2} already defined action between $u,v\in\cE^\infty$, see (2.10). That the current definition is 
consistent with (2.10) will follow from the next result, in which $\cL_T$ is used in the sense of Definition 10.4.

\begin{lem} 
If $u_j,v_j\in\cE^L$ decrease to $u,v\in\cE^L$, then
\begin{equation}      
\cL_T(u,v)=\lim_{j\to\infty}\cL_T(u_j, v_j)
\end{equation}
\end{lem}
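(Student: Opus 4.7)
By Definition 10.4 the assertion (10.3) reduces to
\[
L_\star(\rho[u_j,v_j]/T)\longrightarrow L_\star(\rho[u,v]/T),
\]
and the natural plan is to derive this from the dominated convergence lemma for $L$ (Lemma 9.5), transported to $(0,V)$ via a measure--preserving map $\theta:(X,\mu_u)\to\bigl((0,V),\text{Leb}\bigr)$. Two ingredients are needed: a.e.\ convergence of $\rho[u_j,v_j]$ to $\rho[u,v]$, and a pair of fixed dominating functions lying in $\cE^L(0,V)$. The first is immediate from Theorem 5.7: at every continuity point of the (monotone, hence a.e.\ continuous) function $\rho[u,v]$, we have $\rho[u_j,v_j]\to\rho[u,v]$, so convergence holds Lebesgue a.e.\ on $(0,V)$.

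For the dominating bound, recall from Lemma 5.10(a) that the rise $\rho[\,\cdot\,,\,\cdot\,]$ is decreasing in its first argument and increasing in its second. Since $u\le u_j\le u_1$ and $v\le v_j\le v_1$, applying this monotonicity in each slot yields, uniformly in $j$,
\[
\rho[u_1,v]\;\le\;\rho[u_j,v_j]\;\le\;\rho[u,v_1].
\]
All four of $u,u_1,v,v_1$ lie in $\cE^L$, so by the discussion preceding Definition 10.4 both $\rho[u_1,v]$ and $\rho[u,v_1]$ belong to $\cE^L(0,V)$. Pulling back by $\theta$, the functions $\xi_j=(\rho[u_j,v_j]/T)\circ\theta$ lie in $T^1_u\cE$ and are sandwiched between fixed elements of $T^L_u\cE$; by Lemma 9.4(b) they themselves belong to $T^L_u\cE$, and they converge $\mu_u$--a.e.\ to $\xi=(\rho[u,v]/T)\circ\theta\in T^L_u\cE$. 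Lemma 9.5 gives $L(\xi_j)\to L(\xi)$, which via (9.4) is exactly the desired convergence $L_\star(\rho[u_j,v_j]/T)\to L_\star(\rho[u,v]/T)$, completing the proof.

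The only non--mechanical step is the domination: it hinges on combining the two--slot monotonicity of $\rho$ with the (already established) fact that the rise between any two elements of $\cE^L$ belongs to $\cE^L(0,V)$. Once this bound is in hand, Theorem 5.7 and Lemma 9.5 deliver the conclusion without further work.
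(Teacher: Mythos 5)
Your proof is correct and takes essentially the same route as the paper's: a.e.\ convergence of the rises (the statement you use is Theorem 5.6 rather than 5.7, a harmless mislabel), a two--sided domination obtained from the monotonicity in Lemma 5.10 (the paper sandwiches by $\rho[c,v]\le\rho[u_j,v_j]\le\rho[u,c]$ with a constant $c\ge u_1,v_1$, while you use $\rho[u_1,v]$ and $\rho[u,v_1]$ --- both valid), and then the dominated convergence Lemma 9.5 via (9.4). The extra care you take in invoking Lemma 9.4(b) to place the $\xi_j$ in $T^L_u\cE$ before applying Lemma 9.5 is a point the paper leaves implicit.
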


\begin{proof}
By Theorem 5.6 $\rho[u_j, v_j]\to\rho[u,v]$ almost everywhere. With a constant $c\ge u_1,v_1$ we 
have $\rho[c,v]\le\rho[u_j,v_j]\le\rho[u,c]$, and dominated convergence (Lemma 9.5), together with (9.4) yields 
$L_\star(\rho[u_j, v_j]/T)\to L_\star(\rho[u,v]/T)$, i.e., (10.3).
\end{proof}
It follows that Definition 10.4 and (2.10) give the same notion of action on $\cE^\infty$. This is so
over $\cH^{1\bar1}$ by Lemma 3.4; in general we take $u_j,v_j\in\cH^{1\bar1}$ that decrease to $u,v\in\cE^\infty$,
and note that (10.3) holds with the definition (2.3) too, by \cite[Lemma 9.4]{L2}.

In the special case when $L$ is Orlicz norm associated with a convex, `normalized', even $\chi:\bR\to\bR$, as in 
\cite[section 1.1]{Da1}, 
\[
L(\xi)=\inf\Big\{r>0: \int_X\chi(\xi/r) \,d\mu_u\le \chi(1)\Big\}, \quad \xi\in T^1_u\cE,
\]
$\cL_T$ is independent of $T$ and coincides with Darvas's distance function $d_\chi$ if $\chi$ is in one of the classes 
$\cW^+_p$, $1\le p<\infty$. That 
\begin{equation} 
d_\chi(u,v)=\cL_T(u,v)
\end{equation}
first follows from \cite[Theorem 1]{Da1} when $u,v\in\cH$; and for general $u,v\in\cE^L$ from Darvas's definition \cite[(5)]{Da1} 
of $d_\chi$ as a limit, and from (10.3).

A version of the triangle inequality holds for general $L$:
\begin{equation}       
\cL_S(u,v)+\cL_T(v,w)\ge \cL_{S+T}(u,w),\qquad u,v,w\in\cE^L.
\end{equation}
When $u,v,w\in\cE^\infty$, \cite[(5.3)]{L2} implies with any $a\in\bR$, e.g., 
\[
\cL_S(u,v)=\inf\int^{a+S}_a L(\partial_t\psi(t))dt,
\]
the infimum taken over all piecewise $C^1$ paths $\psi:[a,a+S]\to \cE^\infty\subset B(X)$; then (10.5) follows by concatenating 
paths. To general $u,v,w\in \cE^L$ we can decrease by $u_j, v_j, w_j\in\cE^\infty$. Then 
$\cL_S(u_j,v_j)+\cL_T(v_j, u_j)\ge\cL_{S+T}(u_j, w_j)$, 
and (10.5) follows by passing to the limit, cf. (10.3).

The notion of action of a path and the principle of least action of \cite{L2} can be extended from $\cE^\infty$ to $\cE^L$. Let $\varphi:[a,b]\to\cE^L$ be an arbitrary map. Its action is 
\[
\cL(\varphi)=\sup\sum^m_{i=1}\cL_{t_i-t_{i-1}}\big(\varphi(t_{i-1}),\varphi(t_i)\big)\le\infty,
\]
the supremum taken over all partitions $a=t_0<t_1<\ldots<t_m=b$. \cite[Theorem 10.1]{L2} identifies this quantity with 
$\int^b_a L(\partial_t\varphi(t))dt$ when $\varphi$ is a $C^1$-map into $\cE^\infty\subset B(X)$.

\begin{thm}[Principle of Least Action]          
If $\varphi:[a,b]\to\cE$ is a geodesic and $\varphi(a),\varphi(b)\in\cE^L$, then $\varphi(t)\in\cE^L$ for all $t\in(a,b)$. If, furthermore, $\psi:[a,b]\to\cE^L$ is any map with $\psi(a)=\varphi(a)$, $\psi(b)=\varphi(b)$, then $\cL(\psi)\ge\cL(\varphi)$. 
\end{thm}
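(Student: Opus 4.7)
\emph{Proof proposal.} The statement has two parts: (i) membership, $\varphi(t)\in\cE^L$ for $t\in(a,b)$; and (ii) minimality, $\cL(\psi)\ge\cL(\varphi)$. I will establish (i) by sandwiching $\varphi(t)$ between $\varphi(a)\wedge\varphi(b)$ and a suitable constant, then deduce (ii) from the conservation of rise along $\varphi$ together with the triangle inequality (10.5).

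For (i), pick a constant $c\ge\max(\sup_X\varphi(a),\sup_X\varphi(b))$; convexity of $t\mapsto\varphi(t)(x)$ gives $\varphi(t)\le c$, and since the constant path equal to $\varphi(a)\wedge\varphi(b)$ is a competitor in (1.1), one has $\varphi(a)\wedge\varphi(b)\le\varphi(t)$. By Lemma 5.10(a) and Corollary 6.4,
\[
0\le\rho[\varphi(t),c]\le\rho[\varphi(a)\wedge\varphi(b),c].
\]
The crucial step is to control the right side by the endpoint rises $\rho[\varphi(a),c]$ and $\rho[\varphi(b),c]$. For this I apply Lemma 6.6 with $u=\varphi(a)$, $v=\varphi(b)$, $w=c$, $\sigma=s/2$, followed by Theorem 6.1 in its form for $u\le v$, which together yield
\[
\rho[\varphi(a)\wedge\varphi(b),c](s)\le(n+1)\max\bigl(\rho[\varphi(a),c](s/(2e)),\;\rho[\varphi(b),c](s/(2e))\bigr).
\]
Since $\varphi(a),\varphi(b)\in\cE^L$, Lemma 10.2 places $\rho[\varphi(a),c]$ and $\rho[\varphi(b),c]$ in $\cE^L(0,V)$; their $(\cdot/(2e))$--rescalings correspond to allies in the sense of Definitions 9.2--9.3 and therefore also lie in $\cE^L(0,V)$, as does their max (dominated by their sum, which is in $\cE^L(0,V)$ by the vector space structure of $T^L\cE$ from Lemma 9.4). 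The sandwich then forces $\rho[\varphi(t),c]\in\cE^L(0,V)$, and Lemma 10.2 concludes $\varphi(t)\in\cE^L$.

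For (ii), the conservation of rise along geodesics remarked after Definition 1.3 yields $\rho[\varphi(t_{i-1}),\varphi(t_i)]=(t_i-t_{i-1})\rho_\varphi$ for every partition $a=t_0<\cdots<t_m=b$, hence
\[
\sum_{i=1}^m\cL_{t_i-t_{i-1}}\bigl(\varphi(t_{i-1}),\varphi(t_i)\bigr)=(b-a)L_\star(\rho_\varphi)=\cL_{b-a}\bigl(\varphi(a),\varphi(b)\bigr),
\]
independent of the partition, so $\cL(\varphi)=\cL_{b-a}(\varphi(a),\varphi(b))$. Iterating (10.5) for $\psi$ and using $\psi(a)=\varphi(a)$, $\psi(b)=\varphi(b)$ gives $\sum_i\cL_{t_i-t_{i-1}}(\psi(t_{i-1}),\psi(t_i))\ge\cL_{b-a}(\psi(a),\psi(b))=\cL(\varphi)$; taking supremum over partitions yields $\cL(\psi)\ge\cL(\varphi)$.

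The main obstacle is step (i), specifically finding a \emph{pointwise} upper bound for $\rho[\varphi(t),c]$ in terms of the endpoint data: the integrated triangle inequality of Theorem 5.9 is too weak for this purpose, but passing through the envelope $\varphi(a)\wedge\varphi(b)$ and invoking Lemma 6.6 converts the problem into one about allies of the endpoint rises $\rho[\varphi(\cdot\,),c]$. The loss of a factor of $1/(2e)$ in the argument is precisely what the closure of $\cE^L(0,V)$ under rescaling, built into Definitions 9.2--9.3, is designed to absorb.
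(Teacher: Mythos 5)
Your proof is correct and follows essentially the same route as the paper: passage through the envelope $\varphi(a)\wedge\varphi(b)$, the estimate of the last lemma of section~6 (Lemma 6.5, which you cite as 6.6) with $\sigma=s/2$, absorption of the rescaling loss into $\cE^L(0,V)$ via the ally mechanism, conservation of the rise along $\varphi$, and the triangle inequality (10.5). The only cosmetic difference is that in part (i) you route through Theorem 6.1 to convert $(c-\varphi(a))^{\star\varphi(a)}$ into a rescaled $\rho[\varphi(a),c]$, whereas the paper notes directly that $c-\varphi(a)\in T^L_{\varphi(a)}\cE$ and then invokes the upward monotonicity of $\cE^L$ (Lemma 10.3); both variants are sound.
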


This generalizes parts of \cite[Theorem 6]{Da2}, \cite[Theorem 4.11]{Da1}, and \cite[Theorem 1.1]{L2}. 

\begin{proof}
As in [Da1-2], we start by showing that $u,v\in\cE^L$ implies $u\wedge v\in\cE^L$. By
\cite[Corollary 3.5]{Da2} $u\wedge v\in \cE^1$. Choose a constant 
$c\ge u,v$. Lemma 6.5 implies 
\begin{equation*} 
0\le\rho[u\wedge v, c](s)\le(c-u)^{\star u}(s/2)+(c-v)^{\star v}( s/2),
\qquad 0<s<V.
\end{equation*}
Since the function on the right is in $\cE^L(0,V)$, as is $0$, so must be the function\
in the middle;  whence by Lemma 10.2  $u\wedge v\in\cE^L$.

Now with $\varphi$ of the theorem the constant path $\tva(t)\equiv \varphi(a)\wedge \varphi(b)$ is 
a subgeodesic, and  $\tva(a)\le \varphi(a)$, $\tva(b)\le \varphi(b)$. Therefore
\[
\varphi(a)\wedge\varphi(b)\le \varphi(t),\qquad t\in[a,b],
\]
and $\varphi(a)\wedge\varphi(b)\in\cE^L$, whence by Lemma 10.3 $\varphi(t)\in\cE^L$.

If $a\le t<t'\le b$ then
\[
\frac{\rho[\varphi(t), \varphi(t')]}{t'-t}=\rho_{\varphi|[t,t']}=\rho_\varphi=\frac{\rho[\varphi(a), \varphi(b)]}{b-a}.
\]
Hence, with any partition $a=t_0<t_1<\dots<t_m=b$
\begin{multline*}
\sum^m_{i=1}\cL_{t_i-t_{i-1}}\big(\varphi(t_{i-1}),\varphi(t_i)\big)=
\sum^m_{i=1}(t_i-t_{i-1})L_\star\Big(\frac{\rho[\varphi(t_{i-1}), \varphi(t_i)]}{t_i-t_{i-1}}\Big)\\
=\sum^m_{i=1}(t_i-t_{i-1})L_\star(\rho_\varphi)=(b-a)L_\star(\rho_\varphi)=\cL_{b-a}\big(\varphi(a),\varphi(b)\big).
\end{multline*}
Therefore $\cL(\varphi)=\cL_{b-a}\big(\varphi(a),\varphi(b)\big)$. As for the action of $\psi$, 
\[
\sum^m_{i=1}\cL_{t_i-t_{i-1}}\big(\psi(t_{i-1}),\psi(t_i)\big)\ge\cL_{b-a}\big(\psi(a), \psi(b)\big)=
\cL_{b-a}\big(\varphi(a), \varphi(b)\big)
\]
by the triangle inequality (10.4). Thus $\cL(\psi)\ge\cL_{b-a}\big(\varphi(a), \varphi(b)\big)=\cL(\varphi)$, q.e.d.
\end{proof}

\end{document}